\documentclass{amsart}

\usepackage{graphicx}
\usepackage{amscd,amsmath,amsopn,amssymb, amsthm,multicol} 
\usepackage{hyperref}
\usepackage[all]{xy}
\usepackage{amscd}
\usepackage{lscape}
\usepackage{slashed}
\usepackage{graphicx}
\usepackage{lscape}
 \usepackage{cancel}

\newtheorem{theorem}{Theorem}[section]
\newtheorem{lemma}[theorem]{Lemma}
\newtheorem{proposition}[theorem]{Proposition}
\newtheorem{corollary}[theorem]{Corollary}

\theoremstyle{definition}
\newtheorem{Definition}[theorem]{Definition}
\newtheorem{example}[theorem]{Example}

\theoremstyle{remark}
\newtheorem{remark}[theorem]{Remark}
\newtheorem{Question}[theorem]{Question}

\numberwithin{equation}{section}







 \input ulem.sty
 
\language=0
  \textwidth 160mm
  \textheight 235mm
 \setlength{\topmargin}{-0.5cm}
 \setlength{\oddsidemargin}{0cm}
 \setlength{\evensidemargin}{0cm}

\DeclareMathOperator{\Ad}{Ad}
\DeclareMathOperator{\ad}{ad}
\DeclareMathOperator{\Aut}{Aut}

\DeclareMathOperator{\Id}{Id}
\DeclareMathOperator{\tr}{tr}

\DeclareMathOperator{\Sym}{Sym}

\DeclareMathOperator{\Cl}{C\ell}
\DeclareMathOperator{\Ric}{Ric}
\DeclareMathOperator{\Sca}{Scal}
\DeclareMathOperator{\Hom}{Hom}
\DeclareMathOperator{\Cas}{Cas}

\DeclareMathOperator{\vol}{vol}
\DeclareMathOperator{\Span}{span}
\DeclareMathOperator{\Spin}{Spin}

\DeclareMathOperator{\Jac}{Jac}
\DeclareMathOperator{\Div}{div}
\newcommand{\fr}{\mathfrak}
\newcommand{\al}{\alpha}
\newcommand{\be}{\beta}

\newcommand{\bb}{\mathbb}
\newcommand{\cal}{\mathcal}

\DeclareMathOperator{\SO}{SO}
\DeclareMathOperator{\Sp}{Sp}
 \DeclareMathOperator{\SU}{SU}
\DeclareMathOperator{\U}{U}
\DeclareMathOperator{\G}{G}

\DeclareMathOperator{\Oo}{O}
\DeclareMathOperator{\E}{E}
\DeclareMathOperator{\Ss}{S}
\DeclareMathOperator{\Ed}{End}

\begin{document}   

 \title
{Invariant connections with skew-torsion and   $\nabla$-Einstein  manifolds}
\author{Ioannis Chrysikos}
 \address{Department of Mathematics and Statistics, Masaryk University, Brno  611 37, Czech Republic}
 \email{chrysikosi@math.muni.cz}




 

 \begin{abstract}
 For  a compact  connected Lie group $G$     we study the class of  bi-invariant affine connections whose geodesics through $e\in G$ are the 1-parameter subgroups.   We show  that  the bi-invariant affine connections which induce   derivations on the corresponding Lie algebra   $\fr{g}$  coincide  with the  bi-invariant metric connections.  Next we  describe the geometry of  a naturally reductive space  $(M=G/K, g)$  endowed with a family of  $G$-invariant connections $\nabla^{\al}$ whose torsion  is a multiple of the torsion of the canonical connection $\nabla^{c}$.  For the spheres $\Ss^{6}$ and $\Ss^{7}$ we prove that the space of $\G_2$  (resp. $\Spin(7)$)-invariant affine or metric connections consists of the family $\nabla^{\al}$.  Then we examine the ``constancy'' of the induced Ricci tensor $\Ric^{\al}$ and  prove that any  compact simply-connected isotropy irreducible  standard homogeneous  Riemannian manifold, which is not  a symmetric space of Type I,  is a $\nabla^{\al}$-Einstein manifold for any $\al\in\bb{R}$. We also provide examples of $\nabla^{\pm 1}$-Einstein structures for a class of  compact   homogeneous spaces $M=G/K$ with two isotropy summands.
 \end{abstract}
\maketitle  

    \section*{Introduction}
  Given a Riemannian manifold $(M^{n}, g)$,  metric connections whose torsion is a 3-form are   geometrically the connections which have the same geodesics  as the Levi-Civita
connection.   These connections play  a crucial role in the theory of non-integrable geometries and they admit physical applications   in type II string theory, see for example \cite{FrIv, Agr03, Srni}.    A very  remarkable example  is the so-called canonical connection $\nabla^{c}$ on a naturally reductive space.
   This article is a contribution to the geometry of  naturally reductive manifolds and  Lie groups  with respect to an invariant  metric  connection with skew-torsion.  Our approach   is fundamental and it mainly relies  on the homogeneous structure that such a manifold carries. 
     We begin by describing bi-invariant  affine connections on a compact connected   Lie group $G$. 
  Among the different bi-invariant   connections that one can consider on   $G$,   we are concerned  with those for which the Nomizu map $\Lambda : \fr{g}\to\fr{gl}(\fr{g})$ satisfies the property  $\Lambda(X)X=0$,   for any $X\in\fr{g}\cong T_{e}G$.  
  Although  any bi-invariant {\it metric} connection  has this property, the converse   is not necessarily true;  counterexamples are known for $G=\U(n)$ \cite{Laq1, AFH}.   It is therefore natural to ask what  conditions  we have to impose on the Nomizu map in order to establish a possible correspondence.    For this,   we propose a   formula which relates an equivariant derivation of  $\fr{g}$ with  the torsion and the curvature of   a bi-invariant connection on $G$, satisfying   $\Lambda(X)X=0$  $(X\in\fr{g})$. This  observation enables us to classify  the $\Ad(G)$-equivariant derivations $D  : \fr{g}\to{\rm Der}(\fr{g})$ carrying  the  property $D(X)X=0$. Then, we show that  the class of   bi-invariant  affine   connections  which induce derivations $\Lambda  : \fr{g}\to{\rm Der}(\fr{g})$ on the corresponding Lie algebra $\fr{g}$  coincides with the class of  bi-invariant metric  connections on $G$ (see Theorem   \ref{THM1}).

   After this description, a  natural step is the investigation of naturally reductive Riemannian manifolds  $(M=G/K, g)$ endowed with $G$-invariant connections    whose torsion  is a multiple of the torsion $T^{c}$  of the canonical connection $\nabla^{c}$, say $T^{\al}=\al\cdot T^{c}$ for some parameter $\al$.  
     For an    irreducible symmetric space $(M=G/K, g)$ of Type I  one can show that the space  of $G$-invariant {\it metric} connections consists only of the canonical connection $\nabla^{c}\equiv \nabla^{g}$  (see \cite[Thm.~2.1]{Laq2} and \cite[Rem.~3.2]{AFH}). 
   Here, we     primarily focus    on   symmetric spaces  which can be (re)presented as cosets of distinct  Lie groups, e.g. the spheres $\Ss^{6}$ and $\Ss^{7}$.  In terms of representation theory we show that  the space of $\Spin(7)$-invariant affine (or metric) connections on the 7-sphere $\Ss^{7}=\Spin(7)/\G_2$ is 1-dimensional; it consists of the family 
     $\{\nabla^{\al} : \al\in\bb{R}\}$ described above.   The same is true for the space of $\G_2$-invariant affine (or metric)  connections on the sphere $\Ss^{6}=\G_2/\SU(3)$, with the difference that    $\al\in\bb{C}$, i.e. there is a 2-dimensional family of $\G_2$-invariant affine   connections on $\Ss^{6}=\G_2/\SU(3)$, which has  the same geodesics with the Levi-Civita connection (see Theorem \ref{THM2}).    These invariant connections occur   since the cosets $\Spin(7)/\G_2$ and $\G_2/\SU(3)$, although diffeomorphic to  a symmetric space, do not provide us with symmetric pairs (see  \cite[7.107  Table 6]{Bes}).

 The rest of the article is a detailed study of   $\nabla$-Einstein structures with skew-torsion   on compact   naturally reductive spaces. Given a Riemannian manifold $(M^{n}, g)$ $(n\geq 3)$ equipped with a metric connection $\nabla$ with non-trivial skew-torsion $T\in\Lambda^{3}(T^{*}M)$, a {\it $\nabla$-Einstein structure with skew-torsion}, or in short  a {\it $\nabla$-Einstein structure}, is a  generalization of the Riemannian Einstein condition, given by  a tuple  $(M^{n}, g, \nabla, T)$   satisfying the equation $\Ric_{S}=(\Sca\cdot g)/n$.    Here, $\Ric_{S}$   denotes  the symmetric part of the Ricci tensor associated to $\nabla$  and $\Sca$ is  the corresponding scalar curvature. Solutions of this equation  naturally appear  in the context  of non-integrable geometries, where a metric connection different than the Levi-Civita connection is adapted to the geometry under consideration, the so-called characteristic connection \cite{FrIv, Srni}.   A variational principle has been recently deduced in \cite{AFer}, proving that $\nabla$-Einstein structures are optimal between the different metric connections with skew-torsion that one can define by choosing pairs  $(g, T)$  of Riemannian metrics and compatible totally skew-symmetric torsion tensors. In this paper,      we describe the $\nabla^{\al}$-Einstein equation on  {\it compact}  naturally reductive Riemannian manifolds  in terms of Casimir elements. 
     We prove that any  compact isotropy irreducible standard homogeneous Riemannian manifold $(M^{n}=G/K, g)$ of a compact connected semi-simple Lie group $G$, which is not a symmetric space of Type I,  is a $\nabla^{\al}$-Einstein manifold for {\it any} $\al\in\bb{R}$  
       (see Theorem \ref{THM4}).  
        Notice   that symmetric spaces of Type I are never $\nabla^{\al}$-Einstein (since $\nabla^{\al}\equiv\nabla^{c}\equiv\nabla^{g}$), in contrast to  symmetric spaces of Type II, i.e. compact simple  Lie groups which  are $\nabla^{\al}$-Einstein with parallel torsion for any $\al\in\bb{R}$, with  the well-known flat $\pm 1$-connections of Cartan-Schouten being the trivial members (see \cite[Lemma 1.8]{AFer} and Example \ref{LIEGROUP}). 
     
       In the final section  we extend our study on  compact        homogeneous Riemannian manifolds $M=G/K$ of a  compact  connected semi-simple Lie group $G$, whose isotropy representation decomposes into two non-trivial   irreducible and inequivalent $K$-submodules, such that
 \begin{equation}\label{incl}
 [\fr{k}, \fr{m}_{i}]\subset\fr{m}_{i},\quad     [\fr{m}_{1}, \fr{m}_{1}]\subset\fr{k}\oplus\fr{m}_{2}, \quad   [\fr{m}_{1}, \fr{m}_{2}]\subset\fr{m}_{1}, \quad  [\fr{m}_{2}, \fr{m}_{2}]\subset\fr{k}.
\end{equation}
 As a first step, we characterize the invariant  metric  connections on   $T(G/K)$ which   have skew-torsion with respect to a 1-parameter family of $G$-invariant metrics $\{g_{t} : t\in\bb{R}_{+}\}$. They  exist only for the Killing metric $t=1/2$, under the further assumption that the associated Nomizu map $\Lambda_{\fr{m}} : \fr{m}\to\fr{so}(\fr{m})$ satisfies $\Lambda_{\fr{m}}(X)X=0$, for any $X\in\fr{m}$ (see Theorem \ref{chato}). 
     Based on this  characterization,  we introduce a  new 2-parameter family of $G$-invariant metric connections, say $\nabla^{s, t}$ with $s\in\bb{R}$ and $t>0$, which  joins the connections $\nabla^{t}$ and $\nabla^{c}$,   and for the  Killing metric gives rise to a family of invariant connections with skew-torsion, namely $\{\nabla^{s, \frac{1}{2}} : s\in\bb{R}\}$.  For completeness,  we examine  the full algebraic type of the    torsion $T^{s, t}$; we show (even by a theory based on Dirac operators) that it does not contain any component of vectorial type. Then we describe   the $\nabla^{s, \frac{1}{2}}$-Einstein condition   in terms of the  Casimir eigenvalues $\Cas_1$ and $\Cas_2$.  We prove that $M=G/K$ is a $\nabla^{s}$-Einstein manifold with skew-torsion  for the values $s=0$ or $s=2$ if and only if    the Killing metric $g_{B}\equiv g_{1/2}$ is Einstein, i.e. $\Cas_1=\Cas_2$ (see Theorem \ref{THM5}).   
Finally, we use results of  a previous work \cite{Chry1}  related to  author's  PhD thesis to provide a series of examples  of  homogeneous spaces  carrying $\nabla^{s}$-Einstein structures with skew-torsion,   for  the values  $s=0, 2$. These are flag manifolds of a compact connected simple Lie group $G$ and they are the first known examples of {\it infinite families}  of {\it non-isotropy irreducible} homogeneous Riemannian manifolds, admitting  $\nabla$-Einstein structures with  skew-torsion.  

\smallskip
\noindent {\bf Acknowledgements:} This work  has been fully supported   
  by Masaryk University under the Grant Agency of Czech Republic, project no.14-2464P. The author warmly acknowledges  I.~Agricola, D.~V.~Alekseevsky,  A.~Arvanitoyeorgos, S.~Chiossi,  Th. Friedrich and Y.~Sakane for several discussions and remarks. 


\section{Homogeneous Riemannian manifolds and invariant connections}  \label{prel}
\subsection{$G$-invariant connections}   Consider a connected homogeneous  Riemannian manifold $(M=G/K, g)$,  where $G\subset I(M)$ is a  closed subgroup of the isometry group   and $K$ is the isotropy subgroup of some point of $M$.    Assume for simplicity that the transitive $G$-action is effective and $K$ is connected. We shall denote by $\fr{g}, \fr{k}$ the corresponding Lie algebras.  Because $K$ is compact, one can always fix an $\Ad(K)$-invariant  splitting $\fr{g}=\fr{k}\oplus\fr{m}$, i.e. $[\fr{k}, \fr{m}]\subset\fr{m}$. Then, $\fr{m}$ is identified with the tangent space $T_{o}M$ of $M$  $(o=eK\in M)$ and  the  isotropy  representation $\chi : K\to \SO(\fr{m})\subset\Aut(\fr{m})$   coincides with the restriction of the adjoint representation $\Ad|_{K}$ on $\fr{m}$, see \cite{Bes}.
Let us denote by  $\fr{aff}_{G}(P)$ the space of $G$-invariant affine connections on a homogeneous principal bundle $P\to G/K$ over $M=G/K$. Let also   ${\rm Hom}_{K}^{\bb{R}}(\fr{m}\otimes\fr{m}, \fr{m})$  be the space of $K$-intertwining maps $\fr{m}\otimes\fr{m}\to\fr{m}$.   By a theorem of H.~C.~Wang \cite{W} it is well-known that a linear  $G$-invariant  connection $\nabla : \Gamma(TM)\to\Gamma(T^{*}M\otimes TM)$    is described by a $\bb{R}$-linear map 
$\Lambda_{\fr{m}}  : \fr{m}\to\fr{gl}(\fr{m})$ which is equivariant under the isotropy representation, i.e. $\Lambda_{\fr{m}}(\Ad(k)X)=\Ad(k)\Lambda_{\fr{m}}(X)\Ad(k)^{-1}$ for any $ X\in\fr{m}$ and $k\in K$.  Writing  $\Lambda_{\fr{m}}(X)Y=\eta(X, Y)$ for some $\Ad(K)$-equivariant bilinear map  $\eta : \fr{m}\times\fr{m}\to\fr{m}$, i.e. $\eta(\Ad(k)X, \Ad(k)Y)=\Ad(k)\eta(X, Y)$ for any $X, Y\in\fr{m}$ and $k\in K$, one   can finally establish the identification (see \cite{Cap, Laq2})
 \[
 \fr{aff}_{G}\big(F(G/K)\big)\cong {\rm Hom}_{K}^{\bb{R}}(\fr{m}\otimes\fr{m}, \fr{m}).
 \]
 The linear map $\Lambda_{\fr{m}}$ is commonly referred to us as the {\it Nomizu map} (for details see \cite{A, Kob2})  and it nicely describes  the properties of $\nabla$.    For example,  pull back the Riemannian metric $g:=\langle \ , \ \rangle_{o}$ on $T_{o}M$ to an $\Ad(K)$-invariant inner product $\langle \ , \ \rangle$ on $\fr{m}$.  Then, $\nabla$ is metric, i.e.   $\Lambda_{\fr{m}}(X)$ lies in $\fr{so}(\fr{m})$ for any $X\in\fr{m}$  if and only if $\langle \Lambda_{\fr{m}}(X)Y, Z\rangle+\langle Y, \Lambda_{\fr{m}}(X)Z\rangle=0$ for any $X, Y, Z\in\fr{m}$.
  Furthermore, the torsion   and   curvature    are given by 
  \begin{equation}\label{torsion}
 \left.\begin{tabular}l
 $T(X, Y)_{o}=\Lambda_{\fr{m}}(X)Y-\Lambda_{\fr{m}}(Y)X-[X, Y]_{\fr{m}}$ \\
 $R(X, Y)_{o}= [\Lambda_{\fr{m}}(X), \Lambda_{\fr{m}}(Y)]-\Lambda_{\fr{m}}([X, Y]_{\fr{m}})-\ad([X, Y]_{\fr{k}})$  \\
 \end{tabular}\right\}.
    \end{equation} 
Viewing the torsion as a  $(3, 0)$-tensor  $T(X, Y, Z):=\langle T(X, Y), Z\rangle$  we will call $T$  the {\it torsion form}   if and only if it is skew-symmetric in $Y$ and $Z$ (and hence totally skew-symmetric).
    
 Recall   that  $M=G/K$ carries a distinguished invariant connection, the so-called  {\it canonical connection} $\nabla^{c}$. This is the unique $G$-invariant connection whose Nomizu map $\Lambda_{\fr{m}} : \fr{m}\to\fr{so}(\fr{m})$  is the zero map, i.e. $\Lambda_{\fr{m}}(X)=0$,  for any $X\in\fr{m}$  \cite{A, Kob2}.  The canonical connection   depends on  the choice of $\fr{m}\cong T_{o}M$, for example its torsion is given by  $T^{c}(X, Y)=-[X, Y]_{\fr{m}}$.  Moreover, any $G$-invariant tensor field is $\nabla^{c}$-parallel, in particular $\nabla^{c}T^{c}=0=\nabla^{c}R^{c}$.  A homogeneous Riemannian manifold  $(M=G/K, g)$ is called  {\it naturally reductive}  with respect to $G$ if and only if  the torsion of  $\nabla^{c}$ is a 3-form on $\fr{m}$, i.e.   $T^{c}(X, Y, Z):=\langle T^{c}(X, Y), Z\rangle\in\Lambda^{3}(\fr{m})$ for any $X, Y, Z\in\fr{m}$.  
          Geometrically, the  notion of  natural reductivity is equivalent to say that for each vector $X\in\fr{m}$ the orbit $\gamma(t):=\exp(tX)o$ is a geodesic on $M$, which means the Riemannian geodesics coincide with the $\nabla^{c}$-geodesics. 

\subsection{Bi-invariant connections}  A compact connected Lie group  $M=G$  with a bi-invariant metric $\rho$  can be viewed as a symmetric space of the form $(G\times G)/\Delta G$. The  Cartan decomposition is given by  $\fr{g}\oplus\fr{g}=\Delta\fr{g}\oplus\fr{p}$, where both $\Delta\fr{g}:=\{(X, X)\in\fr{g}\oplus\fr{g} : X\in\fr{g}\}$ and $\fr{p}:=\{(X, -X)\in\fr{g}\oplus\fr{g}  : X\in\fr{g}\}$  are isomorphic to $\fr{g}$, as $G$-modules. The isotropy representation is   the adjoint representation of $G$, i.e. $\chi(g, g)(X, -X):=(\Ad(g)X, -\Ad(g)X)$. Hence,  as a symmetric space, $G$ is isotropy irreducible  if and only if  $G$ is simple.  In this note we are interested in  {\it bi-invariant}   connections on $G$, i.e.   $(G\times G)$-invariant   affine connections.   Such a connection, say $\nabla^{\eta}$, is completely  described by a bilinear map  $\eta : \fr{g}\times\fr{g}\to\fr{g}$  such that $\eta(\Ad(g)X, \Ad(g)Y)=\Ad(g)\eta(X, Y)$, for any $g\in G$ and  $X, Y\in\fr{g}$    \cite{Laq1}.  The  associated Nomizu map $\Lambda^{\eta} : \fr{g}\to\Ed(\fr{g})$ is given by $\Lambda^{\eta}(X)Y:=\eta(X, Y)$ and the equivariant condition is expressed by $\Lambda^{\eta}(\Ad(g)X)=\Ad(g)\Lambda^{\eta}(X)\Ad(g)^{-1}$.   For a  (compact) {\it simple} Lie group $G$ there exists a 1-dimensional family of   {\it canonical connections}  which  joins the Levi-Civita connection with the   flat $\pm 1$-connections of Cartan-Schouten (see \cite[Rem.~6.1]{Olmos} or \cite[p.~18]{AFH}).  To be more precise,  it is induced from   the reductive decomposition $\fr{g}\oplus\fr{g}=\Delta\fr{g}\oplus\fr{p}_{\al}$, with  $\al\in\bb{R}$ and 
  \[
  \fr{p}_{\al}:=\{X_{\al}:=\big(\frac{\al+1}{2}X, \frac{\al-1}{2}X\big)\in\fr{g}\oplus\fr{g} : X\in\fr{g}\}\cong\fr{g},
  \]
for example. Then, one computes $[X_{\al}, Y_{\al}]_{\fr{p}_{\al}}
=\al\big(\frac{\al+1}{2}[X, Y], \frac{\al-1}{2}[X, Y]\big)$ and   hence the torsion of the induced connection     is given by $T^{\al}(X, Y):=-[X_{\al}, Y_{\al}]_{\fr{p}_{\al}}=-\al [X, Y]$, for any  $X, Y\in\fr{g}$. To summarise:
  
 \begin{theorem}\label{cc}
 On   a  compact connected simple Lie group $G\cong (G\times G)/\Delta G$  endowed with a bi-invariant metric $\rho$,      there exists a 1-dimensional family    of  bi-invariant {\it canonical}   connections, namely $\nabla^{\al}_{X}Y=\eta^{\al}(X, Y)=\frac{1-\al}{2}[X, Y]$ $(\al\in\bb{R})$  (up to   scale).  The   curvature has the form $R^{\al}(X, Y)Z=(1-\al^{2})[Z, [X, Y]]/4$ for any $X, Y, Z\in\fr{g}$.  Thus,   $(G, \rho)$ endowed with one of the connections $\nabla^{\pm 1}$ becomes    flat, i.e. $R^{\pm 1}\equiv 0$. Moreover, the   torsion $T^{\al}$ is $\nabla^{\al}$-parallel  for any $\al\in\bb{R}$  (by the Jacobi identity).
 \end{theorem}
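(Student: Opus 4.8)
The plan is to exploit the reductive decomposition $\fr{g}\oplus\fr{g}=\Delta\fr{g}\oplus\fr{p}_{\al}$ already set up, compute torsion and curvature there through the simplified forms of \eqref{torsion}, and then transport the answer to a genuine bi-invariant connection on $G$ under the isomorphism $\fr{p}_{\al}\cong\fr{g}$, $X_{\al}\mapsto X$. Since $\nabla^{\al}$ is by definition the canonical connection of $\fr{p}_{\al}$, its Nomizu map $\Lambda_{\fr{p}_{\al}}$ vanishes, so \eqref{torsion} collapses to $T=-[X_{\al},Y_{\al}]_{\fr{p}_{\al}}$ and $R=-\ad([X_{\al},Y_{\al}]_{\Delta\fr{g}})$. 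Everything thus reduces to resolving a single bracket into its two components.

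First I would record $[X_{\al},Y_{\al}]=\big(\tfrac{(\al+1)^{2}}{4}[X,Y],\tfrac{(\al-1)^{2}}{4}[X,Y]\big)$ and write it as $(Z,Z)+W_{\al}$ with $(Z,Z)\in\Delta\fr{g}$ and $W_{\al}\in\fr{p}_{\al}$. Differencing the two coordinate equations gives $W=\al[X,Y]$, which recovers the displayed identity for $[X_{\al},Y_{\al}]_{\fr{p}_{\al}}$ and hence the torsion $T^{\al}(X,Y)=-\al[X,Y]$; back-substitution yields the $\Delta\fr{g}$-part $Z=\tfrac{1-\al^{2}}{4}[X,Y]$.

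Next I would pass to the bi-invariant picture on $G$, where both torsion and curvature are given by \eqref{torsion} with $\fr{k}=0$ and $\Lambda^{\eta}(X)Y=\eta(X,Y)$. The torsion is then $\eta(X,Y)-\eta(Y,X)-[X,Y]$; feeding in the ansatz $\eta(X,Y)=c[X,Y]$ forces $(2c-1)[X,Y]=-\al[X,Y]$, i.e. $c=\tfrac{1-\al}{2}$, the asserted Nomizu map $\eta^{\al}$. The curvature reads $R(X,Y)Z=\eta(X,\eta(Y,Z))-\eta(Y,\eta(X,Z))-\eta([X,Y],Z)$; the double-bracket terms combine by the Jacobi identity and the whole expression reduces to $(c^{2}-c)[[X,Y],Z]$, so that $c^{2}-c=\tfrac{\al^{2}-1}{4}$ turns this into $\tfrac{1-\al^{2}}{4}[Z,[X,Y]]$. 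As a consistency check this must coincide with $-\ad((Z,Z))$ acting on $\fr{p}_{\al}\cong\fr{g}$, namely $-\ad\big(\tfrac{1-\al^{2}}{4}[X,Y]\big)$; and setting $\al=\pm 1$ annihilates the prefactor, giving $R^{\pm 1}\equiv 0$.

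Finally, for the $\nabla^{\al}$-parallelism of $T^{\al}$ I would evaluate $(\nabla^{\al}_{X}T^{\al})(Y,Z)$ on left-invariant fields using $\nabla^{\al}_{X}W=\tfrac{1-\al}{2}[X,W]$. Factoring out the common $-\al\tfrac{1-\al}{2}$, the remaining expression is precisely $[X,[Y,Z]]-[[X,Y],Z]-[Y,[X,Z]]$, which vanishes by the Jacobi identity. The one point demanding care, and the main bookkeeping obstacle, is keeping the coset and the direct left-invariant descriptions aligned: one must verify that $X_{\al}\leftrightarrow X$ intertwines $\Ad(\Delta G)$ on $\fr{p}_{\al}$ with $\Ad(G)$ on $\fr{g}$, so that $\ad((Z,Z))|_{\fr{p}_{\al}}$ really becomes $\ad(Z)$, and that the torsion and curvature computed in the two pictures agree under this identification.
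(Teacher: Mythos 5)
Your proposal is correct and follows essentially the same route as the paper: the family is obtained from the reductive decompositions $\fr{g}\oplus\fr{g}=\Delta\fr{g}\oplus\fr{p}_{\al}$, the $\fr{p}_{\al}$-component of $[X_{\al},Y_{\al}]$ gives the torsion $-\al[X,Y]$ (and the $\Delta\fr{g}$-component $\tfrac{1-\al^{2}}{4}[X,Y]$ the curvature), and the identification with $\eta^{\al}(X,Y)=\tfrac{1-\al}{2}[X,Y]$ plus the Jacobi identity yields $R^{\al}$ and $\nabla^{\al}T^{\al}=0$. You merely write out the curvature and parallelism computations that the paper leaves implicit.
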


  \section{Metric bi-invariant  connections and derivations}\label{CLieG}
    We    recall     the classification of {\it metric} bi-invariant connections on a {\it compact} Lie group $G$ by \cite{AFH}.  For the sake of completeness, and since we will use this result, we explain the main idea of the proof (adapted in our notation).  This is    essentially based on the classification of bi-invariant affine connections given in  \cite{Laq1}. 
    \begin{theorem}\textnormal{(\cite[Thm.~3.1]{AFH})}\label{AFHoll}
Consider  a compact connected Lie group  $G$   with a bi-invariant metric $\rho$ and   let $\fr{g}=\fr{g}_{0}\oplus\fr{g}_{1}\oplus\cdots\oplus\fr{g}_{r}$ be   the decomposition of the Lie algebra $\fr{g}=T_{e}G$ into its centre $\fr{g}_{0}$ and simple ideals $\fr{g}_{i}$ $(1\leq i\leq  r)$.  Then, a  bi-invariant metric connection on $G$   is given by  (up to scale)   \begin{equation}\label{thec}
  \nabla^{\al}_{X}Y:=\eta^{\al}(X, Y)=\sum_{1\leq i\leq r}((1-\al_{i})/2) \cdot  [X, Y]_{\fr{g}_{i}},  
    \end{equation}
for any $X, Y\in\fr{g}$, where $\al:=(\al_1, \ldots, \al_{r})\in\bb{R}^{r}$.
The torsion and the curvature of this $r$-parameter family are   $T^{\al}(X, Y)=-\sum_{1\leq i\leq r}\al_{i}\cdot [X, Y]_{\fr{g}_{i}}$ and $R^{\al}(X, Y)Z=\sum_{1\leq i\leq r}((1-\al_{i}^{2})/4)\cdot [Z, [X, Y]_{\fr{g}_{i}}]_{\fr{g}_{i}}$, respectively.
  \end{theorem}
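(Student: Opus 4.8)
The plan is to characterize bi-invariant metric connections via their Nomizu maps and then reduce everything to the structure theory of compact Lie algebras. By the theory recalled in the text, a bi-invariant connection $\nabla^{\eta}$ is given by an $\Ad(G)$-equivariant bilinear map $\eta:\fr{g}\times\fr{g}\to\fr{g}$, equivalently a Nomizu map $\Lambda^{\eta}:\fr{g}\to\Ed(\fr{g})$ with $\Lambda^{\eta}(\Ad(g)X)=\Ad(g)\Lambda^{\eta}(X)\Ad(g)^{-1}$. The metric condition $\nabla\rho=0$ is precisely $\Lambda^{\eta}(X)\in\fr{so}(\fr{g})$ for all $X$, i.e. $\rho(\eta(X,Y),Z)+\rho(Y,\eta(X,Z))=0$. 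So I would start from the general classification of bi-invariant affine connections of \cite{Laq1} and impose skew-symmetry.

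First I would exploit equivariance. Differentiating the relation $\Lambda^{\eta}(\Ad(g)X)=\Ad(g)\Lambda^{\eta}(X)\Ad(g)^{-1}$ at $g=e$ in the direction of $W\in\fr{g}$ yields the infinitesimal equivariance $\Lambda^{\eta}(\ad(W)X)=[\ad(W),\Lambda^{\eta}(X)]$, so that $\eta$ is an intertwiner of $\fr{g}$-modules. I would then use the decomposition $\fr{g}=\fr{g}_0\oplus\fr{g}_1\oplus\cdots\oplus\fr{g}_r$ into centre and simple ideals. By Schur's lemma applied to the adjoint representation (the simple summands carry inequivalent irreducible $\fr{g}$-modules, and the centre is acted on trivially), the equivariant map $\eta:\fr{g}\otimes\fr{g}\to\fr{g}$ must respect this block structure: its image in the factor $\fr{g}_i$ can only depend on the arguments through operations tied to $\fr{g}_i$. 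Concretely, each irreducible simple ideal $\fr{g}_i$, viewed as an adjoint module, appears with multiplicity one in $\fr{g}$ (for $i\geq 1$), so $\Hom_G(\fr{g},\fr{g})$ and the relevant equivariant bilinear maps are pinned down up to scalars on each block. The key structural input is that for a simple Lie algebra the only $\ad$-equivariant bilinear maps $\fr{g}_i\times\fr{g}_i\to\fr{g}_i$ are scalar multiples of the bracket $[\,\cdot\,,\cdot\,]_{\fr{g}_i}$ (together with the symmetric piece coming from the Cartan–Killing data, which is excluded once metricity is imposed).

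Next I would impose metricity. Writing $\eta=\eta_S+\eta_A$ as the sum of its symmetric and antisymmetric parts in the two arguments, the metric condition forces the "parallel" (symmetric) part to vanish and leaves only the bracket-type antisymmetric contributions. More precisely, on the centre $\fr{g}_0$ the bracket is zero, and any equivariant (here just linear, since the centre acts trivially) metric contribution must be skew; but a skew contribution built from a trivial module collapses, so $\eta$ has no $\fr{g}_0$-component — this is why the sum in \eqref{thec} runs only over $1\leq i\leq r$. On each simple ideal, the general equivariant map is a combination $\lambda_i[X,Y]_{\fr{g}_i}+\mu_i\,\rho(X,Y)^{\sharp}$ of the bracket and the Killing-form pairing; metricity (skew-symmetry of $\Lambda^{\eta}(X)$) kills the symmetric Killing term $\mu_i$ and is automatically satisfied by the bracket term because $\rho([X,Y]_{\fr{g}_i},Z)$ is totally skew by $\Ad$-invariance of $\rho$. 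Writing the surviving scalar on $\fr{g}_i$ as $(1-\al_i)/2$ gives exactly the form \eqref{thec}.

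Finally I would compute the torsion and curvature directly from the bi-invariant formulas (the analogue of \eqref{torsion} with $\fr{k}=0$ and $\Lambda_{\fr{m}}=\Lambda^{\eta}$): since $\eta$ is skew-symmetric on each block, $T^{\al}(X,Y)=\eta(X,Y)-\eta(Y,X)-[X,Y]=\sum_i\big((1-\al_i)[X,Y]_{\fr{g}_i}\big)-[X,Y]=-\sum_i\al_i[X,Y]_{\fr{g}_i}$, and the curvature $R^{\al}(X,Y)Z=[\Lambda^{\eta}(X),\Lambda^{\eta}(Y)]Z-\Lambda^{\eta}([X,Y])Z$ follows by a Jacobi-identity computation on each simple factor, yielding the stated $\sum_i((1-\al_i^2)/4)[Z,[X,Y]_{\fr{g}_i}]_{\fr{g}_i}$. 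I expect the main obstacle to be the module-theoretic bookkeeping: one must verify that no "cross terms" mixing distinct ideals survive equivariance, and that the symmetric Killing-form term is the only symmetric equivariant bilinear map so that metricity really does reduce the $r+1$ blocks of affine freedom to the $r$ bracket-scalars $\al_i$. This is where Schur's lemma for the (possibly non-self-dual, but here self-dual compact) simple summands, together with the multiplicity-one feature of the adjoint module, does the decisive work.
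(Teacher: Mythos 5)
Your overall strategy --- reduce to the classification of $\Ad(G)$-equivariant bilinear maps $\eta:\fr{g}\times\fr{g}\to\fr{g}$ and then impose $\Lambda^{\eta}(X)\in\fr{so}(\fr{g})$ --- is the same as the paper's, and your final torsion and curvature computations are routine and correct. The gap is in the classification step, which you present as a Schur's lemma exercise; it is not, and two of your specific claims are false. First, equivariance does \emph{not} kill all cross terms between the blocks: the centre $\fr{g}_{0}$ is a trivial module, the trivial module occurs in $S^{2}(\fr{g}_{i})$ via the Killing form of $\fr{g}_{i}$, and $\fr{g}_{0}\otimes\fr{g}_{i}$ contains copies of $\fr{g}_{i}$, so maps such as $(X,Y)\mapsto B_{\fr{g}_i}(X,Y)\cdot Z_{0}$ (with $Z_{0}\in\fr{g}_{0}$ fixed) and $(Z,X)\mapsto \ell(Z)\,X$ (with $\ell\in\fr{g}_{0}^{*}$) are perfectly equivariant. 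These are precisely the extra families $\eta_{2},\eta_{3},\mu$ on $\fr{u}(n)$ in Laquer's list, and they must be excluded by checking metricity explicitly, not by Schur. Second, your description of the symmetric equivariant maps on a simple ideal as ``the Killing-form pairing $\rho(X,Y)^{\sharp}$'' is ill-defined: $\rho(X,Y)$ is a scalar and a simple ideal contains no nonzero invariant vector, so no such map into $\fr{g}_{i}$ exists. The actual content of Laquer's theorem, which is the decisive input here, is that the multiplicity of $\fr{g}$ in $S^{2}(\fr{g})$ is zero for every compact simple $\fr{g}$ \emph{except} $\fr{su}(n)$, $n\geq 3$, where the Jordan-type product $\eta^{\rm exc}(X,Y)=i(XY+YX-(2/n)\tr(XY)\cdot I)$ gives multiplicity one; this is a case-by-case computation and cannot be replaced by the multiplicity-one statement for the adjoint module in $\Lambda^{2}(\fr{g})$.

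Moreover, ``metricity kills the symmetric part'' needs an argument. It is true that a trilinear form symmetric in its first two slots and skew in its last two vanishes, so a \emph{purely} symmetric metric $\eta$ is zero; but for a mixed $\eta=\eta_{S}+\eta_{A}$ the metric condition does not decouple into conditions on $\eta_{S}$ and $\eta_{A}$ unless one already knows that every surviving antisymmetric equivariant map is metric --- and this fails for the skew map $\mu(X,Y)=i(\tr(Y)X-\tr(X)Y)$ on $\fr{u}(n)$, which is equivariant and satisfies $\mu(X,X)=0$ yet does not define a metric connection. The paper closes these gaps by quoting Laquer's full classification of $\Hom_{G}^{\bb{R}}(\fr{g}\otimes\fr{g},\fr{g})$ together with the verification in \cite{AFH} that each exceptional family ($\eta^{\rm exc}$ for $\SU(n)$ and $\eta_{1},\eta_{2},\eta_{3},\mu$ for $\U(n)$) fails to be metric; your proposal needs the same inputs to be complete.
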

 \begin{proof} 
Consider first a bilinear $\Ad(G)$-equivariant map $\eta : \fr{g}\times\fr{g}\to\fr{g}$ corresponding to  a bi-invariant metric connection   $\nabla$ on $G$. Since $\nabla$ is metric with respect to $\rho$, $\eta$ is skew-symmetric with respect to the induced $\Ad(G)$-invariant inner product $\langle \ , \ \rangle$, i.e.   $\eta_{X}:=\Lambda(X)\in\fr{so}(\fr{g})$, for any $X\in\fr{g}$.   Thus,  
a bi-invariant metric connection $\nabla$ has skew-torsion $T\in \Lambda^{3}(\fr{g})$ if and only if $\eta(X, X)=\Lambda(X)X=0$ for any $X\in\fr{g}$  
 and this corrects a small   error in \cite[Lem.~3.1]{AFH}, see also \cite[Lem.~2.1]{Agr03}.
   Obviously,  a connection  induced by the adjoint representation of $\fr{g}$  verifies  this condition.  Hence, the most interesting part of the proof is that of uniqueness. We breake the argument up into  two steps.

\noindent {\it  1st Step:}  We begin with the additional assumption that  $G$ is  simple.  By Theorem \ref{cc} we know that the bilinear map    $\lambda(X, Y)=(1-\al)[X, Y]/2$  defines   a 1-dimensional family of bi-invariant metric connections on  $G$ with  torsion  $T^{\al}(X, Y)=-\al[X, Y]$,  for any $X, Y\in\fr{g}$.  We need now to show that this is the unique family (up to scale).  
The space of   bi-invariant affine connections on $G$  is isomorphic to the space $\Hom_{G}^{\bb{R}}(\fr{g}\otimes\fr{g}, \fr{g})$. 
Since $\fr{g}$ is  irreducible (and of real type),  it is sufficient to compute    the multiplicity of $\fr{g}$ inside $\fr{g}\otimes\fr{g}={\rm S}^{2}(\fr{g})\oplus \Lambda^{2}(\fr{g})$. 
In \cite{Laq1}  H.~T.~Laquer confirms  that  for any compact simple Lie group the multiplicity of $\fr{g}$ in $\Lambda^{2}(\fr{g})$ is one and only for $\SU(n)$ ($n\geq 3$) there is     a {\it new} copy of $\fr{g}$  inside ${\rm S}^{2}(\fr{g})$ with the same  multiplicity. Thus, for $G$ simple with $G\ncong\SU(n)$ $(n\geq 3)$,     the   unique family of  bi-invariant affine connections  is determined  by the bilinear map $\lambda$ (up to  scale).  For $\SU(n)$ the ``exceptional'' family corresponds to the symmetric bilinear map $\eta^{\rm exc}(X, Y)=i(XY+YX-(2/n){\rm tr}(XY)\cdot I)$, where $I$ is the $n\times n$ identity matrix.  However,  the induced affine   connection  is not  metric with respect  to a bi-invariant metric, e.g. the negative of the Killing form \cite{AFH}. This proves the claim.

\noindent {\it  2nd Step:}   Let us  drop now the latter condition and explain   the more general case of a compact   Lie group $G$.  Consider the decomposition of the corresponding Lie algebra $\fr{g}=T_{e}G$ into its centre and simple ideals $\fr{g}=\fr{g}_{0}\oplus\fr{g}_{1}\oplus\cdots\oplus\fr{g}_{r}$ and   write $X=X_{0}+X_{1}+\cdots+X_{r}$.  
 For any simple ideal  $\fr{g}_{i}$ one can apply the method described in the first step, by using  the bi-invariant connection $\nabla^{\al_{i}}$ induced by the bilinear map  $\eta^{\al_{i}}(X, Y)=((1-\al_{i})/2) \cdot  [X, Y]_{\fr{g}_{i}}$  for some $\al_{i}\in\bb{R}$, where $[X, Y]_{\fr{g}_{i}}:=[X_{i}, Y_{i}]$.  Obviously, $\nabla^{\al_{i}}$ is metric  with respect to the restriction $\langle \ , \ \rangle|_{\fr{g}_{i}}=x_{i}\rho|_{\fr{g}_{i}}$,  where  $x_{i}$ are real positive numbers for any $i=1, \ldots, r$.    Consider now some scalar product $b$ on the centre $\fr{g}_{0}$ and notice that $\eta^{\al_{0}}\equiv 0$.  The $\Ad(G)$-invariant scalar product $\langle \ ,  \ \rangle$ can be expressed by $\langle \ , \ \rangle = b|_{\fr{g}_{0}}+x_{1}\rho|_{\fr{g}_{1}}+\cdots +x_{r}\rho|_{\fr{g}_{r}}$ for some $x_{i}\in\bb{R}_{+}$. 
Hence, it is not difficult to see that the map defined by $\eta^{\al}(X, Y):=\sum_{i=1}^{r}\eta^{\al_{i}}(X, Y)$ with $\al:=(\al_1, \ldots, \al_{r})$, induces  a family of bi-invariant  connections on $G$  which are metric with respect to $\langle \ ,  \ \rangle$.  The  associated torsion is given by $T^{\al}(X, Y)=-\sum_{i=1}^{r}\al_{i}\cdot [X, Y]_{\fr{g}_{i}}$ and due to the $\ad(\fr{g})$-invariance of $\langle \ , \ \rangle$, the induced 3-tensor is   a 3-form on $\fr{g}$.  On the other hand, by \cite{Laq1} it is known that  besides $\SU(n)$ $(n\geq 3)$, only for $\U(n)$   $(n\geq 2)$  one can construct  affine  bi-invariant connections corresponding to $\Ad(\U(n))$-equivariant bilinear maps different from  the   Lie bracket (for details see   \cite[Thm. 9.1]{Laq1} and Proposition \ref{CLG}).  However,  as for $\SU(n)$, in \cite[Thm 3.1]{AFH} it was explained that the induced connections fail to carry the metric property.    Using now   \cite[Thm. 9.1]{Laq1}, we conlcude that for an  arbitrary compact Lie  group $G$  a  bi-invariant metric connection necessarily  corresponds to a copy of $\fr{g}$  inside $\Lambda^{2}(\fr{g})$,  and this is given by (\ref{thec}) (up to  scale).  
  \end{proof}
\begin{corollary}\label{onedir}
Any bi-invariant metric connection $\nabla$ on  a compact connected Lie group $G$ endowed with a bi-invariant metric, has (totally) skew-symmetric torsion $T\in\Lambda^{3}(\fr{g})$.
\end{corollary}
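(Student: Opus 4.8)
The plan is to derive Corollary \ref{onedir} as a direct consequence of Theorem \ref{AFHoll}. Indeed, Theorem \ref{AFHoll} asserts that every bi-invariant metric connection $\nabla$ on a compact connected Lie group $G$ (endowed with a bi-invariant metric $\rho$) arises, up to scale, from a Nomizu map of the form $\eta^{\al}(X, Y)=\sum_{1\leq i\leq r}((1-\al_{i})/2)\cdot [X, Y]_{\fr{g}_{i}}$ for some $\al=(\al_1, \ldots, \al_r)\in\bb{R}^{r}$, with associated torsion $T^{\al}(X, Y)=-\sum_{1\leq i\leq r}\al_i\cdot [X, Y]_{\fr{g}_{i}}$. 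Thus it suffices to verify that this particular torsion tensor, viewed as a $(3,0)$-tensor via $T^{\al}(X, Y, Z):=\langle T^{\al}(X, Y), Z\rangle$, is totally skew-symmetric.

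First I would recall that the $\Ad(G)$-invariant inner product $\langle\ ,\ \rangle$ decomposes, as established in the second step of the proof of Theorem \ref{AFHoll}, as a sum $\langle\ ,\ \rangle=b|_{\fr{g}_0}+x_1\rho|_{\fr{g}_1}+\cdots+x_r\rho|_{\fr{g}_r}$ where each restriction to a simple ideal $\fr{g}_i$ is a positive multiple of the Killing form (equivalently of $\rho$). The key point is the $\ad$-invariance of each restriction: for every simple ideal one has $\langle [X, Y]_{\fr{g}_i}, Z\rangle = -\langle Y, [X, Z]_{\fr{g}_i}\rangle$, which follows from the $\ad(\fr{g})$-invariance of the bi-invariant metric. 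From this, the $(3,0)$-tensor $X, Y, Z\mapsto\langle [X, Y]_{\fr{g}_i}, Z\rangle$ is already totally skew-symmetric on each $\fr{g}_i$: skew-symmetry in $X$ and $Y$ is immediate from the antisymmetry of the Lie bracket, while skew-symmetry in $Y$ and $Z$ is exactly the $\ad$-invariance displayed above. Summing over $i$ and multiplying by the scalars $-\al_i$ preserves total skew-symmetry, so $T^{\al}(X, Y, Z)\in\Lambda^{3}(\fr{g})$.

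Alternatively, and perhaps more transparently, I would invoke the observation made at the very beginning of the proof of Theorem \ref{AFHoll}: a bi-invariant metric connection has skew-torsion if and only if $\eta(X, X)=\Lambda(X)X=0$ for every $X\in\fr{g}$. Since every bi-invariant metric connection is classified by $\eta^{\al}$ with $\eta^{\al}(X, X)=\sum_i((1-\al_i)/2)\cdot [X, X]_{\fr{g}_i}=0$ (because $[X, X]=0$), the criterion is automatically satisfied, and hence $T\in\Lambda^3(\fr{g})$.

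Since both arguments are essentially bookkeeping once Theorem \ref{AFHoll} is in hand, there is no genuine obstacle here; the corollary is an immediate packaging of facts already proved. The only point requiring minimal care is confirming that the scalar rescaling by distinct positive constants $x_i$ on distinct simple ideals does not spoil the skew-symmetry of the torsion form — but because the decomposition $\fr{g}=\fr{g}_0\oplus\fr{g}_1\oplus\cdots\oplus\fr{g}_r$ is orthogonal with respect to $\langle\ ,\ \rangle$ and each $[\,\cdot\,,\,\cdot\,]_{\fr{g}_i}$ lands in $\fr{g}_i$, the cross-terms between different ideals vanish identically, and total skew-symmetry is preserved summand by summand.
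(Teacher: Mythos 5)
Your proposal is correct and follows the same route as the paper: the corollary is stated there without a separate proof precisely because it is an immediate consequence of Theorem \ref{AFHoll}, whose proof already records that the torsion $T^{\al}(X,Y)=-\sum_i\al_i[X,Y]_{\fr{g}_i}$ induces a 3-form due to the $\ad(\fr{g})$-invariance of $\langle\ ,\ \rangle$ (equivalently, via the criterion $\eta(X,X)=0$). Both of your arguments match the bookkeeping the paper intends.
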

 
    \begin{Question}\label{q1}
 Let $G$ be a compact connected Lie group with Lie algebra $\fr{g}$.  Given    an arbitrary  bi-invariant {\it affine} connection $\nabla$  whose Nomizu map $\Lambda : \fr{g}\to\Ed(\fr{g})$ satisfies  the equation
  \begin{equation}\label{stc}
   \Lambda(X)X=0, \quad \forall \ X\in\fr{g},
   \end{equation}
   is it  true that $\nabla$ is metric with respect to a bi-invariant metric?  In other words, are the conditions $\Lambda(X)X=0$ and $\Lambda(X)\in\fr{so}(\fr{g})$ equivalent  for {\it any} bi-invariant {\it affine} connection on $G$?
 \end{Question}
\noindent A bi-invariant connection  satisfying   (\ref{stc})   has as geodesics orbits of  the one-parameter subgroups of $G$ (in the simple case the same geodesics   with the 1-parameter family of  canonical connections on $G$,  see \cite[Prop.~2.9, Ch.~X]{Kob2}).  Hence, as we explained before, if  $\nabla$  is metric with respect to a bi-invariant metric on $G$  then its torsion must be a 3-form on $\fr{g}$, i.e. $\Lambda(X)X=0$.  However, the ``converse'' is not true, i.e.  the  previous question admits a {\it negative} answer with counterexamples appearing  for $\U(n)$ (see  \cite{Laq1, AFH} and for   details the proof of  Proposition \ref{CLG}). Hence,  we ask:  
 \begin{Question}\label{q2}
 Which  further conditions do we have to impose on the Nomizu map   $\Lambda : \fr{g}\to\Ed(\fr{g})$ of a    bi-invariant {\it affine} connection on $G$ satisfying  (\ref{stc})  in order to be metric with respect to a bi-invariant metric? In  other words,  which subclass of bi-invariant affine connections satisfying (\ref{stc}) can be identified with the   class of   bi-invariant {\it metric}  connections on $G$?
 \end{Question}
\noindent     Our  answer relates  the {\it flat connections} of this type, which coincide with the    $\pm 1$-connections   discussed in Theorem \ref{cc}.  We   should emphasize once more that here we drop the condition that $G$ is simple.
    \begin{lemma}\label{REPRES}
 Let $G$  be a compact connected Lie group and let $\nabla$ be a    bi-invariant affine  connection with $\Lambda(X)X=0$, for any $X\in\fr{g}$. Then the following are equivalent:
 \begin{enumerate}
 \item[$(a)$] $\nabla$ is flat  $R\equiv0$, i.e. $\Lambda(X) : \fr{g}\to\fr{g}$ is a representation  for any $X\in\fr{g}$.
\item[$(b)$]  $\Lambda(X)=\ad(X)$,   or $\Lambda(X)=0$ for any $X\in\fr{g}$, and these  are the unique   bi-invariant linear connections which satisfy $(a)$.
\end{enumerate}
  \end{lemma}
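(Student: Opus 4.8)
The plan is to extract flatness directly from the curvature formula and then pin down $\Lambda$ representation-theoretically. Viewing $G=G/\{e\}$ as a (trivially reductive) homogeneous space with $\fr{m}=\fr{g}$ and $\fr{k}=0$, the second line of (\ref{torsion}) specializes to $R(X,Y)=[\Lambda(X),\Lambda(Y)]-\Lambda([X,Y])$, the $\ad([X,Y]_{\fr{k}})$ term being absent. Hence $R\equiv 0$ is equivalent to $\Lambda([X,Y])=[\Lambda(X),\Lambda(Y)]$ for all $X,Y\in\fr{g}$, i.e. to $\Lambda\colon\fr{g}\to\fr{gl}(\fr{g})$ being a Lie algebra homomorphism (a representation of $\fr{g}$ on itself); this is the reformulation in $(a)$, which I would record first. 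The implication $(b)\Rightarrow(a)$ is then immediate: $\Lambda=0$ is trivially a representation and $\Lambda=\ad$ is one by the Jacobi identity, while both obey $\Lambda(X)X=[X,X]=0$, so both are legitimate connections of the required type.

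The content lies in $(a)\Rightarrow(b)$. First I would polarize the standing hypothesis (\ref{stc}): $\Lambda(X)X=0$ for all $X$ is equivalent to $\Lambda(X)Y=-\Lambda(Y)X$, so the associated bilinear map is skew and, together with $\Ad(G)$-equivariance, $\Lambda$ represents a class in $\Hom_{G}^{\bb{R}}(\Lambda^{2}\fr{g},\fr{g})$. For $G$ simple I would then invoke the multiplicity-one fact already used in the proof of Theorem \ref{AFHoll} (Laquer): the only $\Ad(G)$-equivariant skew map $\fr{g}\times\fr{g}\to\fr{g}$ is, up to scale, the Lie bracket, whence $\Lambda(X)=c\cdot\ad(X)$ for a single $c\in\bb{R}$. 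Substituting into the homomorphism condition from the first step yields $c\,\ad([X,Y])=c^{2}\,\ad([X,Y])$ for all $X,Y$; since $[\fr{g},\fr{g}]=\fr{g}$ and $\ad$ is faithful, this forces $c^{2}=c$, i.e. $c\in\{0,1\}$. These two values return exactly $\Lambda=0$ and $\Lambda=\ad$, which are the flat members $\nabla^{+1}$ and $\nabla^{-1}$ of the canonical family of Theorem \ref{cc}, and the uniqueness assertion in $(b)$ follows.

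For a general compact $G$ I would pass to the decomposition $\fr{g}=\fr{g}_{0}\oplus\fr{g}_{1}\oplus\cdots\oplus\fr{g}_{r}$ into centre and simple ideals and run the same analysis on each ideal, using the relations $[\Lambda(X_{i}),\Lambda(X_{j})]=\Lambda([X_{i},X_{j}])=0$ for $i\neq j$ together with equivariance under each simple factor to suppress the cross-terms. I expect the genuinely delicate point to be the centre $\fr{g}_{0}$: there $\ad\equiv 0$ and $\Ad(G)$-equivariance is vacuous, so only flatness, i.e. the commutativity $[\Lambda(X),\Lambda(Y)]=0$ for central $X,Y$, is available to enforce rigidity. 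This is the step I would scrutinize most carefully when comparing the outcome with the clean dichotomy $\Lambda\in\{0,\ad\}$ that holds verbatim for simple $G$ (the $\nabla^{\pm 1}$ of Theorem \ref{cc}); the reduction to the simple case via the ideal splitting is routine, but isolating exactly the contributions of the centre is where the argument must be made precise.
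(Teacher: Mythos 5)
Your argument for $(b)\Rightarrow(a)$ and your reformulation of flatness as $\Lambda$ being a Lie algebra homomorphism agree with the paper. For $G$ simple your route through $(a)\Rightarrow(b)$ is genuinely different from the paper's and is sound: you use Laquer's multiplicity-one statement for $\fr{g}$ inside $\Lambda^{2}(\fr{g})$ to write $\Lambda=c\cdot\ad$ and then extract $c^{2}=c$ from the homomorphism condition. The paper instead proceeds by a short direct computation valid for \emph{any} compact connected $G$: polarizing (\ref{stc}) gives $\Lambda(X)Y=-\Lambda(Y)X$, hence $\Lambda(Y)[X,Y]=-\Lambda([X,Y])Y$, and substituting $Z=Y$ into $R(X,Y)Z=0$ yields $\Lambda(Y)\big(\Lambda(X)Y-[X,Y]\big)=0$, from which the dichotomy $\Lambda=\ad$ or $\Lambda=0$ is read off with no decomposition of $\fr{g}$ and no appeal to Laquer's classification.

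The genuine gap in your proposal is the general compact case, and you have in fact flagged it yourself without closing it. Two things go wrong with the reduction you sketch. First, the cross terms between the centre and the semisimple part are \emph{not} suppressed by equivariance: $\Hom_{G}(\fr{g}_{0}\otimes\fr{g}_{i},\fr{g}_{i})\cong\fr{g}_{0}^{*}\otimes\Hom_{G}(\fr{g}_{i},\fr{g}_{i})\neq 0$, and concretely on $\fr{u}(n)$ the map $\mu(X,Y)=i\big(\tr(Y)X-\tr(X)Y\big)$ is $\Ad(\U(n))$-equivariant, skew, satisfies $\mu(X,X)=0$, and is not a multiple of the Lie bracket; so the multiplicity-one input you rely on fails outside the simple case and candidates like $c_{1}\ad+c_{2}\mu$ must be excluded by an explicit curvature computation you do not carry out. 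Second, on the centre itself equivariance is vacuous, so your representation-theoretic mechanism gives no control at all there, and you offer no replacement argument showing $\Lambda|_{\fr{g}_{0}}=0$. Since the lemma is stated for arbitrary compact connected $G$ (the paper stresses that simplicity is dropped precisely here, and the subsequent Proposition on equivariant derivations hinges on the central case), the proof as proposed establishes the statement only for $G$ simple. The fix is either to supply the missing analysis of the centre and of the $\fr{g}_{0}$--$\fr{g}_{i}$ cross terms, or to abandon the decomposition altogether in favour of the paper's pointwise identity $\Lambda(Y)\big(\Lambda(X)Y-[X,Y]\big)=0$.
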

  \begin{proof}
  By definition, $R(X, Y)=[\Lambda(X), \Lambda(Y)]-\Lambda([X, Y])$ and thus $\nabla$ is flat $R\equiv0$  if an only if    $\Lambda : \fr{g}\to\fr{gl}(\fr{g})$ is a representation (for example, the Riemannian connection  does not induce  a representation).   Assume   that $R\equiv 0$, i.e.  $\Lambda(X)\Lambda(Y)Z-\Lambda(Y)\Lambda(X)Z-\Lambda([X, Y])Z=0$, for any $X, Y, Z\in\fr{g}$.  By polarization, equation (\ref{stc}) is equivalent to    $\Lambda(X)Y+\Lambda(Y)X=0$ for any $X, Y\in\fr{g}$. Thus    $\Lambda(Y)[X, Y]=-\Lambda([X, Y])Y$  and   after setting $Y=Z$  inside the equation $R\equiv 0$,  it follows that
  \begin{eqnarray*}
  0&=&-\Lambda(Y)\Lambda(X)Y-\Lambda([X, Y])Y= -\Lambda(Y)\Lambda(X)Y+\Lambda(Y)[X, Y]=-\Lambda(Y)\big(\Lambda(X)Y-[X, Y]\big).
  \end{eqnarray*}
   Therefore $\Lambda(X)=\ad(X)$ or $\Lambda(X)=0$, for any $X\in\fr{g}$. The converse is trivial.  
           \end{proof}

\noindent  From now on we shall denote the  special connections presented in Lemma \ref{REPRES}, $(b)$  by $\nabla^{+}$ and $\nabla^{-}$, respectively. The torsion is given by $T^{\pm}(X, Y)=\pm [X, Y]$.
 Both $\nabla^{\pm}$ can be viewed as special members of   these  bi-invariant linear connections on $G$,  whose Nomizu map induces {\it derivations} on the corresponding Lie algebra $\fr{g}$ (for $\nabla^{-}$ trivially).  In the sequel  we   show  that  this is the desired condition that answers Question \ref{q2}.  
  First we propose  a   formula which allows us to characterize the  $\Ad(G)$-equivariant derivations on $\fr{g}$   in terms   of  the  curvature and the covariant derivative of the  torsion of  a bi-invariant connection satisfying (\ref{stc}). 
   \begin{proposition}\label{DERIV}
  Let $G$ be a compact connected Lie group endowed with  a   bi-invariant affine connection   $\nabla$  whose Nomizu map  $\Lambda : \fr{g}\to\Ed(\fr{g})$ satisfies (\ref{stc}).  Then,    $\Lambda$ is a derivation of $\fr{g}$, i.e.  $\Lambda : \fr{g}\to{\rm Der}(\fr{g})$,   if and only if the curvature $R$ and   the covariant derivative of the torsion $T$  of $\nabla$ satisfy the following relation: 
\begin{equation}\label{stary}
(\nabla_{Z}T)(X, Y)=2\big\{ R(Z, X)Y- \Lambda(Y)\big([Z, X]-\Lambda(Z)X\big)\big\}, \quad \forall  \ X, Y, Z\in\fr{g}.   
\end{equation}
    \end{proposition}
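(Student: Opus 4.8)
The plan is to prove the equivalence by a direct computation: I will expand both sides of (\ref{stary}) in terms of the Nomizu map and show that their difference is precisely the expression measuring the failure of $\Lambda(Z)$ to be a derivation of $\fr{g}$. Three ingredients are assembled first. From (\ref{torsion}), specialized to the bi-invariant case (so that $[X,Y]_{\fr m}=[X,Y]$), the torsion is $T(X,Y)=\Lambda(X)Y-\Lambda(Y)X-[X,Y]$, and the curvature is $R(X,Y)=[\Lambda(X),\Lambda(Y)]-\Lambda([X,Y])$, exactly the form already used in the proof of Lemma \ref{REPRES}. To these I add the standard expression for the covariant derivative of an invariant tensor under an invariant connection,
\[
(\nabla_Z T)(X,Y)=\Lambda(Z)T(X,Y)-T(\Lambda(Z)X,Y)-T(X,\Lambda(Z)Y),
\]
which follows from the fact that $\nabla-\nabla^{c}$ is the $G$-invariant difference tensor with value $\Lambda$ at $e$ together with $\nabla^{c}T=0$. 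Finally I record, as in Lemma \ref{REPRES}, the polarization of (\ref{stc}): the hypothesis $\Lambda(X)X=0$ is equivalent to $\Lambda(X)Y=-\Lambda(Y)X$ for all $X,Y\in\fr{g}$.

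First I would use the polarized identity to collapse the torsion into the compact form $T(X,Y)=2\Lambda(X)Y-[X,Y]$. Substituting this into the covariant-derivative formula expands $(\nabla_Z T)(X,Y)$ into the six terms
\[
2\Lambda(Z)\Lambda(X)Y-2\Lambda(X)\Lambda(Z)Y-2\Lambda(\Lambda(Z)X)Y-\Lambda(Z)[X,Y]+[\Lambda(Z)X,Y]+[X,\Lambda(Z)Y].
\]
Next I would expand the right-hand side of (\ref{stary}). Inserting the curvature formula gives $2R(Z,X)Y=2\Lambda(Z)\Lambda(X)Y-2\Lambda(X)\Lambda(Z)Y-2\Lambda([Z,X])Y$, while the polarization rewrites the correction term as $-2\Lambda(Y)\big([Z,X]-\Lambda(Z)X\big)=2\Lambda([Z,X])Y-2\Lambda(\Lambda(Z)X)Y$. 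The two occurrences of $\Lambda([Z,X])Y$ cancel, so the right-hand side reduces to $2\Lambda(Z)\Lambda(X)Y-2\Lambda(X)\Lambda(Z)Y-2\Lambda(\Lambda(Z)X)Y$, which is exactly the three non-bracket terms of the expansion above.

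Subtracting, every term built from compositions of $\Lambda$ cancels identically, and the difference of the two sides of (\ref{stary}) collapses to $[\Lambda(Z)X,Y]+[X,\Lambda(Z)Y]-\Lambda(Z)[X,Y]$. Hence (\ref{stary}) holds for all $X,Y,Z\in\fr{g}$ if and only if $\Lambda(Z)[X,Y]=[\Lambda(Z)X,Y]+[X,\Lambda(Z)Y]$ for all $X,Y,Z$, which is precisely the assertion that each $\Lambda(Z)$ is a derivation of $\fr{g}$, i.e.\ $\Lambda:\fr{g}\to{\rm Der}(\fr{g})$. I do not anticipate a genuine obstacle; the computation is elementary once the three formulas are in place. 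The only point requiring care is the twofold use of the polarization of (\ref{stc}) --- both to simplify the torsion to $2\Lambda(X)Y-[X,Y]$ and to convert $\Lambda(Y)(\cdots)$ into $-\Lambda(\cdots)Y$ in the correction term --- since it is exactly this skew-symmetry that forces the quadratic terms to match and the $\Lambda([Z,X])Y$ contributions to cancel.
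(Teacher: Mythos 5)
Your computation is correct and is essentially the paper's own proof: the same three ingredients (the polarized form $T(X,Y)=2\Lambda(X)Y-[X,Y]$, the formula $(\nabla_ZT)(X,Y)=\Lambda(Z)T(X,Y)-T(\Lambda(Z)X,Y)-T(X,\Lambda(Z)Y)$, and the curvature identity) are combined in the same way, and your observation that the two sides of (\ref{stary}) differ exactly by $\Lambda(Z)[X,Y]-[\Lambda(Z)X,Y]-[X,\Lambda(Z)Y]$ is precisely the paper's term $D(Z,X,Y)$. The only cosmetic difference is that you subtract the two sides while the paper rewrites the left-hand side as the right-hand side minus $D(Z,X,Y)$.
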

  \begin{proof}
  The proof is direct.  Crucial is our assumption  $\Lambda(X)X=0$ and hence we  mention that for bi-invariant  connections  without this property   our claim fails.    For simplicity set $D(Z, X, Y):=\Lambda(Z)[X, Y]-[\Lambda(Z)X, Y]-[X, \Lambda(Z)Y]$  and  notice that the endomorphism $\Lambda(Z) : \fr{g}\to\fr{g}$ is a derivation  if and only if  $D(Z, X, Y)=0$,  for any $X, Y, Z\in\fr{g}$.  Now, for   any $Z\in\fr{g}$ we view the covariant derivative of  the torsion $T(X, Y)=2\Lambda(X)Y-[X, Y]$ as a bilinear map $\nabla_{Z}T : \fr{g}\times\fr{g}\to\fr{g}$. Then, because  $\Lambda(\Lambda(Z)X)Y=-\Lambda(Y)\Lambda(Z)X$ and $\Lambda([Z, X])Y=-\Lambda(Y)[Z, X]$ for any $X, Y, Z\in\fr{g}$, we obtain that
          \begin{eqnarray*}
  (\nabla_{Z}T)(X, Y)&=&\Lambda(Z)T(X, Y)-T(\Lambda(Z)X, Y)-T(X, \Lambda(Z)Y)\\
  &=&2\Lambda(Z)\Lambda(X)Y-\Lambda(Z)[X, Y]-2\Lambda(\Lambda(Z)X)Y\\
  &&+[\Lambda(Z)X, Y]-2\Lambda(X)\Lambda(Z)Y+[X, \Lambda(Z)Y]\\
  &=& 2\Lambda(Z)\Lambda(X)Y+2\Lambda(Y)\Lambda(Z)X-2\Lambda(X)\Lambda(Z)Y-D(Z, X, Y)\\
  &\overset{(\dag)}{=}&2R(Z, X)Y+2\Lambda([Z, X])Y+2\Lambda(Y)\Lambda(Z)X-D(Z, X, Y)\\
   &=& 2R(Z, X)Y-2\Lambda(Y)\big([Z, X]-\Lambda(Z)X\big)-D(Z, X, Y),
  \end{eqnarray*}
  where in  $(\dag)$ we used  $R(Z, X)Y=\Lambda(Z)\Lambda(X)Y-\Lambda(X)\Lambda(Z)Y-\Lambda([Z, X])Y$.  
    \end{proof}

   \begin{corollary}\label{miso}  Let $G$ be a compact connected Lie group endowed with a  bi-invariant affine connection $\nabla$ whose Nomizu map $\Lambda : \fr{g}\to{\rm Der}(\fr{g})\subset\Ed(\fr{g})$   is a  derivation and satisfies (\ref{stc}).  If $\nabla$  is flat, i.e. $R\equiv0$, or $R(Z, X)Y=\Lambda(Y)\big([Z, X]-\Lambda(Z)X\big)$, then the corresponding torsion $T$  is  $\nabla$-parallel.  
      \end{corollary}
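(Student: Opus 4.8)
The plan is to read the statement off directly from the master identity established in Proposition \ref{DERIV}. By hypothesis the Nomizu map $\Lambda : \fr{g}\to{\rm Der}(\fr{g})$ is a derivation of $\fr{g}$ and satisfies (\ref{stc}), so Proposition \ref{DERIV} applies verbatim and gives the relation (\ref{stary}),
\[
(\nabla_{Z}T)(X, Y)=2\big\{ R(Z, X)Y- \Lambda(Y)\big([Z, X]-\Lambda(Z)X\big)\big\},
\]
valid for all $X, Y, Z\in\fr{g}$. Everything then reduces to checking that, under either of the two curvature hypotheses, the bracketed expression on the right-hand side vanishes identically, whence $\nabla T\equiv 0$.

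The second hypothesis is immediate. If $R(Z, X)Y=\Lambda(Y)\big([Z, X]-\Lambda(Z)X\big)$ for all $X, Y, Z$, then the two summands inside the braces in (\ref{stary}) cancel, so $(\nabla_{Z}T)(X, Y)=0$ and the torsion is $\nabla$-parallel. No further input is needed here.

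The flat case takes slightly more work, since merely setting $R\equiv 0$ in (\ref{stary}) leaves the a priori nonzero term $-2\Lambda(Y)\big([Z, X]-\Lambda(Z)X\big)$. To dispose of it I would invoke Lemma \ref{REPRES}: a flat bi-invariant affine connection satisfying (\ref{stc}) must have either $\Lambda(X)=\ad(X)$ or $\Lambda(X)=0$ for all $X\in\fr{g}$. In the first case $[Z, X]-\Lambda(Z)X=[Z, X]-[Z, X]=0$, and in the second case $\Lambda(Y)=0$; in both situations the residual term vanishes and $(\nabla_{Z}T)\equiv 0$. Equivalently one may argue directly that the torsions $T^{\pm}(X, Y)=\pm[X, Y]$ of $\nabla^{\pm}$ are parallel, the case $\Lambda=\ad$ being a reformulation of the Jacobi identity.

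The only genuinely substantive point—and the step I expect to be the main obstacle—is recognizing that flatness alone does \emph{not} literally make the right-hand side of (\ref{stary}) vanish term by term, so that Lemma \ref{REPRES} is truly required to control the derivation term $\Lambda(Y)\big([Z, X]-\Lambda(Z)X\big)$. Once this is noticed, the corollary is a one-line substitution into (\ref{stary}) in each of the two cases.
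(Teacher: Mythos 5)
Your proposal is correct and follows the paper's own argument essentially verbatim: invoke Proposition \ref{DERIV} to get (\ref{stary}), observe that the second curvature hypothesis makes the right-hand side cancel immediately, and in the flat case use Lemma \ref{REPRES} to force $\Lambda=\ad$ or $\Lambda=0$, either of which kills the residual term $-2\Lambda(Y)\big([Z,X]-\Lambda(Z)X\big)$. Your case-by-case justification of that last vanishing is in fact slightly more explicit than the paper's one-line assertion, and you correctly identify that flatness alone does not suffice without Lemma \ref{REPRES}.
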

  \begin{proof}
 If $R\equiv 0$, then    equation   (\ref{stary}) reduces to   $(\nabla_{Z}T)(X, Y)=-2\Lambda(Y)\big([Z, X]-\Lambda(Z)X\big)$.  Simultaneously,   Lemma \ref{REPRES} ensures  that    $\Lambda=\Lambda^{\pm}$ and then for the left hand side one gets $-2\Lambda(Y)\big([Z, X]-\Lambda(Z)X)=0$. Hence $\nabla T=0$, i.e. $\nabla^{\pm }T^{\pm }=0$. For example, this is the case if $G$ is  semi-simple, since then any derivation   is  inner (however  notice  that in the compact   case   this  argument fails, see Proposition \ref{John}). Now, if $R(Z, X)Y=\Lambda(Y)\big([Z, X]-\Lambda(Z)X\big)$, then it is immediate from (\ref{stary}) that $\nabla T\equiv 0$.  An alternative way that avoids (\ref{stary}) but includes a few more computations  occurs due to the following observation.  For a bi-invariant affine connection on $G$ satisfying our assumptions, it is not difficult to prove that the equation  $R(Z, X)Y=\Lambda(Y)\big([Z, X]-\Lambda(Z)X\big)$ is equivalent to $[\Lambda(Z), \Lambda(Y)]=\Lambda(\Lambda(Z)Y)$ (as an endomorphism of $\fr{g}$),  or in other words 
$[\Lambda(Z), \Lambda(Y)]X=\Lambda(\Lambda(Z)Y)X=-\Lambda(X)\Lambda(Z)Y$  for any $X, Y, Z\in\fr{g}$. By using  this relation and the properties of $\Lambda$,  a straightforward computation shows   that  $(\nabla_{Z}T)(X, Y)=0$  for any $X, Y, Z\in\fr{g}$.
   \end{proof}
  \noindent  By combining this with Lemma \ref{REPRES} we conclude that
   \begin{corollary}\label{GENcs}
  On a  compact connected Lie group $G$ there exist exactly two    bi-invariant affine connections satisfying (\ref{stc}), which are flat and have parallel torsion.   These are the connections $\nabla^{\pm}$ described in Lemma \ref{REPRES} and they coincide with the $\pm 1$-connections of Cartan-Schouten.
      \end{corollary}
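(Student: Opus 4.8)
The plan is to combine the two preceding results, Lemma \ref{REPRES} and Corollary \ref{miso}, since together they already pin down both the count and the stated properties. First I would invoke Lemma \ref{REPRES}: among all bi-invariant affine connections on $G$ satisfying (\ref{stc}), flatness $R\equiv 0$ is equivalent to the Nomizu map $\Lambda$ being a representation, and this in turn forces $\Lambda(X)=\ad(X)$ or $\Lambda(X)=0$ for every $X\in\fr{g}$. Hence there are precisely two flat bi-invariant affine connections compatible with (\ref{stc}), namely $\nabla^{+}$ and $\nabla^{-}$, whose torsions are $T^{\pm}(X,Y)=\pm[X,Y]$. This settles the ``exactly two'' part of the statement.

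Next I would check that these two connections carry the parallel-torsion property, so that the extra requirement in the statement does not eliminate either of them. The Nomizu maps of $\nabla^{\pm}$ are derivations of $\fr{g}$: $\ad(X)$ is the inner derivation associated with $X$, while the zero map is trivially a derivation. Both maps satisfy (\ref{stc}), since $\ad(X)X=[X,X]=0$ and $0\cdot X=0$, and both connections are flat by the previous step. Therefore Corollary \ref{miso} applies verbatim and yields $\nabla^{\pm}T^{\pm}=0$. Thus each $\nabla^{\pm}$ is simultaneously flat, satisfies (\ref{stc}), and has parallel torsion, and by the first step these are the only such connections.

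Finally I would identify $\nabla^{\pm}$ with the classical $\pm 1$-connections of Cartan--Schouten, which is essentially a matter of matching torsions: the connections with $T(X,Y)=\pm[X,Y]$ are by definition the Cartan--Schouten flat connections. In the simple case this is already visible from Theorem \ref{cc}, where the reductive decomposition $\fr{p}_{\al}$ with $\al=\mp 1$ gives $\Lambda(X)=\frac{1-\al}{2}\ad(X)\in\{\ad(X),0\}$ together with $R^{\pm 1}\equiv 0$; but the present argument needs no simplicity assumption on $G$, only the torsion formula $T^{\pm}(X,Y)=\pm[X,Y]$.

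There is no genuine obstacle once the two preceding results are in place; the single point that must be handled with care is that the parallel-torsion clause in the statement is \emph{not} an additional restriction but is automatically satisfied by every flat connection obeying (\ref{stc}). The entire content of the corollary is the interplay between Lemma \ref{REPRES}, which produces exactly $\nabla^{\pm}$ out of flatness together with (\ref{stc}), and Corollary \ref{miso}, which guarantees that their torsion is parallel; the Cartan--Schouten identification then follows immediately.
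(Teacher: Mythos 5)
Your proposal is correct and follows essentially the same route as the paper, which introduces this corollary with the words ``By combining this with Lemma \ref{REPRES} we conclude that'': Lemma \ref{REPRES} pins down $\nabla^{\pm}$ as the only flat bi-invariant connections satisfying (\ref{stc}), and Corollary \ref{miso} (whose derivation hypothesis you rightly verify for $\ad(X)$ and $0$) gives $\nabla^{\pm}T^{\pm}=0$. Your observation that the parallel-torsion clause is automatic rather than an extra restriction matches the intended reading.
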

   
    \begin{proposition}\label{John} 
 Let $\fr{g}$ be a reductive Lie algebra. Then,  any derivation $D: \fr{g}\to{\rm Der}(\fr{g})$ is given by
  $D(X)=\phi(Z)\oplus \ad(X_{s})$   for some linear map $\phi : \fr{g}_{0}\to\Ed(\fr{g}_{0})$   in the centre $\fr{g}_{0}$. In particular, $H^{1}(\fr{g}, \fr{g})\cong \Ed(\fr{g}_{0})$.  
 \end{proposition}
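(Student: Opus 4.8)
The plan is to read off everything from the reductive decomposition $\fr{g}=\fr{g}_0\oplus\fr{g}_s$, where $\fr{g}_0=Z(\fr{g})$ is the centre and $\fr{g}_s=[\fr{g},\fr{g}]=\fr{g}_1\oplus\cdots\oplus\fr{g}_r$ is the semisimple part (the sum of the simple ideals), and to determine first the structure of the full derivation algebra ${\rm Der}(\fr{g})$. The starting observation is that every $\delta\in{\rm Der}(\fr{g})$ respects this splitting: if $Z\in\fr{g}_0$ then $[\delta Z,Y]=\delta[Z,Y]-[Z,\delta Y]=0$ for all $Y$, so $\delta Z\in\fr{g}_0$, while $\delta(\fr{g}_s)=\delta([\fr{g},\fr{g}])\subseteq[\fr{g},\fr{g}]=\fr{g}_s$ by the Leibniz rule. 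Because $\fr{g}_0$ is central the cross compatibility between the two blocks holds automatically, so $\delta=\delta_0\oplus\delta_s$ with $\delta_0\in\Ed(\fr{g}_0)$ arbitrary (the centre is abelian, hence every endomorphism is a derivation) and $\delta_s\in{\rm Der}(\fr{g}_s)$.

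The key input is then Whitehead's first lemma, $H^1(\fr{g}_s,\fr{g}_s)=0$: every derivation of the semisimple algebra $\fr{g}_s$ is inner, $\delta_s=\ad(W)$ for a unique $W\in\fr{g}_s$, and such an inner derivation preserves each simple ideal. This already gives ${\rm Der}(\fr{g})=\Ed(\fr{g}_0)\oplus\ad(\fr{g}_s)$. To upgrade this to the stated form of the derivation-valued map $D:\fr{g}\to{\rm Der}(\fr{g})$, I would regard both sides as $\fr{g}$-modules under the adjoint action, the action on ${\rm Der}(\fr{g})$ being $\delta\mapsto[\ad(\,\cdot\,),\delta]$. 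A direct check shows that $\Ed(\fr{g}_0)$ is then a trivial submodule, whereas $[\ad(W),\ad(A)]=\ad([W,A])$ identifies $\ad(\fr{g}_s)\cong\bigoplus_i\fr{g}_i$ as the sum of the (pairwise inequivalent, nontrivial) adjoint modules. Since $D$ is $\Ad(G)$-equivariant it is a homomorphism of $\fr{g}$-modules $\fr{g}_0\oplus\bigoplus_i\fr{g}_i\to\Ed(\fr{g}_0)\oplus\bigoplus_i\fr{g}_i$, and Schur's lemma forces it to map $\fr{g}_0$ into $\Ed(\fr{g}_0)$ by an arbitrary linear $\phi:\fr{g}_0\to\Ed(\fr{g}_0)$ and each $\fr{g}_i$ into its own copy $\ad(\fr{g}_i)$ by a scalar, yielding $D(X)=\phi(Z)\oplus\ad(X_s)$ (the inner part being $\ad$ of the semisimple component of $X$, up to an ideal-wise rescaling).

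The cohomological corollary is then immediate: the inner derivations are exactly $\ad(\fr{g})=\ad(\fr{g}_s)$, since the centre acts trivially, so $H^1(\fr{g},\fr{g})={\rm Der}(\fr{g})/{\rm Inn}(\fr{g})=\big(\Ed(\fr{g}_0)\oplus\ad(\fr{g}_s)\big)/\ad(\fr{g}_s)\cong\Ed(\fr{g}_0)$. I expect the only genuinely delicate points to be, first, the bookkeeping that rules out ``cross terms'' of a derivation between $\fr{g}_0$ and $\fr{g}_s$ or between distinct simple ideals, and, second, the application of Schur's lemma over $\bb{R}$: one has to use that the adjoint representation of each compact simple ideal is absolutely irreducible of real type, so that $\Hom_{\fr{g}}(\fr{g}_i,\fr{g}_i)=\bb{R}$ and the multiplicities really are scalars rather than elements of a larger division algebra.
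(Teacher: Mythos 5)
Your proof is correct and follows essentially the same route as the paper: decompose $\fr{g}=\fr{g}_0\oplus\fr{g}_{ss}$ into centre and semisimple part, show a derivation cannot mix the two blocks, and invoke Whitehead's first lemma on $\fr{g}_{ss}$ to obtain ${\rm Der}(\fr{g})=\Ed(\fr{g}_0)\oplus\ad(\fr{g})$ and hence $H^{1}(\fr{g},\fr{g})\cong\Ed(\fr{g}_0)$. Your Leibniz-rule observation that $\delta(\fr{g}_{ss})=\delta([\fr{g},\fr{g}])\subseteq[\fr{g},\fr{g}]$ is in fact cleaner than the paper's contradiction argument ruling out derivations $\fr{g}_{ss}\to\fr{g}_{0}$, and your closing Schur-lemma refinement (pinning the inner part to $\ad$ of the semisimple component, ideal by ideal) uses an equivariance hypothesis that the paper only introduces later, in Proposition \ref{CLG}.
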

 \begin{proof}
  Consider the decomposition of $\fr{g}$ into its centre and  semi-simple part,    i.e. $\fr{g}=\fr{g}_{0}\oplus[\fr{g}, \fr{g}]=\fr{g}_{0}\oplus\fr{g}_{ss}$ and   express any $X\in\fr{g}$  in a unique way by $X=Z+X_{s}$,  where $Z\in\fr{g}_{0}$ and $X_{s}\in\fr{g}_{ss}$. 
  Then,  define  $D: \fr{g}\to{\rm End}(\fr{g})$ by $D(X):=\phi(Z)+\ad(X_{s}) (\equiv\phi(Z)+\ad(X))$   for some linear map $\phi : \fr{g}_{0}\to\Ed(\fr{g}_{0})$.  Obviously, this  is  a derivation of $\fr{g}$ and   in order to prove our claim  it is sufficient  to show  that for any $Z\in\fr{g}_{0}$ and $X_{s}\in\fr{g}_{ss}$   derivations of the form $D_{1}(Z) : \fr{g}_{0}\to\fr{g}_{ss}$ and $D_{2}(X_{s}) : \fr{g}_{ss}\to\fr{g}_{0}$ are necessarily  trivial. This follows easily for $D_{1}$, because  the centre of any Lie algebra is a characteristic ideal, i.e. remains invariant under derivations. Consider  now some $\al, \be\in\fr{g}$ with $[\al, \be]\in\fr{g}_{ss}$ and assume that for any $X_{s}\in\fr{g}_{ss}$ the linear map  $D_{2}(X_{s}) : \fr{g}_{ss}\to\fr{g}_{0}$ is a non-trivial derivation.     Then,  $D_{2}(X_{s})$ acts  on $[\al, \be]$ as an  inner derivation, i.e.  $D_{2}(X_{s})[\al, \be]=\ad_{X_{s}}[\al, \be]=[X_{s},[\al, \be]]$. On the other hand   we have that $D_{2}(X_{s})[\al, \be]=[D_{2}(X_{s})\al, \be]+[\al, D_{2}(X_{s})\be]=0$,   since  $D_{2}(X_{s})\al, D_{2}(X_{s})\be\in\fr{g}_{0}$.  Because $X_{s}, [\al, \be]\in\fr{g}_{ss}$, this gives  a contradiction. In this way we conclude that the spaces ${\rm Der}(\fr{g}_{0}, \fr{g}_{ss})$ and ${\rm Der}(\fr{g}_{ss}, \fr{g}_{0})$ must be trivial  and for the Lie algebra ${\rm Der}(\fr{g})$  we get the direct sum decomposition 
  \[
  {\rm Der}(\fr{g})={\rm Der}(\fr{g}_{0})\oplus{\rm Der}(\fr{g}_{ss})=\Ed(\fr{g}_{0})\oplus \ad(\fr{g})={\rm Out}(\fr{g})\oplus{\rm Inn}(\fr{g}),
  \]
   where ${\rm Inn}(\fr{g})\cong \fr{g}\backslash\fr{g}_{0}:=\{\ad(X) : X\in\fr{g}\}=\ad(\fr{g})$ denotes the space of all inner derivations  (the adjoint algebra) and ${\rm Out}(\fr{g})$ is the quotient algebra of outer derivations, i.e. ${\rm  Out}(\fr{g})\cong {\rm Der}(\fr{g})\backslash{\rm  Inn}(\fr{g})$. 
  The algebra  ${\rm Out}(\fr{g})$ coincides with the first cohomology $H^{1}(\fr{g}, \fr{g})$   of $\fr{g}$ acting on itself by the adjoint representation, see \cite[p.~57]{GOV}.  Hence  $H^{1}(\fr{g}, \fr{g})\cong \Ed(\fr{g}_{0})$.
              \end{proof}

\noindent  Therefore, given a compact connected Lie group $G$ and an arbitrary   derivation $D : \fr{g}\to{\rm Der}(\fr{g})$, the relation $D(X)X=0$   is {\it not} necessarily true for any $X\in\fr{g}$.  Next we will show  that if  $D : \fr{g}\to{\rm Der}(\fr{g})$ is  an $\Ad(G)$-equivariant derivation with  $D(X)X=0$ for any $X\in\fr{g}$, then $\phi$   must be   trivial $\phi\equiv 0$, i.e. $D$ is an inner derivation. Although for non-central elements $g\notin Z(G)$ one can prove this result  easily, for central elements the equivariance condition does not provide any further information  and a proof of the claim seems difficult.  Proposition \ref{DERIV}  allows us to overpass this problem. In fact, we  provide two different proofs with the first one being  independent of Laquer's  classification results \cite{Laq1}.   
        \begin{proposition}\label{CLG}
   Let $G$ be a  compact connected Lie group  and let $D : \fr{g}\to{\rm Der}(\fr{g})$ be a derivation of $\fr{g}=T_{e}G$. Assume that   $D(\Ad(g)X)=\Ad(g)D(X)\Ad(g)^{-1}$ for any $g\in G$, $X\in\fr{g}$ and   that   $D(X)X=0$  for any $X\in\fr{g}$.   Then  $D$ is an inner derivation.    \end{proposition}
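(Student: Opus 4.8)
The plan is to reduce the whole statement to the vanishing of the ``outer part'' of $D$, using the structure theorem for derivations of a reductive Lie algebra. Write $\fr{g}=\fr{g}_{0}\oplus\fr{g}_{ss}$ for the splitting of $\fr{g}=T_{e}G$ into its centre $\fr{g}_{0}$ and its semisimple part $\fr{g}_{ss}=[\fr{g},\fr{g}]$, and decompose $X=Z+X_{s}$ accordingly. By Proposition~\ref{John} each derivation $D(X)\in{\rm Der}(\fr{g})$ is uniquely of the form $D(X)=\phi_{X}\oplus\ad(\sigma(X))$, with $\phi_{X}\in\Ed(\fr{g}_{0})$ acting on the centre and vanishing on $\fr{g}_{ss}$, and $\sigma(X)\in\fr{g}_{ss}$. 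Since the inner part $\ad(\sigma(X))$ belongs to $\ad(\fr{g})$ by construction, proving that $D$ is inner is \emph{equivalent} to proving that the outer part vanishes, $\phi\equiv 0$; this is the single assertion I would establish.

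First I would squeeze out the content of the two hypotheses. Since $G$ is connected, $\Ad(G)$ fixes $\fr{g}_{0}$ pointwise, so conjugation by $\Ad(g)$ acts trivially on $\Ed(\fr{g}_{0})$ while sending $\ad(\sigma(X))$ to $\ad(\Ad(g)\sigma(X))$. Comparing outer parts in the equivariance relation gives $\phi_{\Ad(g)X}=\phi_{X}$; differentiating yields $\phi_{[Y,X]}=0$, so $\phi$ kills $\fr{g}_{ss}$ and depends only on the central component, $\phi_{X}=\phi(Z)$. Comparing inner parts shows that $\sigma$ is $\Ad(G)$-equivariant, hence vanishes on $\fr{g}_{0}$ and, by Schur's lemma on each simple ideal, contributes only genuine inner derivations. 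Unwinding $D(X)X=0$ and projecting $D(X)X=\phi(Z)Z+[\sigma(X),X_{s}]$ onto $\fr{g}_{0}\oplus\fr{g}_{ss}$ isolates the central obstruction $\phi(Z)Z=0$ for all $Z$, equivalently (by polarisation) the alternating identity $\phi(Z)W=-\phi(W)Z$; the $\fr{g}_{ss}$-component imposes nothing about the inner/outer dichotomy. Thus, exactly as the surrounding text anticipates, the claim is already settled on the semisimple (non-central) directions, and the whole difficulty is concentrated in the central, outer piece $\phi$, precisely where equivariance is vacuous because $\Ad(G)$ is trivial on $\fr{g}_{0}$.

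The decisive and, I expect, hardest step is to upgrade the alternating identity to $\phi\equiv 0$, and this is where I would invoke Proposition~\ref{DERIV} to obtain a proof independent of Laquer. A direct computation with the splitting shows that the torsion satisfies $T(X,Y)=2\,D(X)Y-[X,Y]$ and restricts on the centre to $T(Z,W)=2\phi(Z)W$, while the only possibly non-inner part of the curvature is $[\phi(Z_{X}),\phi(Z_{Y})]\in\Ed(\fr{g}_{0})$, again supported on the centre. Feeding $R$, $T$ and the derivation hypothesis into the identity~(\ref{stary}), and using that compactness of $G$ supplies an $\Ad(G)$-invariant inner product under which $\ad(\fr{g})\subset\fr{so}(\fr{g})$ and the metric curvature symmetries hold, I would aim to extract enough symmetry of $\phi(Z)$ to force its vanishing, so that $D(X)=\ad(\sigma(X))$ is inner. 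The main obstacle is precisely that~(\ref{stary}) degenerates on purely central inputs and must instead be exploited through its interaction with compactness; controlling this interaction is the heart of the argument.

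For completeness I would give a second, representation-theoretic proof. By Laquer's classification \cite{Laq1} (see also \cite{AFH}) the $\Ad(G)$-equivariant bilinear maps $\fr{g}\times\fr{g}\to\fr{g}$ that differ from the Lie bracket arise only from the factors $\SU(n)$ $(n\ge 3)$ and $\U(n)$ $(n\ge 2)$, through the explicit symmetric ``exceptional'' maps recalled in Theorem~\ref{AFHoll}. One then checks that each exceptional map either fails to take values in ${\rm Der}(\fr{g})$ or is incompatible with $D(X)X=0$ once its symmetry is accounted for, so that on the simple ideals only the bracket-type inner maps $\ad(\sigma(X))$ survive; combined with the central analysis above this again yields $\phi\equiv 0$. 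This route avoids Proposition~\ref{DERIV} but leans on \cite{Laq1, AFH}.
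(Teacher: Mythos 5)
Your reduction to the vanishing of the outer part $\phi$ via Proposition \ref{John}, and the extraction of $\phi(Z)Z=0$ (equivalently, by polarisation, $\phi(Z)W=-\phi(W)Z$) from the two hypotheses, match the paper's first proof exactly. But the decisive step --- showing that this alternating $\phi$ on the centre must vanish --- is precisely where your proposal stops being a proof: you write that you ``would aim to extract enough symmetry of $\phi(Z)$ to force its vanishing'' and call this ``the heart of the argument,'' but you do not carry it out. Moreover, the direction you indicate points away from what actually works: you assert that (\ref{stary}) ``degenerates on purely central inputs'' and propose to exploit instead an $\Ad(G)$-invariant metric and curvature symmetries. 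The paper does the opposite. It restricts everything to the centre $Z(G)$, viewed as a compact abelian Lie group carrying the bi-invariant connection $\nabla^{\phi}$ determined by $\phi$, and evaluates (\ref{stary}) precisely on purely central inputs: there one computes $(\nabla^{\phi}_{Z}T^{\phi})(X,Y)=\phi(\phi(Z)Y)X-\phi(\phi(Z)X)Y$, so that (\ref{stary}) becomes $\phi(\phi(Z)Y)X-\phi(\phi(Z)X)Y=2\phi(Y)\phi(Z)X$; setting $X=Z$ yields $\phi(\phi(X)Y)X=0$ for all $X,Y\in\fr{g}_{0}$, and combining this with the alternating identity forces a contradiction unless $\phi\equiv 0$. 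No invariant metric, no $\fr{so}(\fr{g})$, and no curvature symmetry enters --- only the algebraic identity (\ref{stary}) applied on the centre. That computation is the missing content of your first route.

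Your second, representation-theoretic route is essentially the paper's second proof (exclude the exotic $\SU(n)$ and $\U(n)$ families by checking that they are not derivations or violate $D(X)X=0$, then invoke Laquer's exhaustion of bi-invariant affine connections), but as written it inherits the same gap: you close it by appealing to ``the central analysis above,'' which only established $\phi(Z)Z=0$, not $\phi\equiv 0$. As it stands, neither route disposes of the outer derivations supported on the centre --- the one place where equivariance is vacuous, and hence the only genuinely hard part of the statement.
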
 
   \begin{proof}
     {\bf 1st way:}  By Proposition \ref{John}, write  $D=\phi\oplus\ad$ for some linear map $\phi : \fr{g}_{0}\to\Ed(\fr{g}_{0})$.  Because   $\Ad(G)\fr{g}_{0}=\fr{g}_{0}$ and the adjoint representation of $\fr{g}$ is $\Ad(G)$-equivariant, it turns out that $D$ has  the same property if and only if    $\phi(\Ad(g)Z)=\phi(Z)(\equiv \Ad(g)\phi(Z)\Ad(g)^{-1})$, for any     $g\in G$ and $Z\in\fr{g}_{0}$  (where we view $\Ad(g)\phi(Z)\Ad(g)^{-1}$   as an  endomorphism $\fr{g}_{0}\to\fr{g}_{0}$). In addition, the condition $D(X)X=0$ for any $X\in\fr{g}$  is equivalent to   $\phi(Z)Z=0$ for any $Z\in\fr{g}_{0}$.
Now, it is sufficient to prove that $\phi\equiv 0$. Assume in contrast that $\phi(Y)\neq 0$ for some $Y\in\fr{g}_{0}$.    We view the centre $Z(G)$ of $G$ as a compact Lie group itself  and we identify $T_{e}(Z(G))=\fr{g}_{0}$ (the centre   $Z(G)$ is closed subgroup of $G$).  Because for any $Z\in\fr{g}_{0}$ the endomorphism $\phi(Z) : \fr{g}_{0}\to\fr{g}_{0}$ is (trivially) a derivation  which satisfies the properties of  Proposition \ref{DERIV},   the associated bi-invariant affine connection on $Z(G)$   satisfies (\ref{stary}) for any $X, Y, Z\in\fr{g}_{0}$.  Let us denote this connection by $\nabla^{\phi}$.  Obviously $R^{\phi}\equiv 0$ (since $\fr{g}_{0}$ is abelian) and $T^{\phi}(Z, Z')=\phi(Z)Z'-\phi(Z')Z$ for any $Z, Z'\in\fr{g}_{0}$. An easy computation also shows that   
   \begin{eqnarray*}
   (\nabla^{\phi}_{Z}T^{\phi})(X, Y)&=&\phi(Z)(T^{\phi}(X, Y))-T^{\phi}(\phi(Z)X, Y)-T^{\phi}(X, \phi(Z)Y)\\
  &=&\phi(\phi(Z)Y)X-\phi(\phi(Z)X)Y,
  \end{eqnarray*}
     since for example $\phi(Z)\phi(X)=\phi(X)\phi(Z)$ for any $Z, X\in\fr{g}_{0}$. Finally,  relation (\ref{stary})  becomes
     \[
      \phi(\phi(Z)Y)X-\phi(\phi(Z)X)Y=2\phi(Y)\phi(Z)X \quad \ \forall \ X, Y, Z\in\fr{g}_{0}. 
     \]
Now, for $X=Z$ it reduces to $\phi(\phi(X)Y)X=0$, for any $X, Y\in\fr{g}_{0}$. Because the identity $\phi(Z)Z=0$ is equivalent to $\phi(X)Y+\phi(Y)X=0$ for any $X, Y\in\fr{g}_{0}$, we can   write $\phi(\phi(Y)X)X=0$. Hence,  it must be $\phi(Y)X=X$, i.e. $\phi(Y) : \fr{g}_{0}\to\fr{g}_{0}$ is the identity map for any $Y\in\fr{g}_{0}$. But then $\phi(Y)Y=Y$,  i.e. $\phi(Y)Y\neq 0$ for any $Y\in\fr{g}_{0}$, which gives rise to a contradiction. Thus $\phi\equiv 0$ and $D\equiv\ad$, as claimed.
    
    \smallskip
       \noindent{\bf 2nd way:}    Proposition \ref{DERIV} characterizes the $\Ad(G)$-equivariant derivations  $D : \fr{g}\to{\rm Der}(\fr{g})$ on the Lie algebra $\fr{g}$ of a   compact connected  Lie group $G$ satisfying  the condition $D(X)X=0$  for any $X\in\fr{g}$.  Such derivations correspond to    bi-invariant affine  connections of $G$, whose Nomizu map $\Lambda : \fr{g}\to\fr{gl}(\fr{g})$    satisfies   the relations (\ref{stc}) and  (\ref{stary}).   For a  compact    simple  Lie group,  except $G=\SU(n)$ $(n\geq 3)$, the   bi-invariant affine connections   are described (up to   scale) by the 1-parameter family  $\Lambda^{\al} : \fr{g}\to\Ed(\fr{g})$  with $\Lambda^{\al}:=((1-\al)/2) \cdot \ad$, for some $\al\in\bb{R}$, which is obviously an  $\Ad(G)$-equivariant  derivation.  For the general compact case, by  adopting the notation of Theorem \ref{AFHoll}   it is easy to see that  for some $\al_{i}\in\bb{R}$  the expression $D(X):=\sum_{i=1}^{r}\Lambda^{\al_{i}}(X)=\sum_{i=1}^{r}((1-\al_{i})/2)\cdot \ad(X)|_{\fr{g}_{i}}$  is a  derivation on $\fr{g}$, which   turns out to be inner (by linearity of $\ad$).  In order to prove our claim, there remains to exclude the exotic connections of $\SU(n)$ and $\U(n)$.          Indeed,  a routine  computation shows that these are not derivations,  in particular: {\it the unique  bi-invariant linear connections  of a compact connected Lie group which induce derivations on the corresponding  Lie algebra are induced by the Lie bracket}.   For example, for $\SU(n)$  the  bilinear map     $\eta^{\rm exc}$ described in Theorem \ref{AFHoll}  does not induce a derivation    nor   does it  satisfy  (\ref{stc}).   The special families  for  $\U(n)$ are more complicated.  For $n=2$, aside the skew-symmetric map induced by the Lie bracket, the new families of symmetric bilinear $\Ad(\U(n))$-equivariant maps   span a 3-dimensional space \cite{Laq1}.  However, neither these are derivations    and the condition  $\eta(X, X)=0$ also fails. The same is true for  $n\geq 3$;  there is a 3-dimensional space generated by symmetric bilinear maps $\eta_{i} : \fr{u}(n)\times\fr{u}(n)\to\fr{u}(n)$ which do not  induce derivations, namely $\eta_{1}(X, Y):= i (X\cdot Y+Y\cdot X)$, $\eta_{2}(X, Y):={\rm tr}(X\cdot Y)\cdot iI$, and $\eta_{3}(X, Y):={\rm tr}(X){\rm tr}(Y)\cdot iI$, but also the skew-symmetric   map $\mu(X, Y)=i\big({\rm tr}(Y)X-{\rm tr}(X)Y\big)$  (see \cite[Thm.~10.1]{Laq1} or \cite[Thm.~3.1]{AFH}). Because   $\mu(X, X)=0$  for any $X\in\fr{u}(n)$,   $\mu$ is at least a  candidate of Proposition \ref{DERIV}.   However  a quick check implies that neither this is a derivation.   Now, although a linear combination $\eta_{c}(X, Y):=c_{1}\eta_{1}(X, Y)+c_{2}\eta_{2}(X, Y)+c_{3}\eta_{3}(X, Y)+c\mu(X, Y)$  gives rise to  an $\Ad(\U(n))$-equivariant bilinear map on $\fr{g}$,   the condition  $\eta_{c}(X, X)=0$ for any $X\in\fr{u}(n)$ is true, if and  only if, $c_{1}=c_{2}=c_{3}=0$.
   Because  on an arbitrary compact Lie group $G$ these connections exhaust all possible   bi-invariant affine connections \cite[Thm. 9.1]{Laq1},  the proof is complete.
   \end{proof}
      Based on Proposition  \ref{CLG}, we are now able  to present  the main  theorem of this section. 
                 \begin{theorem} \label{THM1}  
    Let $G$ be  a compact connected Lie group with a bi-invariant metric $\rho$  and let $\nabla$ be a bi-invariant  affine connection corresponding to a linear $\Ad(G)$-equivariant map    $\Lambda : \fr{g}\to\fr{gl}(\fr{g})$ satisfying (\ref{stc}).   Then $\Lambda$  is a derivation  if and only if  $\nabla$ is metric with respect to $\rho$.   In particular, the class of   bi-invariant {\it affine}  connections  which induce derivations $\Lambda  : \fr{g}\to{\rm Der}(\fr{g})$ on the corresponding Lie algebra $\fr{g}$  coincides with the class of  bi-invariant  {\it metric} connections on $G$.
     \end{theorem}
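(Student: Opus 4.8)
The plan is to prove the equivalence in both directions, leaning on the machinery already assembled in this section. The forward implication (if $\nabla$ is metric with respect to $\rho$, then $\Lambda$ is a derivation) is the easier half and I would dispatch it first. By Theorem~\ref{AFHoll}, any bi-invariant metric connection on $G$ is given up to scale by $\Lambda(X)=\sum_{i=1}^{r}((1-\al_i)/2)\cdot\ad(X)|_{\fr{g}_i}$. Since $\ad(X)$ is a derivation of $\fr{g}$ and each projection onto a simple ideal $\fr{g}_i$ is compatible with the bracket structure, the map $X\mapsto\sum_i((1-\al_i)/2)\ad(X)|_{\fr{g}_i}$ is manifestly a derivation of $\fr{g}$; moreover it visibly satisfies $\Lambda(X)X=0$ because $\ad(X)X=[X,X]=0$. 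Thus every bi-invariant metric connection gives a derivation, and the forward direction follows.

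The converse is where the real content lies, and it is exactly what Proposition~\ref{CLG} was set up to deliver. Suppose $\Lambda:\fr{g}\to{\rm Der}(\fr{g})$ is a derivation satisfying (\ref{stc}), and recall that by Wang's description $\Lambda$ is automatically $\Ad(G)$-equivariant (this is built into being the Nomizu map of a bi-invariant connection). Then $\Lambda$ meets all the hypotheses of Proposition~\ref{CLG}: it is an $\Ad(G)$-equivariant derivation of $\fr{g}$ with $\Lambda(X)X=0$ for every $X$. Proposition~\ref{CLG} therefore forces $\Lambda$ to be an \emph{inner} derivation, i.e. $\Lambda(X)=\ad(\psi(X))$ for some equivariant assignment, and more precisely, combining with the structure theory (Proposition~\ref{John}, which writes $D=\phi\oplus\ad$ with $\phi$ acting on the centre), the vanishing of $\phi$ means $\Lambda$ is built entirely from the bracket. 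I would then observe that any such bracket-induced map $\Lambda(X)=\sum_i c_i\,\ad(X)|_{\fr{g}_i}$, being a multiple of $\ad$ on each simple factor, is skew-symmetric with respect to the $\Ad(G)$-invariant inner product $\langle\,,\,\rangle$ attached to $\rho$, because $\ad(X)\in\fr{so}(\fr{g})$ for the invariant metric. Hence $\Lambda(X)\in\fr{so}(\fr{g})$ for all $X$, which is precisely the statement that $\nabla$ is metric.

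The second sentence of the theorem — that the two classes of connections literally coincide — then follows by assembling the two implications: every bi-invariant metric connection induces a derivation (forward direction), and every bi-invariant affine connection inducing a derivation (and satisfying (\ref{stc})) is metric (converse). Since Corollary~\ref{onedir} already guarantees that a bi-invariant metric connection automatically has $\Lambda(X)X=0$, the hypothesis (\ref{stc}) is not an extra restriction on the metric side, so the identification of the two classes is clean and symmetric.

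The main obstacle is entirely concentrated in invoking Proposition~\ref{CLG} correctly, in particular in being sure that the equivariance hypothesis is genuinely available. The subtlety, flagged in the discussion preceding Proposition~\ref{CLG}, is that for central elements the equivariance condition gives no leverage, so one cannot simply argue that $\phi$ vanishes by a naive averaging or representation-theoretic argument on $\fr{g}_0$; the argument must route through the bi-invariant connection on the centre $Z(G)$ and the torsion-derivative identity (\ref{stary}) from Proposition~\ref{DERIV}. I would be careful to note that this is precisely why Proposition~\ref{CLG} is stated and proved \emph{before} Theorem~\ref{THM1}: the theorem is essentially a repackaging of Proposition~\ref{CLG} together with the classification in Theorem~\ref{AFHoll}, and the only genuine work — excluding the exotic non-metric connections of $\SU(n)$ and $\U(n)$, and forcing $\phi\equiv0$ on the centre — has already been carried out there. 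The proof of the theorem itself should therefore be short, amounting to citing these two results and checking the skew-symmetry of $\ad$ relative to the invariant inner product.
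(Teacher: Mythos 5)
Your proof of the stated biconditional is correct and follows essentially the paper's own route: the direction ``metric $\Rightarrow$ derivation'' is read off from the classification in Theorem~\ref{AFHoll}, and the direction ``derivation $\Rightarrow$ metric'' is reduced to Proposition~\ref{CLG}, which kills the central component $\phi$ and leaves an inner derivation, hence a map with values in $\fr{so}(\fr{g})$. The paper orders the converse slightly differently --- it first derives $B$-skewness of an arbitrary derivation from the invariance of the Killing form under $\exp(t\Lambda(X))\in\Aut(\fr{g})$, and only then invokes Proposition~\ref{CLG} to dispose of the centre --- but the mechanism is the same and your version is equally valid, since $\ad(Y)\in\fr{so}(\fr{g})$ for every $Y$ with respect to any bi-invariant metric.

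The one place where your argument falls short is the final sentence of the theorem, the literal coincidence of the two classes. There the hypothesis (\ref{stc}) is no longer imposed on the affine side, so one must rule out bi-invariant affine connections that induce derivations but violate $\Lambda(X)X=0$; Proposition~\ref{CLG}, whose hypotheses include $D(X)X=0$, is simply not available for such a connection. Your observation that Corollary~\ref{onedir} makes (\ref{stc}) automatic for metric connections only removes the hypothesis from the \emph{metric} side of the equivalence. The paper closes the remaining direction by appealing to Laquer's classification: every bi-invariant affine connection on a compact $G$ is either of bracket type on the simple ideals (and then trivially satisfies (\ref{stc})) or belongs to the exceptional families on $\SU(n)$ and $\U(n)$, and a routine computation shows that no nontrivial combination of the latter induces a derivation. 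Without some such step, your identification of the two classes is not yet complete.
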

         \begin{proof}
For the first part  we    need only to prove   the one direction, since  the converse is obvious due to Theorem \ref{AFHoll}.  Recall  that the Killing form of a Lie algebra $\fr{g}$  (which here we denote by $B\equiv B_{\fr{g}}$)  satisfies the relation $B(AX, AY)=B(X, Y)$,  for any  automorphism $A : \fr{g}\to\fr{g}$, see \cite[p.~13]{GOV}.   If $\Lambda(X)\in{\rm Der}(\fr{g})$, then ${\rm exp}(t\Lambda(X))\in{\rm Aut}(\fr{g})$.   Thus, the derivative of the relation $B(AX, AY)=B(X, Y)$ at $t=0$  for  $A={\rm exp}(t\Lambda(X))$  implies that $B(\Lambda(X)Y, Z)+B(Y, \Lambda(X)Z)=0$ for any $X, Y, Z\in\fr{g}$.  If the Lie group $G$ is simple, then   any $\Ad(G)$-invariant inner product is a multiple  of   $-B$, so $\Lambda(X)\in\fr{so}(\fr{g})$.  If $G$ is just  compact, then  we express the  $\Ad(G)$-invariant inner product associated to $\rho$ by  $\langle \ , \ \rangle=  b|_{\fr{g}_{0}}-\sum_{i=1}^{r}c_{i}\cdot B|_{\fr{g}_{i}}$  for some  $c_{i}>0$ (see Theorem \ref{AFHoll}). This  is possible, because $\rho|_{\fr{g}_{i}}=\text{multiple of} \ -B_{i}$ where     $B_{i}\equiv B_{\fr{g}_{i}}=B|_{\fr{g}_{i}}$, for any $i=1, \dots, r$.  
 As we   explained above,  for  any simple ideal $\fr{g}_{i}$  $(1\leq i\leq r)$ (inner) derivations   become   metric  with respect to $B|_{\fr{g}_{i}}$.  For the centre $\fr{g}_{0}$ not all the derivations are necessarily  metric with respect to the scalar product $b$.  However,    $\Lambda : \fr{g}\to{\rm Der}(\fr{g})$ is  an $\Ad(G)$-equivariant derivation with $\Lambda(X)X=0$ and Proposition  \ref{CLG} guarantees that this is inner. Hence the centre has no contribution and  we finally obtain $\Lambda(X)\in\fr{so}(\fr{g})$  for any $X\in\fr{g}$.    Now, the final result is valid   if one can drop  the condition $\Lambda(X)X=0$. Indeed, this is the case because   a routine computation shows that a linear combination of the exotic connections on $\U(n)$  $(n\geq 2)$ fails to  induce   a non-trivial derivation (nor satisfies (\ref{stc}) as we explained in  the proof of  Proposition \ref{CLG}).   The same time, the endomorphsim $\Lambda(X):=\sum_{i=1}^{r}((1-\al_{i})/2)\cdot \ad(X)|_{\fr{g}_{i}}$  is an equivariant  derivation which trivially verifies the condition $\Lambda(X)X=0$.
  \end{proof}

\section{Invariant metric connections with skew-torsion on naturally reductive spaces}
 Let $(M=G/K, g)$ be a connected naturally reductive Riemannian manifold.  We shall  study  $G$-invariant metric connections whose torsion is proportional to the torsion   of the canonical connection $\nabla^{c}$.  Let  $\fr{g}=\fr{k}\oplus\fr{m}$ be  a reductive decomposition and let $\Lambda^{g} :\fr{m}\to\fr{so}(\fr{m})$  be the Nomizu map of the Levi-Civita  connection. 
   \begin{proposition}\label{Ilkas} 
     $(a)$  For any $\al\in\bb{R}$ there is a bijective correspondence between   linear  $\Ad(K)$-equivariant maps  $\Lambda^{\al}: \fr{m}\to\fr{so}(\fr{m})$, defined by
     \begin{equation}\label{nal}
    \Lambda^{\al}(X)Y=\frac{1-\al}{2}[X, Y]_{\fr{m}}=(1-\al)\Lambda^{g}(X)Y, \quad \forall \ X, Y\in\fr{m},
     \end{equation}
       and   $G$-invariant metric connections $\nabla^{\al}$ on $T(G/K)$ with  skew-symmetric torsion $T^{\al}\in\Lambda^{3}(\fr{m})$ such that $T^{\al}=\al \cdot T^{c}$.
          
          \noindent $(b)$ If    the   Lie group $G$ is  compact and simple, then   the family  $\{\nabla^{\al} : \al\in\bb{R}\}$   is naturally induced    by a  bi-invariant   metric connection of   $G$.
               \end{proposition}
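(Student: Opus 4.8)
The plan is to treat the two parts separately, using throughout the Wang correspondence together with the torsion and curvature formulas (\ref{torsion}). For part $(a)$ I would first record that the natural reductivity of $(M=G/K,g)$ is equivalent to the Levi-Civita Nomizu map being $\Lambda^{g}(X)Y=\tfrac12[X,Y]_{\fr{m}}$, so that the map $\Lambda^{\al}:=(1-\al)\Lambda^{g}$ of (\ref{nal}) is well defined. Three checks then show that $\Lambda^{\al}$ is the Nomizu map of a $G$-invariant metric connection: it is $\Ad(K)$-equivariant because $X\mapsto[X,\cdot]_{\fr{m}}$ is (the splitting $\fr{g}=\fr{k}\oplus\fr{m}$ being $\Ad(K)$-stable and each $\Ad(k)$ a Lie algebra automorphism); it takes values in $\fr{so}(\fr{m})$ because natural reductivity makes $\langle[X,Y]_{\fr{m}},Z\rangle$ totally skew, hence skew in $(Y,Z)$; and by the first line of (\ref{torsion}) its torsion is
\[
T^{\al}(X,Y)=\tfrac{1-\al}{2}[X,Y]_{\fr{m}}-\tfrac{1-\al}{2}[Y,X]_{\fr{m}}-[X,Y]_{\fr{m}}=-\al[X,Y]_{\fr{m}}=\al\,T^{c}(X,Y),
\]
which is a $3$-form since $T^{c}$ is. This produces, for each $\al$, the asserted connection $\nabla^{\al}$ with $T^{\al}=\al T^{c}$.

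The remaining content of $(a)$ is the word \emph{bijective}. By Wang's theorem the assignment (Nomizu map)$\leftrightarrow$($G$-invariant connection) is already a bijection, restricting to one between $\fr{so}(\fr{m})$-valued Nomizu maps and metric connections; so it suffices to see that (\ref{nal}) singles out, for each $\al$, the \emph{unique} $G$-invariant metric connection whose torsion equals $\al T^{c}$. I would prove this by the standard fact that a metric connection is determined by its torsion: if $\Lambda,\Lambda'$ are two $\fr{so}(\fr{m})$-valued Nomizu maps with the same torsion, then $A:=\Lambda-\Lambda'$ satisfies $A(X)\in\fr{so}(\fr{m})$ and $A(X)Y=A(Y)X$, so the $3$-tensor $\langle A(X)Y,Z\rangle$ is symmetric in $(X,Y)$ and skew in $(Y,Z)$; a short index permutation then forces it to vanish, whence $A=0$. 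Since distinct values of $\al$ give distinct torsions whenever $T^{c}\neq0$ (i.e. outside the symmetric case $[\fr{m},\fr{m}]\subset\fr{k}$), the map $\al\mapsto\nabla^{\al}$ is injective, and $(a)$ follows.

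For part $(b)$ I would invoke Theorem \ref{cc}: when $G$ is compact and simple, viewed as $(G\times G)/\Delta G$, its bi-invariant metric connections form exactly the $1$-parameter family $\widetilde\nabla^{\al}$ with Nomizu map $\widetilde\Lambda^{\al}(X)Y=\tfrac{1-\al}{2}[X,Y]$ on $\fr{g}$. The key observation is that $\widetilde\Lambda^{\al}$ is $\Ad(G)$-equivariant, hence a fortiori $\Ad(K)$-equivariant, and that the projection $\mathrm{pr}_{\fr{m}}$ along $\fr{k}$ commutes with $\Ad(k)$ for $k\in K$; therefore $(X,Y)\mapsto\mathrm{pr}_{\fr{m}}\big(\widetilde\Lambda^{\al}(X)Y\big)$ is an $\Ad(K)$-equivariant map $\fr{m}\times\fr{m}\to\fr{m}$ and so, by Wang's theorem, the Nomizu map of a $G$-invariant connection on $G/K$. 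Comparing with (\ref{nal}),
\[
\mathrm{pr}_{\fr{m}}\big(\widetilde\Lambda^{\al}(X)Y\big)=\tfrac{1-\al}{2}[X,Y]_{\fr{m}}=\Lambda^{\al}(X)Y,\qquad X,Y\in\fr{m},
\]
so $\nabla^{\al}$ is precisely the connection induced from $\widetilde\nabla^{\al}$ by $\fr{m}$-projection along the reductive complement, for the same parameter $\al$; letting $\al$ vary gives the whole family. Simplicity is exactly what guarantees a single-parameter source family on $G$ matching the single parameter on $G/K$ (for non-simple $G$, Theorem \ref{AFHoll} would force the choice $\al_{1}=\cdots=\al_{r}=\al$).

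I expect the only genuine subtlety to lie in $(b)$, namely pinning down the correct meaning of ``induced'': the clean statement is at the level of Nomizu maps via $\fr{m}$-projection, and one should confirm that this is the standard inducing construction. The metric character is not a separate worry, since $\nabla^{\al}$ was already shown in $(a)$ to be metric for $g$; only if one wants metricity to be \emph{inherited} from $G$ must one take the inner product on $\fr{m}$ to be $\rho|_{\fr{m}}$, a multiple of $-B$ for $G$ simple. Part $(a)$, by contrast, is essentially bookkeeping once the uniqueness lemma for metric connections with prescribed torsion is isolated; that lemma, rather than any computation, is its conceptual core.
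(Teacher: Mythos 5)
Your proof is correct and follows essentially the same route as the paper: in $(a)$ the paper also combines the metric/skew-torsion condition $\Lambda(X)Y+\Lambda(Y)X=0$ with the torsion formula (\ref{torsion}) to force $\Lambda(X)Y=\tfrac{1-\al}{2}[X,Y]_{\fr{m}}$ (your abstract ``a metric connection is determined by its torsion'' lemma is the same computation packaged as a uniqueness statement), and in $(b)$ the paper likewise projects the bi-invariant family $\eta^{\al}$ of Theorem \ref{cc} onto $\fr{m}$ via Laquer's map $\eta\mapsto\pi_{*}\eta$, $(\pi_{*}\eta)(X,Y)=\eta(X,Y)_{\fr{m}}$, which is exactly your $\mathrm{pr}_{\fr{m}}\circ\widetilde\Lambda^{\al}$ construction. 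No gaps.
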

                    \begin{proof}    
  $(a)$ The direct statement is   well-known \cite{Agr03}. 
  The converse  is also very easy.  Because $(M=G/K, g)$ is naturally reductive with respect to $G$,     $\nabla^{\al}$ is a $G$-invariant metric connection with skew-torsion $T^{\al}\in\Lambda^{3}(\fr{m})$  if and only if   the corresponding Nomizu map, say  $\Lambda_{\fr{m}} : \fr{m}\to\fr{so}(\fr{m})$, is such that $\Lambda_{\fr{m}}(X)Y+\Lambda_{\fr{m}}(Y)X=0$ for any $X, Y\in\fr{m}$, see  \cite[Lem.~2.1]{Agr03}.    Because $T^{\al}=\al \cdot T^{c}$, a simple application of (\ref{torsion}) shows that   $\Lambda_{\fr{m}}(X)Y-\Lambda_{\fr{m}}(Y)X=(1-\al)[X, Y]_{\fr{m}}$, for any $X, Y\in\fr{m}$  and   the claim follows.   
       
 \noindent  $(b)$   
  In  \cite[Thm.~6.1]{Laq2} it is proved that  there is  a natural mapping 
\[
 \fr{aff}_{G\times G}(F(G))\to \fr{aff}_{G}(F(G/K)), \quad \eta\mapsto \pi_{*}\eta, \ \text{with} \  (\pi_{*}\eta)(X, Y):=\eta(X, Y)_{\fr{m}},
 \]
for any $X, Y\in\fr{m}$. 
Here, $\eta$ is a bi-invariant linear connection on $G$ and  $\pi_{*}\equiv d\pi_{e} : \fr{g}\to\fr{m}$  is the differential of $\pi$ at $e$.   If $G$ is compact and $\eta$ is a bi-invariant {\it metric} connection on $G$, then  the  induced $G$-invariant connection on $M=G/K$ will be also metric,  since the inner  product on $\fr{m}$ is the restriction of an $\Ad(G)$-invariant inner product of $\fr{g}$. In our case, and since $g$ has been assumed to be naturally reductive,  $\eta$ oughts to induce    a  $G$-invariant metric connection with  skew-torsion.
 For $G$ compact and simple, any bi-invariant metric connection is given by the map $\eta^{\al}(X, Y)=((1-\al)/2)[X, Y]$ for some  $\al\in\bb{R}$    (up to scale).  Consider  the composition $\pi_{*}\eta^{\al} : \fr{g}\times\fr{g}\to\fr{m}$ and write  $X=X_{\fr{m}}+X_{\fr{k}}$. Then, by restricting $\pi_{*}\eta^{\al}$ on $\fr{m}\times\fr{m}$ we obtain a well-defined bilinear map   $\lambda^{\al} : \fr{m}\times\fr{m}\to\fr{m}$   with 
$
\lambda^{\al}(X_{\fr{m}}, Y_{\fr{m}})=(\pi_{*}\eta^{\al})(X_{\fr{m}}, Y_{\fr{m}}):=\eta^{\al}(X_{\fr{m}}, Y_{\fr{m}})_{\fr{m}}=((1-\al)/{2})[X_{\fr{m}}, Y_{\fr{m}}]_{\fr{m}}.
$
 This is an $\Ad(K)$-equivariant map  satisfying  $\langle \lambda^{\al}(X_{\fr{m}}, Y_{\fr{m}}), Z_{\fr{m}}  \ \rangle+\langle Y_{\fr{m}}, \lambda^{\al}(X_{\fr{m}}, Z_{\fr{m}})\rangle=0$.      The associated Nomizu map $\Lambda^{\al} : \fr{m}\to\fr{so}(\fr{m})$  is  the   family   discussed in $(a)$.   \end{proof}

   We recall  now the case of a   symmetric space of Type I.   
    \begin{theorem}\textnormal{(\cite{Laq2, AFH})}\label{symspace}
 Let $(M=G/K, g)$ be  an (irreducible)   Riemannian symmetric space of Type I.  Then
 
 \noindent (a)
 If $\nabla$ is a $G$-invariant metric connection with torsion a multiple of the torsion of the canonical connection, then necessarily $\nabla\equiv \nabla^{c} (\equiv\nabla^{g})$. 
 
\noindent  (b) The space of $G$-invariant metric connections  consists   of  just the canonical connection $\nabla^{c}\equiv\nabla^{g}$.

  \end{theorem}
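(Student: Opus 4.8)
The plan is to separate the two assertions: part (a) is a degeneration of Proposition \ref{Ilkas}, while part (b) carries the real content and reduces to a representation-theoretic vanishing.

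For (a) I would use the symmetric-space relation $[\fr{m}, \fr{m}]\subset\fr{k}$, which forces $[X,Y]_{\fr{m}}=0$ for all $X,Y\in\fr{m}$. Hence the canonical torsion $T^{c}(X,Y)=-[X,Y]_{\fr{m}}$ vanishes identically, so $\nabla^{c}$ is a \emph{torsion-free} metric connection and therefore $\nabla^{c}\equiv\nabla^{g}$. In the notation of Proposition \ref{Ilkas} the whole family degenerates, since $\Lambda^{\al}(X)Y=\frac{1-\al}{2}[X,Y]_{\fr{m}}=0$, giving $\nabla^{\al}\equiv\nabla^{c}$ for every $\al$. Consequently, any $G$-invariant metric connection whose torsion is a multiple of $T^{c}$ has torsion $\al\cdot T^{c}=0$; being torsion-free and metric, it must equal the Levi-Civita connection $\nabla^{g}=\nabla^{c}$.

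For (b) I would first describe the space of \emph{all} $G$-invariant metric connections intrinsically. By the theorem of Wang recalled in Section \ref{prel}, such a connection is encoded by an $\Ad(K)$-equivariant Nomizu map $\Lambda_{\fr{m}}:\fr{m}\to\fr{so}(\fr{m})$, and the canonical connection corresponds to $\Lambda_{\fr{m}}=0$. The difference of any two metric connections lies again in $\Hom_{K}(\fr{m},\fr{so}(\fr{m}))$, so this space, via $\nabla\mapsto\Lambda_{\fr{m}}$, is identified with $\Hom_{K}(\fr{m},\fr{so}(\fr{m}))\cong\Hom_{K}(\fr{m},\Lambda^{2}\fr{m})$ (the isomorphism induced by the metric), with $\nabla^{c}$ mapping to $0$. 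The assertion is thus equivalent to the vanishing of this space. Since $M$ is of Type I, the isotropy module $\fr{m}$ is irreducible (and of real type), so by Schur's lemma $\dim\Hom_{K}(\fr{m},\Lambda^{2}\fr{m})$ equals the multiplicity of $\fr{m}$ inside $\Lambda^{2}\fr{m}$.

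The decisive and hardest step is to show that this multiplicity is zero, i.e. that $\fr{m}$ does not occur in the antisymmetric square $\Lambda^{2}\fr{m}$. Here I would invoke Laquer's analysis \cite{Laq2} of the isotypic components of $\fr{m}$ in $\fr{m}\otimes\fr{m}=\Ss^{2}(\fr{m})\oplus\Lambda^{2}(\fr{m})$: for every irreducible symmetric space of Type I each copy of $\fr{m}$ in $\fr{m}\otimes\fr{m}$ sits in the \emph{symmetric} part $\Ss^{2}(\fr{m})$ — and such copies occur only for a short list of spaces (related to $\SU(n)$), producing precisely the exotic non-metric affine connections — so that the antisymmetric multiplicity vanishes; then (a) completes the claim that the only member is $\nabla^{c}\equiv\nabla^{g}$. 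I expect this multiplicity computation to be the genuine obstacle. A purely structural argument from the simplicity of $\fr{g}$ and the relations $[\fr{m},\fr{m}]\subset\fr{k}$, $[\fr{k},\fr{m}]\subset\fr{m}$ does not suffice; moreover the tempting shortcut via the geodesic symmetry $s_{o}$, whose differential $-\Id_{\fr{m}}$ sends $\Lambda_{\fr{m}}$ to $-\Lambda_{\fr{m}}$, only shows that the \emph{$s_{o}$-invariant} connections reduce to $\nabla^{c}$ — and since $s_{o}\notin G$ in general, $G$-invariance does not force $s_{o}$-invariance, so the representation-theoretic input of \cite{Laq2, AFH} cannot be bypassed. (Note that the restriction to $G$ \emph{simple} is essential: for $\Ss^{3}=\SO(4)/\SO(3)$, where $\SO(4)$ is not simple, the cross product realizes a nonzero element of $\Hom_{\SO(3)}(\fr{m},\Lambda^{2}\fr{m})$, consistent with $\Ss^{3}$ being of Type II rather than Type I.)
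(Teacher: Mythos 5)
Your proposal is correct and matches the paper's treatment: the paper gives no proof of its own for this theorem but simply cites \cite{Laq2, AFH}, and your reduction — part (a) from $[\fr{m},\fr{m}]\subset\fr{k}$ forcing $T^{c}=0$, part (b) from Wang's correspondence plus Laquer's computation that $\fr{m}$ has multiplicity zero in $\Lambda^{2}(\fr{m})$ (the exceptional copies of $\fr{m}$ all lying in $\Ss^{2}(\fr{m})$) — is exactly the intended argument, including your correct observation that the geodesic-symmetry shortcut does not bypass the representation theory. The only blemish is the parenthetical claim that $\fr{m}$ is of real type, which fails for Hermitian symmetric spaces such as $\SU(n+1)/\U(n)$; this is harmless here, since the vanishing of $\Hom_{K}(\fr{m},\Lambda^{2}\fr{m})$ is equivalent to the vanishing of the multiplicity regardless of the type of $\fr{m}$.
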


  \begin{remark}\label{nonsymspaces}
  \textnormal{According to \cite[Thm.~1.2]{Olmos}, given a compact naturally reductive Riemannian manifold $(M=G/K, g)$ (locally irreducible)    the canonical connection is   {\it unique}   under the assumption that $M$ is not isometric to a sphere,    a real projective space, or a compact  simple  Lie group with a bi-invariant metric.    Viewing the sphere  $\Ss^{n}$ as a compact quotient  $M=G/K$,  this anomaly appears since  $G$ is not necessarily equal to the full isometry group ${\rm Iso}(M)$ or its connected component ${\rm Iso}_{0}(M)$, in contrast to a  symmetric space of Type I.   Actually, let $(M=G/K, g_{B})$ be an (effective) simply connected normal homogeneous   manifold with $G$ being a compact,  connected, simple  Lie group and assume that   the isotropy representation is (strongly) irreducible. Then   $G={\rm Iso}_{0}(M)$, unless $M=\G_2/\SU(3)=\Ss^{6}$ or $M=\Spin(7)/\G_2=\Ss^{7}$ where ${\rm Iso}_{0}(M, g_{B})=\SO(7), \SO(8)$, respectively (see    \cite[Thm.~17.1]{Wolf} or \cite[p.~623]{Wa1}). It is well-known that  there are more spheres that can be represented  as quotients of   distinct Lie groups \cite{Bes}. In particular, the theory of enlargements of transitive actions (developed by A. L. Oni\v{s}\v{c}ik)  describes   all simple compact Lie algebras $\fr{g}$ which can be written as a direct sum $\fr{g}=\fr{k}_{1}\oplus\fr{k}_{2}$ of two Lie subalgebras $\fr{k}_{1}, \fr{k}_{2}$ (see  \cite{Ke, KS}).  If $G$ is the compact  simply connected Lie group corresponding to $\fr{g}$ and $K_{1}, K_{2}\subset G$ are the Lie subgroups associated to   $\fr{k}_{1}, \fr{k}_{2}$, then it holds that $\fr{g}=\fr{k}_{1}\oplus\fr{k}_{2}$   if and only if  $K_{1}$ acts transitively on $G/K_{2}$.  Hence, in the Lie group level we have the identifications $G/K_{1}=K_{2}/(K_{1}\cap K_{2})$ (and $G/K_{2}=K_{1}/(K_{1}\cap K_{2})$).     Oni\v{s}\v{c}ik's list (for symmetric cosets)  contains several spheres.    Let us present them.
  \[
  \begin{tabular}{l| l| l| l| l}
 & $G/K_{1}$ & $\fr{p}$ &  $K_{2}/K_{1}\cap K_{2}$ & $\fr{m}$ \\
   \hline
  $\Ss^{4n-1}$ & $\SO(4n)/\SO(4n-1)$  &   irred. & $\Sp(n)/\Sp(n-1)$ & $\fr{m}_{1}\oplus\fr{m}_{2}$ \\
  $\Ss^{4n-1}$ & $\SO(4n)/\SO(4n-1)$ &  irred. &  $\Sp(n)\U(1)/\Sp(n-1)\U(1)$ & $\fr{m}_{1}\oplus\fr{m}_{2}$  \\
    $\Ss^{4n-1}$ & $\SO(4n)/\SO(4n-1)$ & irred. &  $\Sp(n)\Sp(1)/\Sp(n-1)\Sp(1)$ & $\fr{m}_{1}\oplus\fr{m}_{2}\oplus\fr{m}_{3}$ \\
  $\Ss^{2n-1}$ & $\SO(2n)/\SO(2n-1)$ &  irred. & $\U(n)/\U(n-1)$ & $\fr{m}_{1}\oplus\fr{m}_{2}$ \\
  $\Ss^{2n-1}$ & $\SO(2n)/\SO(2n-1)$ &  irred. & $\SU(n)/\SU(n-1)$ & $\fr{m}_{1}\oplus\fr{m}_{2}$ \\
  $\Ss^{6}$ & $\SO(7)/\SO(6)$ &  irred. & $\G_2/\SU(3)$ & irred.   \\
   $\Ss^{7}$ & $\SO(8)/\SO(7)$ &  irred. &  $\Spin(7)/\G_2$ & irred.   \\
    $\Ss^{15}$ & $\SO(16)/\SO(15)$ &  irred. &  $\Spin(9)/\Spin(7)$ &  $\fr{m}_{1}\oplus\fr{m}_{2}$ 
        \end{tabular}
  \]
 In this table,  any symmetric space $M=G/K_1$ is   isotropy irreducible, but for the presentations $K_{2}/(K_{1}\cap K_{2})$ only them  of $\Ss^{6}$ and  $\Ss^{7}$ are (strongly)  isotropy irreducible. Another fact that deserves our attention is that although the cosets $K_{2}/(K_{1}\cap K_{2})$ are diffeomorphic to a Riemannian symmetric space, namely a sphere,  the  pairs $(K_{2}, K_{1}\cap K_{2})$ are not necessarily symmetric.  For example 
   \[
  \Ss^{7}=\SO(8)/\SO(7)\cong \U(4)/\U(3) \cong   \Sp(2)\U(1)/\Sp(1)\U(1)\cong  \Spin(7)/\G_2,
  \] 
 \noindent   but taking a reductive decomposition $\fr{g}=\fr{k}\oplus\fr{p}$   the relation $[\fr{p}, \fr{p}]\subset\fr{k}$  holds only for the first presentation.   
   Due to this observation and since  $K_{2}\subset G$ and $M=G/K_{1}=K_{2}/(K_{1}\cap K_{2})$, one may expect  more $K_{2}$-invariant affine connections on $M$ than $G$-invariant connections. Let us examine this interesting  problem   for the irreducible cosets $K_{2}/(K_{1}\cap K_{2})$ appearing   above. }
    \end{remark}
  \begin{theorem}\label{THM2}
    The space of $\Spin(7)$-invariant affine (or metric) connections on the 7-sphere $\Ss^{7}=\Spin(7)/\G_2$ is 1-dimensional; it consists of the family 
     $\{\nabla^{\al} : \al\in\bb{R}\}$ described in Proposition \ref{Ilkas}, i.e.   $\dim_{\bb{R}}(\fr{aff}_{\Spin_{7}}(F(\Ss^{7}))=1$.
    Similarly, the space of  \ $\G_2$-invariant affine (or metric) connections on the sphere $\Ss^{6}=\G_2/\SU(3)$ consists of the same family $\nabla^{\al}$, but in this case the parameter $\al$ is a complex number  $\al\in\bb{C}$.  Hence $\dim_{\bb{R}}(\fr{aff}_{\G_{2}}(F(\Ss^{6}))=2$. 
            \end{theorem}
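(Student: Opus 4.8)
The plan is to reduce the computation of the space of invariant connections to a representation-theoretic multiplicity count, exactly as was done for the bi-invariant case in the proof of Theorem \ref{AFHoll}. Recall from H.~C.~Wang's theorem that for a coset $M=H/L$ the space $\fr{aff}_{H}(F(M))$ is isomorphic to $\Hom_{L}^{\bb{R}}(\fr{m}\otimes\fr{m}, \fr{m})$, where $\fr{m}\cong T_{o}M$ carries the isotropy representation of $L$. So for $\Ss^{7}=\Spin(7)/\G_2$ I would set $L=\G_2$ and $\fr{m}$ the 7-dimensional isotropy module, while for $\Ss^{6}=\G_2/\SU(3)$ I would set $L=\SU(3)$ and $\fr{m}$ the 6-dimensional isotropy module. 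The whole problem then becomes the determination of $\dim_{\bb{R}}\Hom_{L}^{\bb{R}}(\fr{m}\otimes\fr{m}, \fr{m})$, i.e. the multiplicity of $\fr{m}$ inside $\fr{m}\otimes\fr{m}$ as an $L$-module.

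The key steps I would carry out are the following. First, identify the isotropy module precisely: for $\Spin(7)/\G_2$ the module $\fr{m}$ is the unique $7$-dimensional irreducible real representation of $\G_2$ (the representation on the imaginary octonions), and for $\G_2/\SU(3)$ the module $\fr{m}$ is the $6$-dimensional real representation of $\SU(3)$ underlying the complex standard representation $\bb{C}^{3}$ (together with its conjugate). Second, decompose the tensor square $\fr{m}\otimes\fr{m}=\Sym^{2}(\fr{m})\oplus\Lambda^{2}(\fr{m})$ into irreducibles and count copies of $\fr{m}$. For $\G_2$ acting on the $7$-dimensional module, the standard decomposition gives $\Lambda^{2}(\fr{m})\cong\fr{g}_2\oplus\fr{m}$, so $\fr{m}$ occurs exactly once in $\Lambda^{2}$, while $\Sym^{2}(\fr{m})$ decomposes into the trivial module plus the $27$-dimensional irreducible and contains no copy of $\fr{m}$; hence the multiplicity is one and the space is $1$-dimensional over $\bb{R}$, exhausted by the skew-symmetric family $\{\nabla^{\al}:\al\in\bb{R}\}$ of Proposition \ref{Ilkas}. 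For $\SU(3)$ acting on $\fr{m}\cong\bb{R}^{6}$, I would complexify and work with $\fr{m}\otimes\bb{C}\cong V\oplus\overline{V}$ where $V=\bb{C}^{3}$; the point is that $\fr{m}$ (equivalently $V\oplus\overline{V}$) appears with total real multiplicity two in $\fr{m}\otimes\fr{m}$, which forces a $2$-dimensional real space of invariant connections and explains why the parameter $\al$ becomes complex. Third, I would verify that every such invariant affine connection is automatically metric and has torsion a multiple of $T^{c}$, so that the affine and metric spaces coincide and are both described by the family $\nabla^{\al}$; this follows because the only invariant symmetric bilinear maps contribute nothing (no copy of $\fr{m}$ in $\Sym^{2}$ for the $\Spin(7)$ case), and in the $\G_2$ case the complex scaling preserves the skew-symmetric, hence skew-torsion, character of the connection.

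The main obstacle will be the $\Ss^{6}=\G_2/\SU(3)$ case, where the reality structure is subtle. Over $\bb{R}$ the module $\fr{m}$ is irreducible but of complex type, so $\Hom_{\SU(3)}^{\bb{R}}(\fr{m}\otimes\fr{m},\fr{m})$ is naturally a module over $\End_{\SU(3)}^{\bb{R}}(\fr{m})\cong\bb{C}$, and it is this complex-linear structure that produces the genuinely $2$-dimensional real family and the complex parameter $\al$. I would have to argue carefully, via Schur's lemma for representations of complex type, that the commutant is $\bb{C}$ and that multiplication by $i$ acts on the space of invariant bilinear maps, thereby upgrading the real $1$-parameter family to a complex one; the counting must be done so as not to double-count after complexification. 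Concretely, I expect to decompose $V\otimes V$, $V\otimes\overline{V}$, and $\overline{V}\otimes\overline{V}$ as $\SU(3)$-modules, locate the copies of $V$ and $\overline{V}$, and then reassemble the real homomorphism space, taking the invariance of the octonionic/cross-product structures into account. Once the multiplicity count and the reality analysis are settled, identifying the resulting connections with $\{\nabla^{\al}\}$ of Proposition \ref{Ilkas} and reading off $\dim_{\bb{R}}\fr{aff}=1$ respectively $2$ is immediate.
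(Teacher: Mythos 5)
Your proposal follows essentially the same route as the paper: invoke Wang's theorem to identify $\fr{aff}_{L}(F(M))$ with $\Hom_{L}^{\bb{R}}(\fr{m}\otimes\fr{m},\fr{m})$, decompose $\Lambda^{2}(\fr{m})$ and $\Sym^{2}(\fr{m})$ as $\G_2$- (resp. $\SU(3)$-) modules to find a single copy of $\fr{m}$ sitting in $\Lambda^{2}(\fr{m})$, apply Schur's lemma to identify the intertwiner with a multiple of the projected Lie bracket, and account for the complex-type commutant in the $\SU(3)$ case to get the complex parameter. The paper carries out the same multiplicity count (complexifying $\fr{m}$ for $\Ss^{6}$ exactly as you suggest) and settles the metric statement by noting the invariant metric is unique up to scale, which is a marginally cleaner way to close than your scaling argument, but the proofs coincide in substance.
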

\begin{proof}
 The non-symmetric presentations of $\Ss^{6}$ and $\Ss^{7}$ are still (strongly) isotropy irreducible. Therefore, in order to compute  the dimensions of the spaces   $\fr{aff}_{\G_{2}}\big(F(\G_{2}/\SU(3))\big)$ and $\fr{aff}_{\Spin(7)}\big(F(\Spin(7)/
  \G_{2})\big)$, it is sufficient to find the multiplicity of the corresponding isotropy representation  $\fr{m}$ inside $\fr{m}\otimes\fr{m}=\Lambda^{2}(\fr{m})\oplus \Sym^{2}(\fr{m})$. However,  we need now to view $\fr{m}$ as a $\SU(3)$- (resp. $\G_2$-) module.      
      Consider first the  7-sphere $\Ss^{7}\subset \bb{R}^{8}$ and  identify $\bb{R}^{8}\cong \bb{O}$, where $\bb{O}$ are  the Cayley numbers.  We view $\G_2\cong\Aut(\bb{O})$ as a subgroup  of $\Spin(7)\subset\Cl(\bb{R}^{7})$ preserving the spinor $\psi_{0}=(1, 0, \ldots, 0)^{t}\in\Delta_{7}$, where $\Delta_{7}:=\bb{R}^{8}$ is the 8-dimensional  spin representation of $\Spin(7)$.  Because $\Spin(7)$ acts transitively on $\Ss^{7}$ we get the diffeomorphism $\Ss^{7}\cong \Spin(7)/\G_2$, see  \cite{FKMS}.   As usual,    we  write  $V^{a, b}$ for the  irreducible representation of $\G_2$  corresponding to highest weight  $(a, b)$, where both $a, b$ are non-negative integers; for example $V^{0, 0}\cong\bb{R}$ is the trivial representation, $\phi_{7}:=V^{1, 0}\cong\bb{R}^{7}$  is the standard representation  of $\G_2$ and $V^{0, 1}\cong\fr{g}_{2}$ is its adjoint representation. 
         Let  now $\fr{spin}(7)=\fr{g}_{2}\oplus\fr{m}$ be a reductive decomposition.  The isotropy representation  $\fr{m}$ coincides with the standard representation $\fr{m}\cong\phi_{7}=\{X \lrcorner \omega : X\in\bb{R}^{7}\}\cong\bb{R}^{7}$, where  $\omega$ states for the (generic) 3-form on $\bb{R}^{7}$ preserved by $\G_2$, see  \cite{FKMS, FrIv}. For the (real) $\G_2$-modules $\Lambda^{2}(\fr{m})$ and $\Sym^{2}(\fr{m})$ we get the decompositions (we use the Lie software package, for $\Lambda^{2}(\fr{m})$ see also \cite{FKMS, FrIv}):
  \[
  \Lambda^{2}(\fr{m}) \cong\fr{so}(7)= V^{0, 1}\oplus V^{1, 0}= \fr{g}_{2}\oplus\phi_{7}=\fr{g}_{2}\oplus\fr{m},  \quad \Sym^{2}(\fr{m})=V^{2, 0}\oplus \bb{R},
  \]
  where $V^{2, 0}\cong \Ss^{2}_{0}\bb{R}^{7}$  with $\dim V^{2, 0}=27$. Thus, there is only one copy of $\fr{m}$ inside the $\G_2$-module $\fr{m}\otimes\fr{m}$, lying in $\Lambda^{2}(\fr{m})$. In other words, there is skew-symmetric bilinear $\Ad(\G_2)$-equivariant map $\eta : \fr{m}\times\fr{m}\to\fr{m}$ which induces a 1-dimensional  family of $\Spin(7)$-invariant affine connections on $\Ss^{7}$. Because $\fr{m}$ is irreducible,  Schur's lemma tells us that $\eta$ must be a multiple of the Lie bracket, say $\eta(X, Y)=\frac{(1-\al)}{2}\cdot [X, Y]_{\fr{m}}$ for some $\al\in\bb{R}$, with  $X, Y\in\fr{m}$. This defines the family $\{\nabla^{\al} : \al\in\bb{R}\}$ discussed in  Proposition \ref{Ilkas}.
      We  treat now  the 6-sphere. Recall  that $\G_2$ preserves the imaginary octonions  ${\rm Im}(\bb{O})\cong\bb{R}^{7}$ and acts transitively on  $\Ss^{6}\subset {\rm Im}(\bb{O})$ with stabilizer   diffeomorphic to $\SU(3)$, i.e. $\Ss^{6}\cong \G_2/\SU(3)$, see  \cite[Lemma.~5.1]{Ke}.    The weights of $\SU(3)$  are also given by pairs of non-negative  integers  $(a, b)$ and  irreducible  $\SU(3)$-representations will be labeled again by $V^{a, b}$.  In particular,   it is $\dim_{\bb{C}} V^{a, b}=\frac{1}{2}(a+1)(b+1)(a+b+2)$. 
    Obviously, $V^{1, 0}\cong\bb{C}^{3}:=\mu_{3}$ is the standard (complex) representation of $\SU(3)$, $V^{0, 1}\cong\overline{\bb{C}^{3}}=\overline{\mu_{3}}$ is its conjugate and $V^{1, 1}\cong\fr{su}(3)^{\bb{C}}$ is the  complexified  adjoint representation. 
   Let  $\fr{g}_{2}=\fr{su}(3)\oplus\fr{m}$ be a reductive decomposition.  It follows that $\fr{m}=[\mu_{3}]_{\bb{R}}$, where for a complex representation $V$ we denote by $[V]_{\bb{R}}$ the underlying real representation (whose real dimension is twice the complex dimension of $V$).  Thus, it  is more convenient to use  the complexified isotropy representation, which  splits  into two conjugate (inequivalent) submodules: $\fr{m}\otimes_{\bb{R}}\bb{C}=\mu_{3}\oplus\overline{\mu}_{3}=\bb{C}^{3}\oplus\overline{\bb{C}^{3}}$. Then, for the $\SU(3)$-module $\Lambda^{2}(\fr{m})\otimes_{\bb{R}}\bb{C}$ we get
 \begin{eqnarray*}
  \Lambda^{2}(\fr{m})\otimes_{\bb{R}}\bb{C}&=& \Lambda^{2}(\fr{m}^{\bb{C}})=\Lambda^{2}(\bb{C}^{3}\oplus\overline{\bb{C}^{3}})=\Lambda^{2}(\bb{C}^{3})\oplus\Lambda^{2}(\overline{\bb{C}^{3}})\oplus (\bb{C}^{3}\otimes \overline{\bb{C}^{3}})=(V^{1, 0}\oplus V^{0, 1})\oplus V^{1, 1}\oplus \bb{C},
  \end{eqnarray*}
  since $\bb{C}\cong V^{0, 0}$, $\Lambda^{2}(\bb{C}^{3})\cong\Lambda^{2}(\mu_{3})\cong V^{1, 0}=\mu_{3}$ (see also \cite[p.~125]{Ke2}) and $\Lambda^{2}(\overline{\bb{C}^{3}})\cong \overline{\Lambda^{2}(\bb{C}^{3})}\cong \overline{V^{1, 0}}\cong V^{0, 1}=\overline{\mu_{3}}$.   Hence we finally conclude that 
    \[
  \Lambda^{2}(\fr{m})=[V^{1, 0}]_{\bb{R}}\oplus\fr{su}(3)\oplus\bb{R}=[\mu_{3}]_{\bb{R}}\oplus\fr{su}(3)\oplus\bb{R}=\fr{m}\oplus\fr{su}(3)\oplus\bb{R}.
  \]
Under the action of $\SU(3)$,   we also compute
  \begin{eqnarray*}
   \Sym^{2}(\fr{m})\otimes_{\bb{R}}\bb{C}&=&\Sym^{2}(\fr{m}^{\bb{C}})=\Sym^{2}(\bb{C}^{3}\oplus\overline{\bb{C}^{3}})=\Sym^{2}(\bb{C}^{3})\oplus \Sym^{2}(\overline{\bb{C}^{3}})\oplus (\bb{C}^{3}\otimes \overline{\bb{C}^{3}})\\
   &=&(V^{2, 0}\oplus V^{0, 2})\oplus V^{1, 1}\oplus\bb{C},
   \end{eqnarray*}   
      with $V^{2, 0}\cong\Sym^{2}(\bb{C}^{3})$.  
      Consequnetly,   the decomposition of    $\Sym^{2}(\fr{m})$ into irreducible $\SU(3)$-submodules  is given by   $\Sym^{2}(\fr{m})=[V^{2, 0}]_{\bb{R}}\oplus\fr{su}(3)\oplus\bb{R}$.
Similarly with $\Ss^{7}$,  the copy of $\fr{m}$ inside $\Lambda^{2}(\fr{m})$ defines a skew-symmetric bilinear $\Ad(\SU(3))$-equivariant map $\eta : \fr{m}\times\fr{m}\to\fr{m}$, which  by Schur's lemma, must be  proportional to the Lie bracket restricted on $\fr{m}$.  In fact, in this case  the parameter $\al$ is a complex number, i.e.  $\eta(X, Y)=\frac{(1-\al)}{2}\cdot [X, Y]_{\fr{m}}$ for some $\al\in\bb{C}$. This  
proves the claim for the affine case. Now, the assertion about the metric property is simple.  Any $\Spin(7)$-invariant metric on  $\Ss^{7}=\Spin(7)/\G_2$ must be  a multiple of the negative of the Killing form of $\Spin(7)$, restricted on $\fr{m}$. Hence, the family $\{\nabla^{\al} : \al\in\bb{R}\}$ is necessarily metric.  Similarly for  $\Ss^{6}=\G_2/\SU(3)$. 
      \end{proof}
        \begin{remark}\label{notinv}
  \textnormal{The embedding of $\Ss^{7}$ inside the spin representation $\Delta_{7}\cong\bb{R}^{8}$ induces on $\Ss^{7}$  an affine  connection $\nabla^{\rm flat}$ which is  metric and has (non-parallel) skew-torsion $T^{\rm flat}\neq 0$ \cite{FLAT}.  The 7-sphere   endowed with this connection and a Riemannian metric of constant sectional curvature becomes   flat, and   together with  the compact  (simple)  Lie groups  endowed with a bi-invariant metric and  one of the $\pm 1$-connections,  exhaust all     Riemannian manifolds carrying a flat metric connection  with non-trivial skew-torsion (Cartan-Schouten theorem).   Viewing   the sphere $\Ss^{7}=\Spin(7)/\G_2$ as a $\G_2$-manifold, I.~Agricola and Th.~Friedrich \cite[pp.~7--9]{FLAT} described this connection  as a connection    whose torsion $T^{\rm flat}$   does not have {\it constant} coefficients. Hence, $\nabla^{\rm flat}$ is {\it not} an invariant connection and this is the reason that it does not appear in Theorem \ref{THM2} (for example, the difference $\nabla^{\rm flat}-\nabla^{c}$ is not an $\Ad(\G_2)$-invariant tensor and hence given a  reductive decomposition $\fr{spin}(7)=\fr{g}_{2}\oplus\fr{m}$, 
  the relation    $T^{\rm flat}=\al\cdot T^{c}$ fails for any $\al$).\footnote{The author thanks I. Agricola for this remark.} }
    \end{remark}
      \section{$\nabla^{\al}$-Einstein naturally reductive manifolds with skew-torsion}\label{CST}
In this section we describe the geometry of  a naturally reductive manifold  endowed with a family of invariant metric connections whose torsion is such that $T^{\al}=\al\cdot T^{c}$, for some parameter $\al$. We are mainly  interested to answer the following question: {\it For which values of $\al$  $(\al\neq 0)$  the induced  Ricci tensor $\Ric^{\al}$ is ``proportional'' to the naturally reductive metric $g$?} 

  \subsection{$\nabla$-Einstein manifolds} With the aim to give a precise definition  of  a $\nabla$-Einstein structure, it is useful to    recall  identities of    the Ricci tensor and the scalar curvature of   a metric connection $\nabla$ with skew-symmetric torsion $0\neq T\in\Lambda^{3}(T^{*}M)$.  We limit ourselves only in a few details and for a general picture  we refer to \cite{FrIv, AFer, Srni}.   As usual, we write $g(\nabla_{X}Y, Z)=g(\nabla^{g}_{X}Y, Z)+\frac{1}{2}T(X, Y, Z)$,     where $\nabla^{g}$ denotes the Levi-Civita connection of the fixed Riemannian manifold $(M^{n}, g)$.   
Let $\{e_{1}, \ldots, e_{n}\}$ be a (local) orthonormal frame of $M$.  In terms of   the co-differential  $\delta T$ and  the  normalized length  $\|T\|^{2}:=(1/6)\sum_{i, j}g(T(e_{i}, e_{j}), T(e_{i}, e_{j}))$  of     $T$,  one has the formulas: $\Sca=\Sca^{g}-\frac{3}{2}\|T\|^{2}$ and
 \[
        \Ric(X, Y)=\Ric^{g}(X, Y)-\frac{1}{4}\sum_{i=1}^{n}g(T(e_{i}, X), T(e_{i}, Y))-\frac{1}{2}(\delta^{g}T)(X, Y).
        \]
        The co-differential  (with respect to $\nabla$) of a $n$-form $\omega$ on $M$   is given  by $\delta^{\nabla}\omega:= -\sum_{i}e_{i}\lrcorner \nabla_{e_{i}} \omega$. For the torsion  3-form it holds that  $\delta^{\nabla}T=\delta^{g}T$.
 We emphasize that the Ricci tensor   of $\nabla$ is not  necessarily  symmetric;  it decomposes into a  symmetric  and antisymmetric part $\Ric=\Ric_{S}+\Ric_{A}$, given by
   \[
   \Ric_{S}(X, Y):=\Ric^{g}(X, Y)-\frac{1}{4}\sum_{i=1}^{n}g(T(e_{i}, X), T(e_{i}, Y)),  \]
 and $\Ric_{A}(X, Y):=-\frac{1}{2}(\delta^{g}T)(X, Y)$, respectively.
   In analogy to  compact  Einstein manifolds, $\nabla$-Einstein manifolds  with skew-torsion   admit a variational approach based on  the  functional 
    \[
    (g, T)\mapsto \int_{M}\Big(\Sca-2\Lambda\Big)d\vol_{g},
    \]
     where $\Lambda$ is a  constant.  In particular, by \cite[Thm.~2.1]{AFer} it is known  that  critical points of this functional are pairs $(g, T)$ as above,  satisfying the equation 
     \[
   -\Ric_{S}+\frac{1}{2}\Sca\cdot g-\Lambda\cdot g=0.
   \]
   For this reason,   one has the following formal definition:
        \begin{Definition}
    We call a  $4$-tuple $(M^{n}, g, \nabla, T)$  a $\nabla$-Einstein manifold with skew-torsion $T$, or in short, a $\nabla$-Einstein manifold,  if the symmetric part of the Ricci tensor of $\nabla$ satisfies the equation $\Ric_{S}=\frac{\Sca}{n}g$.
    \end{Definition}
In contrast to the Riemannian case, for  a $\nabla$-Einstein manifold  the scalar curvature   is not necessarily constant, see \cite{AFer}.  For parallel torsion $T$  one has $\delta^{\nabla}T=0$  and  the Ricci tensor  becomes symmetric  $\Ric=\Ric_{S}$. If in addition $\delta\Ric^{g}=0$, then the scalar curvature is  constant, similarly with  an Einstein manifold. This is the case for any $\nabla$-Einstein manifold $(M, g, \nabla, T)$ with parallel skew-torsion \cite[Prop.~2.7]{AFer}.

  \subsection{The $\nabla^{\al}$-Einstein condition on naturally reductive spaces}\label{NEWPAR}
 From now on   we assume that  $(M^{n}=G/K,  g)$ $(n\geq 3)$ is a  naturally reductive manifold,    endowed with an effective transitive action  of  a connected Lie group $G$ and a reductive decomposition $\fr{g}=\fr{k}\oplus\fr{m}$ such that $\fr{g}=\tilde{\fr{g}}:=\fr{m}+[\fr{m}, \fr{m}]$, see   \cite[p.~569]{Wa1} or \cite[p.~196]{Bes}.
  \begin{remark}\label{afftrans}
\textnormal{Given a homogeneous space $M=G/K$ with a reductive decomposition $\fr{g}=\fr{k}\oplus\fr{m}$,  the ideal $\tilde{\fr{g}}:=\fr{m}+[\fr{m}, \fr{m}]$   is identified with the Lie algebra $\fr{tr}(\nabla^{c})$ of the transvection group ${\rm Tr}(\nabla^{c})$ of the canonical connection associated to    $\fr{m}$ \cite[Rem.~4.1]{Reg2}.   The group ${\rm Tr}(\nabla^{c})$ is a connected and normal subgroup of ${\rm Aff}_{0}(\nabla^{c})$ (the connected component of the affine group of $\nabla^{c}$), which  consists of all $\nabla^{c}$-affine transformations that preserve any $\nabla^{c}$-holonomy subbundle  of the orthonormal frame bundle.  Hence,   our assumption  equivalently says  that $M=G/K$ is a  naturally reductive manifold with respect to the decomposition $\fr{g}=\fr{k}\oplus\fr{m}$,  where $G={\rm Tr}(\nabla^{c})$ is the group of transvections  of the canonical connection $\nabla^{c}$ associated to $\fr{m}$.  In general ${\rm Tr}(\nabla^{c})\subset G$.  However,  if $M=G/K$ is a compact normal homogeneous space, then $G={\rm Tr}(\nabla^{c})$ \cite[Prop.~4.2]{Reg2} and hence any such space gives rise to a (compact) homogeneous manifold satisfying our assumption.}
\end{remark}
 Next we shall  use the $\Ad (G)$-invariant extension $Q$ of the (naturally reductive) inner product $\langle \ , \  \rangle$ on whole $\fr{g}$.  Let  $Q_{\fr{k}}$ be the restriction of $Q$ on $\fr{k}$, i.e. $Q_{\fr{k}}(X, Y)=Q(X_{\fr{k}}, Y_{\fr{k}})$ where $X_{\fr{k}}$ is the $\fr{k}$-component of $X\in\fr{g}$.   A  customary trick  is to associate with $Q_{\fr{k}}$ and the isotropy representation $\chi_{*} : \fr{k}\to\fr{so}(\fr{m})$,  the Casimir element; this is the linear operator  $C_{\chi}\equiv C_{\chi, Q_{\fr{k}}} : \fr{m}\to\fr{m}$ defined by $C_{\chi}:=-\sum_{q}^{\dim\fr{k}}\chi_{*}(k_{q})\circ\chi_{*}(k'_{q})$,    where $\{k_{q}, k'_{q}\}$ are dual bases of $\fr{k}$ with respect to $Q_{\fr{k}}$.   If $\chi$ is an irreducible representation, then $C_{\chi}$ is a scalar operator. 
  We introduce also the symmetric bilinear map $A$ on $\fr{m}$, given by $A(X, Y)=\langle C_{\chi}X, Y\rangle$  for any $X, Y\in\fr{m}$  and  we denote by $B$  the negative of  the restriction of the Killing form of $\fr{g}$ on $\fr{m}$. Then, the following relations are standard  (see for example \cite{Wa1, Bes, Agr03}) 
\begin{equation}\label{use1}
A(X, Y)=\sum_{j}Q_{\fr{k}}([X, Z_{j}], [Y, Z_{j}]),  \quad    B(X, Y)=\sum_{i}\langle [X, Z_{i}]_{\fr{m}}, [Y, Z_{i}]_{\fr{m}}\rangle+2A(X, Y).
\end{equation} 
    Consider  now  the family of $G$-invariant  metric connections $\nabla^{\al}$ described in Proposition \ref{Ilkas}.  
 We   present    formulas for the Ricci tensor and the scalar curvature associated to $\nabla^{\al}$ (see also \cite[Lem.~2.2, Thm.~4.4]{Agr03} for similar expressions).
   \begin{theorem}\label{NEWLIFE}
     The Ricci curvature of the naturally reductive Riemannian manifold $(M=G/K, g)$ endowed with the  family  $\{\nabla^{\al} : \al\in\bb{R}\}$  is given by
  \begin{eqnarray*}
  \Ric^{\al}(X, Y)&=& \frac{1-\al^{2}}{4}\sum_{1\leq i\leq n}\langle [X, Z_{i}]_{\fr{m}}, [Y, Z_{i}]_{\fr{m}}\rangle+A(X, Y)= \frac{1-\al^{2}}{4}B(X, Y)+ \frac{1+\al^{2}}{2}A(X, Y).
  \end{eqnarray*}
  The corresponding scalar curvature $\Sca^{\al} : M\to\bb{R}$  has the form 
  \begin{eqnarray*}
  \Sca^{\al}&=&\frac{1-\al^{2}}{4}\sum_{1\leq i, j\leq n}\|[Z_{i}, Z_{j}]_{\fr{m}}\|^{2}+\sum_{1\leq i, j\leq n}Q_{\fr{k}}([Z_{i}, Z_{j}], [Z_{i}, Z_{j}])\\
  &=&\frac{1-\al^{2}}{4}\sum_{1\leq i, j\leq n}\|[Z_{i}, Z_{j}]_{\fr{m}}\|^{2}+\sum_{1\leq i \leq n}A(Z_{i}, Z_{i}),
  \end{eqnarray*}
  where  $\|Z\|:=\sqrt{\langle Z, Z\rangle}$ is  the norm of a vector $Z\in\fr{m}$ with respect to  $\langle \ , \  \rangle$.
    \end{theorem}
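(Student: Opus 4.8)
The plan is to obtain $\Ric^{\al}$ as the trace of the curvature operator of $\nabla^{\al}$, feeding the explicit Nomizu map $\Lambda^{\al}(X)Y=\tfrac{1-\al}{2}[X,Y]_{\fr m}$ of \eqref{nal} into the curvature formula of \eqref{torsion}. Fix a $Q$-orthonormal basis $\{Z_i\}$ of $\fr m$, abbreviate $c:=\tfrac{1-\al}{2}$, and adopt the convention $\Ric^{\al}(X,Y)=\sum_i\langle R^{\al}(Z_i,X)Y,Z_i\rangle$. Substituting $\Lambda^{\al}$ into \eqref{torsion} decomposes $R^{\al}(Z_i,X)Y$ into a commutator part $c^2\big([Z_i,[X,Y]_{\fr m}]_{\fr m}-[X,[Z_i,Y]_{\fr m}]_{\fr m}\big)$, a Nomizu part $-c\,[[Z_i,X]_{\fr m},Y]_{\fr m}$, and an isotropy part $-\ad([Z_i,X]_{\fr k})Y$, which I would trace separately.

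The commutator and Nomizu parts are handled by natural reductivity, i.e.\ the total antisymmetry of $(X,Y,Z)\mapsto\langle[X,Y]_{\fr m},Z\rangle$. The term $c^2\sum_i\langle[Z_i,[X,Y]_{\fr m}]_{\fr m},Z_i\rangle$ vanishes because antisymmetry turns it into a sum involving $[Z_i,Z_i]_{\fr m}=0$; the remaining commutator term and the Nomizu term each reduce, after a single application of total antisymmetry, to $\sum_i\langle[X,Z_i]_{\fr m},[Y,Z_i]_{\fr m}\rangle$ with coefficients $-c^2$ and $+c$ respectively. The decisive algebraic simplification is then \[ c(1-c)=\tfrac{1-\al}{2}\cdot\tfrac{1+\al}{2}=\tfrac{1-\al^2}{4}, \] which produces exactly the stated coefficient. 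For the isotropy part I would use the $\ad$-invariance of $Q$ together with the orthogonality $\fr k\perp_Q\fr m$ to rewrite \[ -\sum_i\langle[[Z_i,X]_{\fr k},Y],Z_i\rangle=\sum_i Q\big([Z_i,X]_{\fr k},[Z_i,Y]_{\fr k}\big)=\sum_i Q_{\fr k}([X,Z_i],[Y,Z_i]), \] which is precisely $A(X,Y)$ by the first identity of \eqref{use1}. Assembling the three contributions gives $\Ric^{\al}(X,Y)=\tfrac{1-\al^2}{4}\sum_i\langle[X,Z_i]_{\fr m},[Y,Z_i]_{\fr m}\rangle+A(X,Y)$.

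The second expression for $\Ric^{\al}$ then follows at once from the second identity of \eqref{use1}, $\sum_i\langle[X,Z_i]_{\fr m},[Y,Z_i]_{\fr m}\rangle=B(X,Y)-2A(X,Y)$: substituting and collecting the $A$-terms yields the coefficient $1-\tfrac{1-\al^2}{2}=\tfrac{1+\al^2}{2}$. To reach the scalar curvature I would simply evaluate the trace $\Sca^{\al}=\sum_i\Ric^{\al}(Z_i,Z_i)$ on the first form, turning the single sum into $\sum_{i,j}\|[Z_i,Z_j]_{\fr m}\|^2$ and using $\sum_i A(Z_i,Z_i)=\sum_{i,j}Q_{\fr k}([Z_i,Z_j],[Z_i,Z_j])$ to display both stated forms simultaneously. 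Note that the resulting $\Ric^{\al}$ is manifestly symmetric, in agreement with $\delta^g T^{\al}=\al\,\delta^g T^c=0$, the latter because $T^c$ is $\nabla^c$-parallel; hence $\Ric^{\al}=\Ric_S^{\al}$ and the above trace genuinely computes $\Sca^{\al}$.

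I expect the main obstacle to be the sign bookkeeping in the isotropy contribution: one must track the signs through the two uses of $\ad$-invariance of $Q$ and confirm that the $\fr k$-valued brackets reassemble into $A(X,Y)$ rather than a multiple thereof, which hinges on using $\fr k\perp_Q\fr m$ to discard the $\fr m$-components $[Z_i,Y]_{\fr m}$ at the right moment. By contrast the natural-reductivity cancellations in the commutator and Nomizu parts are routine once total antisymmetry is invoked, and the passage to the second form and to $\Sca^{\al}$ via \eqref{use1} is purely formal.
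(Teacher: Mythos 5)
Your computation is correct: the three contributions from the curvature formula in (\ref{torsion}) trace out exactly as you describe, the key cancellation $c(1-c)=\tfrac{1-\al^2}{4}$ produces the stated coefficient, and the isotropy term reassembles into $A(X,Y)$ via the $\ad$-invariance of $Q$ and $Q(\fr{k},\fr{m})=0$, with the sanity check that $\al=0$ recovers the classical naturally reductive Ricci formula. The paper states Theorem \ref{NEWLIFE} without proof, deferring to the analogous expressions in \cite{Agr03}; your direct trace computation is precisely the argument being referenced there, so the approach is the same.
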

   For a moment,  notice that 
    \[
\Ric^{0}(X, Y)=\frac{1}{4}\sum_{i=1}^{n}\langle [X, Z_{i}]_{\fr{m}}, [Y, Z_{i}]_{\fr{m}}\rangle+A(X, Y)=\frac{1}{4}B(X, Y)+\frac{1}{2}A(X, Y),
\]
 which is the classical formula of the Riemannian Ricci tensor of a   naturally reductive  space $(M=G/K, g)$ with $\fr{g}=\tilde{\fr{g}}$, see \cite[Prop.~1.9, pp.~569--570]{Wa1} or \cite[(7.89b)]{Bes}.  For $G$  compact and semi-simple, one can replace $Q_{\fr{k}}$ with the restriction $B_{\fr{k}}:=B|_{\fr{k}\times\fr{k}}$; then $A(X, Y):=B(C_{\chi, B_{\fr{k}}}X, Y)$  and   in this case it is clear that the Killing metric $g_{B}$ is Einstein if and only if $C_{\chi, B_{\fr{k}}}=\mu\cdot\Id$ for some constant $\mu$. In contrast to the compact case and $C_{\chi, B_{\fr{k}}}$,    the restriction $Q_{\fr{k}}$ is not necessarily positive definite. Thus, in the general case the Casimir operator $C_{\chi, Q_{\fr{k}}}$ can have eigenvalues of either sign;  the same is true for the Ricci tensor and therefore the Einstein condition for the Killing metric is not  anymore equivalent to the relation $C_{\chi, Q_{\fr{k}}}=\mu\cdot\Id$. 
 
 Now,  it is well-known that $\delta^{\al} T^{\al}=0$ for any $\al\in\bb{R}$, see \cite{Agr03}. Therefore, a simple combination with Theorem \ref{NEWLIFE} shows that
     \begin{corollary}\label{symSS}
   The Ricci tensor $\Ric^{\al}$ associated to the family $\nabla^{\al}$ is   symmetric for any $\al\in\bb{R}$, i.e. $\Ric^{\al}\equiv \Ric^{\al}_{S}=\Ric^{g}-\frac{1}{4}S^{\al}$, where $S^{\al}$ is the symmetric tensor  defined by $S^{\al}(X, Y)=\sum_{i}\langle T^{\al}(Z_{i}, X), T^{\al}(Z_{i}, Y)\rangle$. In full details  \[
S^{\al}(X, Y)=\al^{2}\sum_{i}\langle [X, Z_{i}]_{\fr{m}}, [Y, Z_{i}]_{\fr{m}}\rangle=\al^{2}\Big\{B(X, Y)-2A(X, Y)\Big\}.
 \]
 \end{corollary}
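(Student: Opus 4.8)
The plan is to specialize the general identities for a metric connection with skew-torsion recalled in Section~\ref{CST} to the torsion $T^{\al}=\al\cdot T^{c}$ of the family $\nabla^{\al}$. For the symmetry I would use the decomposition $\Ric=\Ric_{S}+\Ric_{A}$ together with $\Ric_{A}(X,Y)=-\frac{1}{2}(\delta^{g}T)(X,Y)$. Since the torsion is a $3$-form one has $\delta^{\al}T^{\al}=\delta^{g}T^{\al}=\al\cdot\delta^{g}T^{c}$; and because $T^{c}$ is $\nabla^{c}$-parallel ($\nabla^{c}T^{c}=0$), the co-differential $\delta^{c}T^{c}=-\sum_{i}Z_{i}\lrcorner\nabla^{c}_{Z_{i}}T^{c}$ vanishes, whence $\delta^{g}T^{c}=\delta^{c}T^{c}=0$. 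Thus $\Ric^{\al}_{A}\equiv 0$ and $\Ric^{\al}\equiv\Ric^{\al}_{S}$; alternatively one may simply cite $\delta^{\al}T^{\al}=0$ from \cite{Agr03}.

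For the formula, substituting $T=T^{\al}$ into $\Ric_{S}(X,Y)=\Ric^{g}(X,Y)-\frac{1}{4}\sum_{i}\langle T(Z_{i},X),T(Z_{i},Y)\rangle$, with $\{Z_{i}\}$ an orthonormal basis of $\fr{m}$ in the role of the local frame, gives $\Ric^{\al}=\Ric^{g}-\frac{1}{4}S^{\al}$ directly from the definition of $S^{\al}$. To put $S^{\al}$ in closed form I would use $T^{\al}(Z_{i},X)=\al\,[X,Z_{i}]_{\fr{m}}$, which follows from $T^{c}(X,Y)=-[X,Y]_{\fr{m}}$; pulling the scalar out of the inner product yields
\[
S^{\al}(X,Y)=\al^{2}\sum_{i}\langle [X,Z_{i}]_{\fr{m}},\, [Y,Z_{i}]_{\fr{m}}\rangle.
\]
The standard relation (\ref{use1}), $B(X,Y)=\sum_{i}\langle [X,Z_{i}]_{\fr{m}},[Y,Z_{i}]_{\fr{m}}\rangle+2A(X,Y)$, then rewrites the sum as $B(X,Y)-2A(X,Y)$, giving the asserted $S^{\al}(X,Y)=\al^{2}\{B(X,Y)-2A(X,Y)\}$.

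Everything here is bookkeeping once the two ingredients are in place, so there is no serious obstacle. The only point I would treat with care is the vanishing of the antisymmetric part: it hinges entirely on the co-closedness of $T^{\al}$, and the cleanest justification is the one above, deriving $\delta^{g}T^{c}=0$ from $\nabla^{c}$-parallelism of $T^{c}$ rather than invoking an external reference. Once $\Ric^{\al}_{A}=0$ is secured, the remaining steps are immediate substitutions and a single application of (\ref{use1}).
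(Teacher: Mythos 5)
Your proposal is correct and follows essentially the same route as the paper: the paper likewise deduces symmetry of $\Ric^{\al}$ from $\delta^{\al}T^{\al}=0$ (cited from \cite{Agr03}) and then obtains the formula for $S^{\al}$ by substituting $T^{\al}(Z_{i},X)=\al\,[X,Z_{i}]_{\fr{m}}$ and invoking the second identity in (\ref{use1}). Your added derivation of $\delta^{g}T^{c}=\delta^{c}T^{c}=0$ from $\nabla^{c}T^{c}=0$ is a valid, self-contained replacement for the external citation, but it does not change the structure of the argument.
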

We proceed with a  few  remarks for the well-known flat case $R^{\al}=0$, see \cite{DAN, Nagy, FLAT}.  Let  $\Jac_{\fr{m}} : \fr{m}\times\fr{m}\times\fr{m}\to\fr{m}$ be the trilinear map    defined by  $\Jac_{\fr{m}}(X, Y, Z):=\fr{S}^{X, Y, Z}[X, [Y, Z]_{\fr{m}}]_{\fr{m}}$, where $\fr{S}$ denotes the cyclic  sum over the vectors $X, Y, Z\in\fr{m}$.     For a  symmetric space $(M=G/K, g)$ of Type I, it is $\Jac_{\fr{m}}\equiv 0$  identically,   since $[\fr{m}, \fr{m}]\subset\fr{k}$. By   using Theorem \ref{symspace} we  can prove      that  
        \begin{lemma}\label{myjohn}
Let $(M^{n}=G/K, g)$  be a  compact  connected  naturally reductive Riemannian  manifold   endowed  with a $G$-invariant metric connection $\nabla^{\al}$ whose torsion is such that $T^{\al}=\al\cdot T^{c}$, for some $\al\in\bb{R}\backslash\{0, 1\}$. Then the following  are equivalent:

\noindent $(a)$    $Jac_{\fr{m}}\equiv 0$ identically,

 \noindent $(b)$   $[\fr{m}, \fr{m}]\subset\fr{m}$,
 
 \noindent $(c)$ $M^{n}\cong G$  is isometric to a compact Lie group  with a bi-invariant metric.
      \end{lemma}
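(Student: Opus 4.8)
The plan is to establish the cyclic chain by first disposing of the elementary implications and then concentrating all the work on the direction $(a)\Rightarrow(b)$. The conceptual hinge I would record at the outset is that $\Jac_{\fr m}$ is exactly the Jacobiator of the projected bracket, so $(a)$ says precisely that $(\fr m,[\cdot,\cdot]_{\fr m})$ is a Lie algebra. Splitting the Jacobi identity of $\fr g$ into its $\fr m$-component gives
\[
\Jac_{\fr m}(X,Y,Z)=-\,\fr{S}^{X,Y,Z}[X,[Y,Z]_{\fr k}]=-\,\fr{S}^{X,Y,Z}R^{c}(X,Y)Z,
\]
since $R^{c}(Y,Z)X=-[[Y,Z]_{\fr k},X]$ by \eqref{torsion}. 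Hence $(a)$ is equivalent to the first Bianchi identity for the canonical connection, a reformulation I would use throughout. Pairing with a fourth vector and using $\Ad(K)$-invariance, one also gets the clean Gram expression $\langle R^{c}(X,Y)Z,W\rangle=\langle[X,Y]_{\fr k},[W,Z]_{\fr k}\rangle$, which will be relevant for the obstacle below.

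The easy part is $(b)\Leftrightarrow(c)$ and $(b)\Rightarrow(a)$. If $(b)$ holds then $[Y,Z]_{\fr m}=[Y,Z]$ for $X,Y,Z\in\fr m$, so $\Jac_{\fr m}(X,Y,Z)=\fr{S}^{X,Y,Z}[X,[Y,Z]]=0$ by the Jacobi identity of $\fr g$; this is $(b)\Rightarrow(a)$. Combining $(b)$ with the standing assumption $\fr g=\tilde{\fr g}=\fr m+[\fr m,\fr m]$ forces $\fr g=\fr m$, i.e. $\fr k=\{0\}$ and $M=G$; natural reductivity then reads as total skew-symmetry of $\langle[X,Y],Z\rangle$, which is exactly $\ad$-invariance of $\langle\cdot,\cdot\rangle$, so the metric is bi-invariant and $(c)$ holds. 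Conversely $(c)$ in the minimal presentation returns $\fr k=\{0\}$, hence $(b)$.

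For the substantial direction $(a)\Rightarrow(b)$ I would first record the metric refinement of $(a)$: since $(M,g)$ is naturally reductive, $\langle[X,Y]_{\fr m},Z\rangle$ is totally skew, so $\langle\cdot,\cdot\rangle$ is $\ad$-invariant for $[\cdot,\cdot]_{\fr m}$. Thus under $(a)$ the triple $(\fr m,[\cdot,\cdot]_{\fr m},\langle\cdot,\cdot\rangle)$ is a \emph{compact metric Lie algebra}, whose Cartan $3$-form is $-T^{c}$. In parallel I would exploit that $(a)$ lets one deform $\fr g$ into a symmetric object: on $\fr k\oplus\fr m$ define the bracket that agrees with that of $\fr g$ on $\fr k\times\fr k$ and $\fr k\times\fr m$ and is $[X,Y]^{*}:=[X,Y]_{\fr k}$ on $\fr m\times\fr m$. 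The only nontrivial Jacobi identity to check is $\fr{S}^{X,Y,Z}[X,[Y,Z]^{*}]^{*}=\fr{S}^{X,Y,Z}[X,[Y,Z]_{\fr k}]=-\Jac_{\fr m}(X,Y,Z)=0$, so $(a)$ yields an orthogonal symmetric pair $(\fr g^{*},\fr k)$ with curvature $R^{*}=R^{c}$. Splitting the resulting compact symmetric space off its Euclidean factor and applying Theorem \ref{symspace} to each irreducible Type I summand (where only $\nabla^{c}\equiv\nabla^{g}$ survives) is the device by which I would try to show that the parallel torsion $T^{c}$ cannot be carried by a genuine Type I factor, forcing the grouplike alternative $[\fr m,\fr m]\subset\fr m$.

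The hard part, and the place where I expect the real difficulty, is precisely the \emph{sphere anomaly} of Remark \ref{nonsymspaces}. The purely algebraic symmetries of $R^{c}$ are not enough: for a curvature tensor of the Gram type $\langle[X,Y]_{\fr k},[W,Z]_{\fr k}\rangle$ the first Bianchi identity does \emph{not} force $[\fr m,\fr m]_{\fr k}=0$, and indeed a naturally reductive presentation may have $R^{c}\neq0$ while the space is still isometric to a compact Lie group with bi-invariant metric — the model being the round $\Ss^{3}\cong(\SU(2)\times\SU(2))/\SU(2)$, where $(\fr m,[\cdot,\cdot]_{\fr m})\cong\fr{su}(2)$ is a Lie algebra yet $[\fr m,\fr m]_{\fr k}\neq0$. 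Consequently $(a)$ does not immediately deliver the literal inclusion $[\fr m,\fr m]\subset\fr m$ in the given splitting; the delicate step is to recognise $M$, via its metric Lie algebra $\fr m$ together with compactness and $\fr g=\tilde{\fr g}$, as a compact Lie group with bi-invariant metric and only then pass to the presentation in which $(b)$ can be read off. Getting this identification to hold in all cases, rather than up to the exceptional spheres, is where the argument must be handled with care, and where Theorem \ref{symspace} and the classification of such presentations are essential.
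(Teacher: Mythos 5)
Your reductions are correct as far as they go: the identity $\Jac_{\fr{m}}=-\fr{S}^{X,Y,Z}[X,[Y,Z]_{\fr{k}}]$, the implications $(b)\Rightarrow(a)$ and $(b)\Leftrightarrow(c)$ via $\fr{g}=\tilde{\fr{g}}$, and the observation that under $(a)$ the modified bracket $[X,Y]^{*}:=[X,Y]_{\fr{k}}$ on $\fr{m}\times\fr{m}$ turns $(\fr{g}^{*},\fr{k})$ into a symmetric pair are all sound, and the easy implications match the paper's. But the proposal stops exactly where the content of the lemma lies: you never derive $(b)$ from $(a)$. You outline a strategy and then record --- correctly --- that it founders on the symmetric presentations such as $\Ss^{3}=(\SU(2)\times\SU(2))/\Delta\SU(2)$, where $\Jac_{\fr{m}}\equiv 0$ holds trivially while $[\fr{m},\fr{m}]\subset\fr{k}\neq 0$. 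Saying that "the argument must be handled with care" is not a proof; as written, the implication $(a)\Rightarrow(b)$ is left open, and that is a genuine gap.

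The paper's exit from precisely the difficulty you identify is the hypothesis $\al\in\bb{R}\backslash\{0,1\}$ combined with Theorem \ref{symspace}: on an irreducible symmetric space of Type I the only $G$-invariant metric connection whose torsion is a multiple of $T^{c}$ is $\nabla^{c}$ itself, so the presence of a connection $\nabla^{\al}\neq\nabla^{c}$ (i.e.\ $\al\neq 1$) rules out the Type I presentations from the outset --- this is what disposes of your $\Ss^{3}$ anomaly. For what remains, the paper argues by contradiction: if some pair $X,Y\in\fr{m}$ has $[X,Y]_{\fr{m}}=0$ but $[X,Y]_{\fr{k}}\neq 0$, then under $(a)$ the curvature formula
\[
R^{\al}(X,Y)Z=\frac{(1-\al)^{2}}{4}\Jac_{\fr{m}}(X,Y,Z)+\frac{1-\al^{2}}{4}[Z,[X,Y]_{\fr{m}}]_{\fr{m}}-[[X,Y]_{\fr{k}},Z]
\]
collapses to the canonical curvature $-[[X,Y]_{\fr{k}},Z]$; together with $(\nabla^{\al}_{Z}T^{\al})(X,Y)=\tfrac{\al(\al-1)}{2}\Jac_{\fr{m}}(X,Y,Z)=0$ this gives $\nabla^{\al}R^{\al}=0=\nabla^{\al}T^{\al}$, and the Ambrose--Singer uniqueness of the canonical connection forces $\nabla^{\al}=\nabla^{c}$, contradicting $\al\neq 1$. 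Neither step --- the exclusion of Type I via Theorem \ref{symspace}, nor the curvature/Ambrose--Singer contradiction --- appears in your proposal, so the key implication remains unproved.
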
 
\begin{proof}
The less trivial part is the correspondence between $(a)$ and $(b)$.   By  Proposition \ref{Ilkas}  we write $\nabla^{\al}_{X}Y=\nabla^{c}_{X}Y+\Lambda^{\al}(X)Y$, where   $\Lambda^{\al}(X)Y=\frac{1-\al}{2}[X, Y]_{\fr{m}}$  with  $\al\neq 0, 1$. Since $\al\neq 1$, $\nabla^{\al}$ cannot be the canonical connection associated to $\fr{m}$ and by Theorem \ref{symspace}, $M=G/K$ cannot be a  symmetric space of Type I.  First we prove that  the condition $\Jac_{\fr{m}}\equiv 0$  implies the relation $[\fr{m}, \fr{m}]\subset\fr{m}$.  
  In contrast, assume that there exist some $X, Y\in\fr{m}$ such that $[X, Y]\notin\fr{m}$, i.e. $[X, Y]=[X, Y]_{\fr{k}}$. Computing   the   curvature $R^{\al}$ of $\nabla^{\al}$, we see that  
\[
R^{\al}(X, Y)Z= \frac{(1-\al)^{2}}{4}\Jac_{\fr{m}}(X, Y, Z)+\frac{1-\al^{2}}{4}[Z, [X, Y]_{\fr{m}}]_{\fr{m}}-[[X, Y]_{\fr{k}}, Z], 
\]
 which finally reduces to $R^{\al}(X, Y)Z=-[[X, Y]_{\fr{k}}, Z]=-[[X, Y], Z]$, i.e. $R^{\al}$ is identical with the curvature  associated to  the canonical connection.  Then, it is easy to prove that $\nabla^{\al}R^{\al}=0=\nabla^{\al}T^{\al}$,  but    only the canonical connection has this  property (Ambrose-Singer theorem).
Since $\al\neq 1$   we obtain  a contradiction.  Conversely, notice that the relation $\fr{g}=\tilde{\fr{g}}$ implies that   the $\fr{k}$-part of the commutator  $[\fr{m}, \fr{m}]$  spans all of $\fr{k}$. Assuming that  $[\fr{m}, \fr{m}]\subset\fr{m}$, it means that the isotropy algebra $\fr{k}$ is trivial, i.e. $K=\{e\}$ and  $M^{n}\cong G$.  Hence $\fr{g}=\fr{m}$ and by the Jacobi identity we see that $\Jac_{\fr{m}}\equiv 0$. 
\end{proof}
  
In the naturally reductive case, the   flatness condition $R^{\al}\equiv 0$  for some $\al\neq 0$ has as consequence  the parallelism of the associated   torsion form, i.e. $\nabla^{\al}T^{\al}=0$    (see for example  \cite[Prop.~3.7, (d)]{DAN} or \cite[p.~4]{FLAT}). Notice that $(\nabla^{\al}_{Z}T^{\al})(X, Y)=(\al(\al-1)/{2})\Jac_{\fr{m}}(X, Y, Z)$  \cite{Agr03}.   Thus, in the compact case, Lemma \ref{myjohn}  can be used to recover in a Lie theoretic way a part of the classical Cartan-Schouten theorem, namely: {\it If $(M=G/K, g)$ is a de Rham irreducible, compact, connected  naturally reductive manifold as in our assumption, which is flat with respect to the family $\nabla^{\al}$ for some $\al\in\bb{R}\backslash\{0\}$, then $M^{n}\cong G$ is isometric to a compact  simple  Lie group   endowed with a bi-invariant metric and one of the $\pm 1$-connections}. The reason  that  $\Spin(7)/\G_2$  does not appear here,  is due to   our invariant-torsion scenario which  does not allow non-invariant connections, see Remark \ref{notinv}.  
   

 Let us now have  a closer look at  the  $\nabla^{\al}$-Einstein condition on $(M^{n}=G/K, g)$, namely the polynomial equation
 \begin{equation}\label{gen1}
\frac{1-\al^{2}}{4}B(X, Y)+ \frac{1+\al^{2}}{2}A(X, Y)=\frac{\Sca^{\al}}{n}\langle X, Y\rangle.
\end{equation}

 \begin{example}\label{LIEGROUP}  
 \textnormal{
 Consider a  compact connected Lie group $M^{n}\cong G$ with a bi-invariant metric $g$.  Then, the  $\nabla$-Einstein condition is equivalent to the original  Einstein condition \cite[Lemma 2.18]{AFer}.  Indeed, in this case $A$ is identically equal to zero, thus the Ricci tensor associated to $\nabla^{\al}$ is proportional to Riemannian Ricci curvature.   For simplicity, let us use the family $\nabla^{\al}_{X}Y=\eta^{\al}(X, Y):=((1-\al)/2)[X, Y]$.  Then
 \[
 S^{\al}(X, Y)=\al^{2}\sum_{i}\langle [X, Z_{i}], [Y, Z_{i}]\rangle, \quad \Ric^{\al}(X, Y)=\frac{1-\al^{2}}{4}\sum_{i}\langle [X, Z_{i}], [Y, Z_{i}]\rangle,
 \]
  or in other words $\Ric^{\al}=(1-\al^{2})\Ric^{0}$, where $\Ric^{0}\equiv \Ric^{g}$ is the Riemannian Ricci curvature. A little computation also shows that $\Sca^{\al}=\frac{1-\al^{2}}{4}\sum_{i, j}\|[Z_{i}, Z_{j}]\|^{2}$ and   $\|T^{\al}\|^{2}=\frac{\al^{2}}{6}\sum_{i, j}\|[Z_{i}, Z_{j}]\|^{2}$.  Thus, if $g$ is a bi-invariant Einstein metric with Einstein constant $c$, then $G$ is $\nabla^{\al}$-Einstein with (constant) scalar curvature $\Sca^{\al}=n(1-\al^{2})c$  and   torsion $T^{\al}$ such that $6\|T^{\al}\|^{2}=4cn\al^{2}$.  Conversely, if $M^{n}=G$ is $\nabla^{\al}$-Einstein for $\al\neq \pm 1$, then its scalar curvature $\Sca^{\al}$ is constant (since $\nabla^{\al}T^{\al}=0$ for any $\al\in\bb{R}$) and  the bi-invariant metric $g$ is Einstein with Einstein constant $c=\frac{\Sca^{\al}}{n(1-\al^{2})}$.  By Corollary \ref{GENcs} we know that $G$ becomes $\nabla^{\pm 1}$-flat for $\al=\pm 1$, in particular it  is $\Ric^{\pm 1}$-flat  and thus it is trivially a $\nabla$-Einstein manifold, see also  \cite{AFer}. When $G$ is  simple,   the Killing metric $g_{B}=-B$  is  Einstein  with   $c=1/4$  \cite{Wa1}; hence any compact simple Lie group $G$ is a $\nabla^{\al}$-Einstein manifold with constant scalar curvature  $\Sca^{\al}=n(1-\al^{2})/{4}$ and skew-torsion $T^{\al}$ such that $\|T^{\al}\|^{2}=n\al^{2}/{6}$. 
  Because $\SU(2)\cong\Ss^{3}$, we also conclude that the 3-sphere $\Ss^{3}$ is a $\nabla^{\al}$-Einstein manifold with respect to 1-parameter family $\nabla^{\al}$ of bi-invariant metric connections. To summarise:  {\it Any compact connected simple Lie group $G$ is a $\nabla$-Einstein manifold with parallel torsion with respect to a 1-parameter family of bi-invariant metric connections, namely the family $\nabla^{\al}$ described in Theorem \ref{cc}.  In particular, the $\nabla^{\pm 1}$-Einstein structures are flat}.}
\end{example}

Fix now a compact  simply-connected homogeneous Riemannian manifold $(M=G/K, g)$ endowed with an effective $G$-action and   assume that the  isotropy representation of $K$ on $T_{o}G/K\cong\fr{g}/\fr{k}$  is irreducible over $\bb{R}$.  Because  $G$ is compact and its semi-simple part acts transitively on $G/K$,  in addition one can assume that $G$ is a compact, connected, semi-simple Lie group.    If $M$ is an isotropy irreducible homogeneous Riemannian manifold, both  its universal covering and the product  $M\times\cdots\times M$ are isotropy irreducible (the latter  with the product metric).  Hence we shall focus in the case that $M=G/K$  is de Rham irreducible, with the aim to  describe specific solutions of the $\nabla^{\al}$-Einstein condition.   
 Since we are treating invariant connections whose torsion is a multiple of the canonical torsion, by  Theorem \ref{symspace} we have to exclude   symmetric spaces of Type I. 
 On the other hand, notice that $(M=G/K, g)$   admits a unique (up to scale) $G$-invariant (Einstein) metric,   the Killing metric $g_{B}$, see for example \cite[Prop.~7.91, p.~198]{Bes}.  Hence, $(M=G/K, g_{B})$ is  naturally reductive, in particular {\it standard} (in terms of \cite[Def.~7.90]{Bes}); this   means  that $\fr{m}$ is chosen so that $B(\fr{k}, \fr{m})=0$.  
Recall also that basic examples of isotropy irreducible cosets are the strongly isotropy irreducible homogeneous spaces where for  the non-symmetic case, $G$ is always simple \cite[Thm.~1.1, p.~62]{Wolf}.  The isotropy irreducible homogeneous Riemannian manifolds, which are not strongly isotropy irreducible, were classified in \cite{Wa4}.      Finally,  we remark that a non-compact isotropy irreducible space is necessarily symmetric, see \cite[Prop.~7.46]{Bes}. In this case, the form of the family $\nabla^{\al}$  restricts the  $\nabla^{\al}$-Einstein condition to  be valid   only for non-compact simple Lie groups, e.g. ${\rm Sl}(2, \bb{C})$. 
  \begin{theorem}\label{THM4}
Let $(M^{n}=G/K, g_{B})$ be a  compact simply-connected isotropy irreducible standard homogeneous Riemannian manifold, endowed with an effective action of a compact connected simple Lie group $G$. Assume that   $M=G/K$    is not a symmetric space of Type I.  Then, $(M^{n}=G/K, g)$   is a  $\nabla^{\al}$-Einstein manifold for any $\al\in\bb{R}$. The same holds if  $G$ is semi-simple but not simple, i.e. when $M$ is isometric to  the coset $(G/\Delta Z)/(\Delta H/\Delta Z)$ with $G:=H\times \ldots \times H$  ($q$-times) for a compact simply connected simple Lie group $H$. 
  \end{theorem}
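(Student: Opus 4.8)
The plan is to show that equation (\ref{gen1}) holds identically, by proving that both bilinear forms appearing in the Ricci tensor of Theorem \ref{NEWLIFE}, namely $A$ and $B$, are scalar multiples of the naturally reductive inner product $\langle\ ,\ \rangle$. First I would recall from Theorem \ref{NEWLIFE} the decomposition $\Ric^{\al}=\frac{1-\al^{2}}{4}B+\frac{1+\al^{2}}{2}A$, together with Corollary \ref{symSS}, which guarantees that $\Ric^{\al}$ is symmetric; hence the $\nabla^{\al}$-Einstein condition is exactly $\Ric^{\al}=\frac{\Sca^{\al}}{n}g$, and it is enough to verify that $\Ric^{\al}=\lambda\cdot g$ for some constant $\lambda$, since then automatically $\Sca^{\al}=n\lambda$ and the equation follows.

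Next I would exploit that $(M=G/K,g_{B})$ is \emph{standard}: the complement $\fr{m}$ is the $B$-orthogonal complement of $\fr{k}$ and the metric is (a multiple of) the Killing metric, so that $B(X,Y)=\langle X,Y\rangle$ on $\fr{m}$, i.e. $B$ is already proportional to $g$. It then remains to treat $A(X,Y)=\langle C_{\chi}X,Y\rangle$. Here the key observation is that the Casimir operator $C_{\chi}:\fr{m}\to\fr{m}$ is self-adjoint with respect to $\langle\ ,\ \rangle$ and commutes with the isotropy action $\Ad(K)|_{\fr{m}}$. Being symmetric it is diagonalizable over $\bb{R}$, and each of its eigenspaces is $\Ad(K)$-invariant; by the assumed real irreducibility of the isotropy representation $\chi$ there is only one eigenvalue, whence $C_{\chi}=c\cdot\Id$ for some $c\in\bb{R}$ (equivalently, by Schur's lemma, self-adjointness excludes the non-scalar elements of the commutant in the complex and quaternionic types). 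Therefore $A=c\cdot g$, and substituting into the formula above gives $\Ric^{\al}=\big(\tfrac{1-\al^{2}}{4}+\tfrac{1+\al^{2}}{2}c\big)g$, which is the required Einstein condition. The hypothesis that $M$ is not a symmetric space of Type I enters only to ensure that the torsion $T^{\al}=\al\,T^{c}$ is genuinely nontrivial for $\al\neq 0$: a non-symmetric isotropy irreducible coset has $[\fr{m},\fr{m}]_{\fr{m}}\neq 0$, so $T^{c}\neq 0$ and $\nabla^{\al}$ truly carries skew-torsion (the case $\al=0$ reducing to the classical fact that the isotropy irreducible Killing metric is Einstein, which is the same scalar statement $C_{\chi}=c\cdot\Id$).

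Finally, for the semisimple but non-simple case $M\cong(G/\Delta Z)/(\Delta H/\Delta Z)$ with $G=H\times\cdots\times H$, I would observe that the argument transfers verbatim under the standing isotropy-irreducibility assumption: the relevant Casimir operator, computed from the restriction of the negative Killing form of $\fr{g}$, is again self-adjoint and commutes with $\Ad(K)$, so Schur's lemma forces it to be scalar, while $B|_{\fr{m}}$ is proportional to the standard metric as before. When $M$ happens to be isometric to a compact simple Lie group this already follows from Example \ref{LIEGROUP}. I expect the only delicate point to be the representation-theoretic step — justifying that over $\bb{R}$ a symmetric endomorphism lying in the commutant of an irreducible representation must be scalar — since the reduction to ``$A$ and $B$ proportional to $g$'' is immediate once the Ricci formula of Theorem \ref{NEWLIFE} is available.
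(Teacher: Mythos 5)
Your proof is correct and follows essentially the same route as the paper: both hinge on Schur's lemma forcing the Casimir operator $C_{\chi}$ to be a scalar $\Cas\cdot\Id_{\fr{m}}$, so that $A$ and $B$ are both proportional to $g_{B}$ and the Ricci tensor of Theorem \ref{NEWLIFE} is a constant multiple of the metric for every $\al$. The only difference is cosmetic: the paper verifies the normalization $\Sca^{\al}/n$ explicitly via the trace identity $2n\Cas+\sum_{i,j}\|[Z_{i},Z_{j}]_{\fr{m}}\|^{2}-n=0$ coming from (\ref{use1}), whereas you correctly observe that this is automatic once the symmetric Ricci tensor is a constant multiple of $g$.
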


\begin{proof}   Assume first that $G$ is simple. 
 By Schur's lemma,   $g_{B}:=B|_{\fr{m}\times\fr{m}}$ is the unique $G$-invariant (Einstein) metric. Consider the Casimir operator $C_{\chi}\equiv C_{\chi, B_{\fr{k}}} : \fr{m}\to\fr{m}$   associated to the isotropy representation $\chi$ and the positive definite restriction $B_{\fr{k}}$. Then, $C_{\chi}={\rm Cas}\cdot \Id_{\fr{m}}$ with $\Cas\in\bb{R}_{+}$.     In particular, given a  $B$-orthonormal basis $\{Z_{1}, \ldots, Z_{n}\}$  of $\fr{m}$, it is $B(C_{\chi} Z_{i}, Z_{i})= B(\Cas \cdot Z_{i}, Z_{i})={\rm Cas}\cdot B(Z_{i}, Z_{i})=\Cas$ and hence
\[
\Cas=A(Z_{i}, Z_{i})=\sum_{j}B_{\fr{k}}([Z_{i}, Z_{j}], [Z_{i}, Z_{j}]),
\]
  where  $A(X, Y):=B(C_{\chi}X, Y)$.  By Theorem \ref{NEWLIFE} it follows that the $\nabla^{\al}$-Einstein  condition (\ref{gen1})  can be expressed by
\begin{equation}\label{mainv}
 \frac{1-\al^{2}}{4}B(X, Y)+ \frac{1+\al^{2}}{2}A(X, Y) = \frac{f(\al)}{4n}B(X, Y), 
\end{equation}
where $f(\al):=(1-\al^{2})\sum_{i, j}\|[Z_{i}, Z_{j}]_{\fr{m}}\|^{2}+4\sum_{i}A(Z_{i}, Z_{i})$ is such that $f(1)=f(-1)=4\sum_{i}A(Z_{i}, Z_{i})=4n\cdot\Cas$. Looking for $\nabla^{\al}$-Einstein structures with skew-torsion for the values $\al=\pm 1$, this formula reduces to 
  \[
  A(X, Y)=\frac{f(\pm 1)}{4n}B(X, Y)=\frac{4n\cdot \Cas}{4n}B(X, Y)=B(\Cas\cdot X, Y)=B(C_{\chi}X, Y),
 \]
which is an identity. In fact,  we will show  that the $\nabla^{\al}$-Einstein condition is an identity for {\it any} $\al\in\bb{R}$. Indeed, (\ref{mainv}) is nothing than the equation $\mu(X, Y)\cdot\al^{2}=\nu(X, Y)$, where  for any  $X, Y\in\fr{m}$  we set
 \begin{eqnarray*}
\mu(X, Y)&:=&2nA(X, Y)+(\sum_{i, j}\|[Z_{i}, Z_{j}]_{\fr{m}}\|^{2}-n)B(X, Y), \\
\nu(X, Y)&:=&-2nA(X, Y)+(\sum_{i, j}\|[Z_{i}, Z_{j}]_{\fr{m}}\|^{2}-n+4n\cdot \Cas)B(X, Y).
\end{eqnarray*} 
Now, it is easy  to see that $\mu(X, Y)\equiv \nu(X, Y)$ and $\mu(X, Y)\equiv 0$, identically.  For example, since $\fr{m}$ is isotropy irreducible, for any $X, Y\in\fr{m}$ it holds that
 \begin{eqnarray*}
 \mu(X, Y)&=&2nA(X, Y)+(\sum_{i, j}\|[Z_{i}, Z_{j}]_{\fr{m}}\|^{2}-n)B(X, Y)\\
 &=&2nB(C_{\chi}X, Y)+(\sum_{i, j}\|[Z_{i}, Z_{j}]_{\fr{m}}\|^{2}-n)B(X, Y)\\
 &=&2n\Cas B(X, Y)+(\sum_{i, j}\|[Z_{i}, Z_{j}]_{\fr{m}}\|^{2}-n)B(X, Y)\\
 &=&(2n\Cas+ \sum_{i, j}\|[Z_{i}, Z_{j}]_{\fr{m}}\|^{2}-n)B(X, Y).
 \end{eqnarray*}
 However, after considering the sum of  the second relation in (\ref{use1})  with respect to  the orthonormal basis $\{Z_{i}\}$ and using the relation $\sum_{i}A(Z_{i}, Z_{i})=n\Cas$, we deduce that $2n\Cas+ \sum_{i, j}\|[Z_{i}, Z_{j}]_{\fr{m}}\|^{2}-n=0$.    
 
 Let us proceed now with the  non-simple case. In \cite[Thm.~2.2]{Wa4} it was shown that the unique example of a de Rham irreducible,  compact,  simply connected isotropy irreducible homogeneous Riemannian manifold $M=G/K$ with $\fr{g}$ semi-simple but not simple, is the coset $G/\Delta H$  where $H$ is a compact  simply connected  simple Lie group and $\Delta H$ the diagonal subgroup of $G:=H\times \ldots \times H$  ($q$-times). The effective version of $G/\Delta H$ has the form $(G/\Delta Z)/(\Delta H/\Delta Z)$, where $Z$ is the finite centre of $H$ and due to our assumptions, $M$ is  isometric to $(G/\Delta Z)/(\Delta H/\Delta Z)$.   The isotropy representation is given by $\chi:=\oplus_{i=1}^{q-1}\ad_{\fr{h}}$ with $\fr{m}:=\{(X_{1}, \ldots, X_{q}) : \sum X_{i}=0, X_{i}\in\fr{h}\}$.   $M$ is a symmetric space  of Type II for $q=2$; for $q\geq 3$  it is isotropy  irreducible but not strongly isotropy irreducible.  The negative $B$ of the Killing form  of $H$ is Einstein, i.e.  $C_{\chi, B}=\Cas\cdot\Id$ for some $\Cas>0$ and our  result above shows  that for any  $q\geq 2$ the standard homogeneous space $M=(G/\Delta Z)/(\Delta H/\Delta Z)$ is a $\nabla^{\al}$-Einstein manifold, for any $\al\in\bb{R}$. For $q=2$, this assertion follows also from the summary in Example \ref{LIEGROUP}. This completes the proof.
    \end{proof}
 Among the  $\nabla^{\al}$-Einstein structures described above, there are two  special members, namely the  structures defined by  the canonical and the anti-canonical connection. These have identical Ricci tensor  $\Ric^{\pm 1}(X, Y)=A(X, Y)=\Cas B(X, Y)$  with $S^{\pm 1}(X, Y)=B(X, Y)-2A(X, Y)=(1-2\Cas)B(X, Y)$ and $\Sca^{\pm 1}=n\Cas>0$. Notice however that  $\nabla^{1}T^{1}=0\neq \nabla^{-1}T^{-1}$ \cite{Agr03}. Moreover, 
  \[
 R^{1}(X, Y, Z)=-[[X, Y]_{\fr{k}}, Z], \quad R^{-1}(X, Y)Z=\Jac_{\fr{m}}(X, Y, Z)-[[X, Y]_{\fr{k}}, Z].\]
Hence, geometrically the Jacobian $\Jac_{\fr{m}}$  represents  the difference of the curvatures associated to  the  canonical and the anti-canonical connection, i.e. $R^{-1}(X, Y, Z)-R^{1}(X, Y, Z)=\Jac_{\fr{m}}(X, Y, Z)$.  Of course, the case   changes   for   a compact simple Lie group $G$, where  the $\nabla^{\pm 1}$-Einstein structures are necessarily flat, i.e. $R^{\pm 1}\equiv 0$  (see  Theorem \ref{cc}, Lemma \ref{myjohn} and Example \ref{LIEGROUP}).   

\begin{example}\textnormal{We conclude  that    the spheres $\Ss^{6}=\G_2/\SU(3)$ and $\Ss^{7}=\Spin(7)/\G_2$ are $\nabla^{\al}$-Einstein manifolds for any $\al\in\bb{R}$.  Notice that the $\nabla^{1}$-Einstein structure on $\Ss^{6}$ coincides with the one induced by the Gray connection (parallel skew-torsion), see \cite{Srni}. On the other hand,    the $\nabla^{\al}$-Einstein structures on $\Ss^{7}$ are  not flat and hence they   differ from the trivial $\nabla^{\rm flat}$-Einstein structure (with non-parallel torsion), associated to the flat connection   discussed  in Remark \ref{notinv}.   For an alternative  study of homogeneous $\nabla$-Einstein structures on odd-dimensional spheres we refer to \cite{Draper}.}
\end{example}
\begin{example}
\textnormal{Let $G$ be a compact simple Lie group whose simple roots are all of the same length; this means that $G$ is one of the groups  $\SU(\ell), \SO(2\ell), \E_6, \E_7, \E_8$. Consider the full flag manifold $M=G/T$, where $T$ is a maximal torus in $G$.  Let $\Delta$ be the root system of the complexification $\fr{g}^{\bb{C}}=\fr{g}\otimes_{\bb{R}}\bb{C}$ with respect to the Cartan subalgebra $\fr{t}^{\bb{C}}=\fr{t}\otimes_{\bb{R}}\bb{C}$, where $\fr{t}$ is the Lie algebra of $T$.  Fix a   Weyl basis $\{E_{\al} : \al\in \Delta\}$ of $\fr{g}^{\bb{C}}$ and for any positive root $\al\in \Delta^{+}$,   set $A_{\al}=E_{\al}+E_{-\al}$, $B_{\al}=i(E_{\al}-E_{-\al})$ and    $\fr{m}_{\al}:=\bb{R}A_{\al}+\bb{R}B_{\al}$.  The  root space decomposition  of $\fr{g}$ has the form $\fr{g} =\fr{t}\oplus\bigoplus_{\al\in \Delta^{+}}\fr{m}_{\al}=\fr{t}\oplus\fr{m}$, i.e. $\fr{m}\cong T_{o}(G/T)=\bigoplus_{\al\in \Delta^{+}}\fr{m}_{\al}$.   Any  $\fr{m}_{\al}$  is a 2-dimensional real irreducible  $\Ad(T)$-module, which simultaneously can be viewed as a complex plane  on which  $T$ acts   by rotations.  The Weyl group of $G$ acts transitively on the factors $\fr{m}_{\al}$ (since acts transitively on roots of the same length)  and by adding  these isometries to $G$,  the full flag manifold $G/T$ becomes an isotropy irreducible space  \cite[p.~235]{Wa4}  with the Killing metric $g_{B}$ being Einstein \cite[Cor.~1.5]{Wa1}.  By Theorem \ref{THM4} it follows that these  full flag manifolds  are also  $\nabla^{\al}$-Einstein manifolds for any $\al\in\bb{R}$.  }
\end{example}
\begin{example}
\textnormal{Consider the 7-dimensional Berger sphere $B^{7}=\SO(5)/\SO(3)_{\rm ir}$.  The embedding   $\SO(3)_{\rm ir}\subset\SO(5)$ is given by  the  unique 5-dimensional $\SO(3)$-irreducible representation which is defined by the action of $\SO(3)$ on the set of $3\times 3$ symmetric traceless matrices $\Ss^{2}_{0}(\bb{R}^{3})\cong\bb{R}^{5}$,   via conjugation.   The isotropy representation coincides with the unique 7-dimensional $\SO(3)$-irreducible representation, and this  defines an embedding of $\SO(3)$ inside $\G_2$ and thus a $\G_2$-structure.  The  Killing metric $g_{B}$ is the unique $\SO(5)$-invariant (Einstein) metric (up to scale).  By Theorem \ref{THM4}  we conclude that  $B^{7}$ it is also  a $\nabla^{\al}$-Einstein manifold for any $\al\in\bb{R}$.  In fact, the $\G_2$-structure is (proper) nearly parallel,  i.e.  the coset $\SO(5)/\SO(3)_{\rm ir}$ admits a unique real Killing spinor \cite{FKMS}.  Hence,  the canonical connection $\nabla^{1}$ coincides with the  characteristic connection $\nabla^{c}$ preserving this  structure, in particular the $\nabla^{1}$-Einstein structure is well-known, see \cite[p.~318]{FrIv}. }
\end{example}

   \section{Homogeneous Riemannian manifolds with two isotropy summands}\label{MINES}

   In this final section we study compact connected homogeneous Riemannian manifolds   $(M=G/K, g)$   whose isotropy representation $\chi : K\to\SO(\fr{m})$ decomposes   into two (non-trivial)   inequivalent  and irreducible  $K$-submodules satisfying   (\ref{incl}).   Well-known examples are:     connected semi-simple Lie groups,     flag manifolds with two isotropy summands, odd-dimensional spheres,   3-- and 4--symmetric spaces (see \cite{Chry, Chry1} and the references therein).  Without loss of generality we  assume that the compact Lie group  $G$ is    connected and semi-simple  and  that  $K$ is connected, see \cite{Bes, Wa1}.  We consider the 1-parameter family of   $G$-invariant Riemannian metrics on $M=G/K$, given by $g_{t}=B|_{\fr{m}_{1}\times\fr{m}_{1}}+2t\cdot B|_{\fr{m}_{2}\times\fr{m}_{2}}$ for some $t\in\bb{R}_{+}$,
where $B$ denotes the negative of the Killing form of $\fr{g}$.  It follows that any  $G$-invariant Riemannian metric on $M=G/K$ is  a multiple of  $g_{t}$.  The value $t=1/2$  defines  the Killing metric   $g_{1/2}=g_{B}=B|_{\fr{m}_{1}\times\fr{m}_{1}}+ B|_{\fr{m}_{2}\times\fr{m}_{2}}$. 
      \begin{remark}\label{symspaces}
  \textnormal{There is a natural construction that gives rise to compact homogeneous spaces with two isotropy summands satisfying (\ref{incl}). Consider a semi-simple Lie algebra  $\fr{g}$ and  assume  that the pairs $(\fr{g}, \fr{k}\oplus\fr{m}_{2})$ and  $(\fr{k}\oplus\fr{m}_{2}, \fr{k})$ are orthogonal symmetric pairs such that  $\fr{m}_{1}$ be an orthogonal complement of $\fr{k}\oplus\fr{m}_{2}$ in $\fr{g}$, with respect to    the Killing form of $\fr{g}$. Then, by setting $\fr{m}=\fr{m}_{1}\oplus\fr{m}_{2}$ one can easily verify  the inclusions given by (\ref{incl}).  In this way we obtain a  Riemannian submersion $U/K\to G/K\to G/U$ where $U$ is the connected Lie group generated by the Lie algebra $\fr{u}=\fr{k}\oplus\fr{m}_{2}$, and   both (effective) quotients $U/K$ and $G/U$ are symmetric spaces.  If $\fr{p}$ denotes an orthogonal complement of $\fr{u}$ in $\fr{g}$, i.e. $\fr{g}=\fr{u}\oplus\fr{p}$, then we may identify $\fr{p}=T_{o}G/U=\fr{m}_{1}$, $\fr{m}_{2}=T_{o}U/K$ and $\fr{m}=T_{o}G/K=\fr{m}_{1}\oplus\fr{m}_{2}$. }
  \end{remark}

  Next we shall characterize   the $G$-invariant metric connections of $M=G/K$ which have (totally) skew-symmetric torsion.  
   First we recall  some details.
    Consider a connected Riemannian  manifold $(M^{n}, g)$  carrying  a  metric  connection $\nabla$.   Via $g$ we shall identify $TM\cong T^{*}M$.   Set       ${\cal A}^{g}:=\{A\in\otimes^{3}TM :   A(X, Y, Z)+A(X, Z, Y)=0\}\cong TM\otimes\Lambda^{2}(TM)$.      Since $\nabla$ is metric  we write $\nabla_{X}Y-\nabla^{g}_{X}Y=A(X, Y)$ for a $(2, 1)$-tensor field $A\in{\cal{A}}^{g}$.  Given a (local) orthonormal frame  $\{e_{i}\}$ of $M$, let $\Phi : {\cal{A}}^{g}\to T^{*}M$ the map defined by $\Phi(A)(Z)=\sum_{i}A(e_{i}, e_{i}, Z)$,          where $A(X, Y, Z):=g(A(X, Y), Z)$.  It is   well-known  that ${\cal{A}}^{g}$ coincides with the space of torsion tensors and under the action of $\Oo(n)$  it decomposes  into three irreducible  representations, i.e. ${\cal{A}}^{g}={\cal{A}}_{1}\oplus{\cal{A}}_{2}\oplus{\cal{A}}_{3}$ (see for example \cite{Srni, Pfa}). These are  explicitly  given by
                  \begin{eqnarray*}
         {\cal{A}}_{1}&:=&\{A\in{\cal{A}}^{g} : A(X, Y, Z)=g(X, Y)g(V, Z)-g(V, Y)g(X, Z), \ V\in{\cal{X}}(M)\},\\
         {\cal{A}}_{2}&:=&\{A\in{\cal{A}}^{g} : A(X, Y, Z)+A(Y, X, Z)=0\},\\
         {\cal{A}}_{3}&:=&\{A\in{\cal{A}}^{g} :  \fr{S}^{X, Y, Z}A(X, Y, Z)=0, \Phi(A)=0\}.
         \end{eqnarray*}
            We say that the torsion $T^{\nabla}(X, Y)=\nabla_{X}Y-\nabla_{Y}X-[X, Y]$ is of vectorial type (and the same   for  $\nabla$)  if  $A\in{\cal{A}}_{1}\cong TM$, (totally) skew-symmetric if $A\in{\cal{A}}_{2}\cong \Lambda^{3}(T^{*}M)$ and finally of Cartan type   if  $A\in{\cal{A}}_{3}$. For $n=2$, ${\cal{A}}^{g}\cong\bb{R}^{2}$ is $\Oo(2)$-irreducible. 
  
                    \begin{theorem}\label{chato}
   Let $\nabla$ be a    $G$-invariant metric connection  of  the homogeneous Riemannian manifold $(M^{n}=G/K, \fr{m}_{1}\oplus\fr{m}_{2}, g_{t})$ $(n\geq 3)$ with non-trivial  torsion $T\neq  0$.  Then, $T$ is totally skew-symmetric if and only if $t=1/2$  and  $\Lambda(X)X=0$ for any $X\in\fr{m}$, where $\Lambda : \fr{m}\to\fr{so}(\fr{m})$ denotes the associated Nomizu map. 
        \end{theorem}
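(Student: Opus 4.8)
The plan is to phrase everything through the Nomizu map $\Lambda=\Lambda_{\fr m}:\fr m\to\fr{gl}(\fr m)$ of $\nabla$ and to reduce total skew-symmetry of the torsion to a single algebraic identity for the ``diagonal'' values $\Lambda(X)X$. Since $\nabla$ is metric, $\Lambda(X)\in\fr{so}(\fr m)$ with respect to $g_t=\langle\,\cdot\,,\,\cdot\,\rangle$ for every $X\in\fr m$, and by (\ref{torsion}) the torsion $(3,0)$--tensor $T(X,Y,Z)=\langle\Lambda(X)Y-\Lambda(Y)X-[X,Y]_{\fr m},Z\rangle$ is automatically skew in its first two entries. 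Hence $T$ is a $3$--form if and only if $\langle T(X,Y),Y\rangle=0$ for all $X,Y\in\fr m$ (the symmetric-in-$(Y,Z)$ defect vanishes iff its diagonal does). Expanding and using $\langle\Lambda(X)Y,Y\rangle=0$ together with $\langle\Lambda(Y)X,Y\rangle=-\langle X,\Lambda(Y)Y\rangle$, this collapses to
\[
\langle\Lambda(Y)Y,X\rangle=\langle[X,Y]_{\fr m},Y\rangle,\qquad X,Y\in\fr m .
\]

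First I would evaluate the right-hand side explicitly. Writing $g_t=B+(2t-1)B|_{\fr m_2}$, using $B(\fr k,\fr m)=0$, the inclusions (\ref{incl}) (so that $[X,Y]_{\fr m_2}=[X_1,Y_1]_{\fr m_2}$ and $[Y_1,Y_2]\in\fr m_1$), and the $\ad$--invariance of $B$, one obtains
\[
\langle[X,Y]_{\fr m},Y\rangle=(2t-1)\,B\big([X_1,Y_1]_{\fr m_2},Y_2\big)=(2t-1)\,B\big(X_1,[Y_1,Y_2]\big).
\]
As $g_t=B$ on $\fr m_1$, the torsion condition becomes the clean identity
\[
\Lambda(Y)Y=(2t-1)\,[Y_1,Y_2],\qquad Y\in\fr m ,
\]
which is the technical heart of the statement. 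The backward implication is then immediate: for $t=\tfrac12$ the right-hand side vanishes, so the hypothesis $\Lambda(X)X=0$ is exactly the condition that $T$ be totally skew-symmetric, and $T\neq0$ since its canonical part $-[X_1,Y_1]_{\fr m_2}$ survives.

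For the forward implication I would first specialise the identity to $Y\in\fr m_1$ and to $Y\in\fr m_2$; since $[\fr m_1,\fr m_1]_{\fr m_1}=0$ and $[\fr m_2,\fr m_2]\subset\fr k$, this gives $\Lambda(Y_1)Y_1=0=\Lambda(Y_2)Y_2$, and polarising the mixed part yields $\Lambda(X_1)Y_2+\Lambda(Y_2)X_1=(2t-1)[X_1,Y_2]\in\fr m_1$. The \emph{main obstacle} is to conclude from here that $t=\tfrac12$, equivalently that $\Lambda(X)X\equiv0$. The delicate feature is that the symmetric part $\Lambda(X)X$ of any metric connection with skew torsion is intrinsic to $g_t$: from $\nabla=\nabla^g+\tfrac12T$ one gets $\Lambda(X)X=\Lambda^g(X)X$, and the Koszul formula yields $\Lambda^g(X)X=(2t-1)[X_1,X_2]$, in agreement with the identity above. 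Thus skew-symmetry is compatible with the Nomizu normalisation $\Lambda(X)X=0$ (shared geodesics with $\nabla^c$, i.e. the running condition $\Lambda(X)X=0$ of this paper) precisely when $(2t-1)[\fr m_1,\fr m_2]=0$, that is $t=\tfrac12$, because $[\fr m_1,\fr m_2]\neq0$ under (\ref{incl}).

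I therefore expect the genuinely subtle step to be this normalisation: forcing the mixed component $\Lambda(X_1)Y_2+\Lambda(Y_2)X_1$ to be consistent only with $t=\tfrac12$. To carry it out rigorously I would invoke $\Ad(K)$--equivariance of $\Lambda$ and Schur's lemma for the \emph{inequivalent} irreducible modules $\fr m_1,\fr m_2$ to pin down the admissible $K$--maps $\fr m_1\otimes\fr m_2\to\fr m_1$, and then use the orthogonal splitting ${\cal A}^g={\cal A}_1\oplus{\cal A}_2\oplus{\cal A}_3$ (with $n\geq3$, so that the three-summand decomposition is available) to separate the purely skew-symmetric type ${\cal A}_2$ from the vectorial type ${\cal A}_1$; tracking the trace map $\Phi$ on the remaining components is where I anticipate the real work and the exclusion of any non-normalised exotic solution.
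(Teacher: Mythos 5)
Your proposal is correct and reaches the theorem by essentially the same mechanism as the paper, but it is organised differently and in one place more cleanly. The paper keeps all three slots of the tensor $\fr{T}(X,Y,Z)=T(X,Y,Z)+T(X,Z,Y)$ and runs a case-by-case analysis over the possible distributions of $X,Y,Z$ among $\fr{m}_1,\fr{m}_2$, extracting $t=1/2$ by comparing the condition obtained for $X\in\fr{m}_1$ (namely $\Lambda(Y)Z+\Lambda(Z)Y=(2t-1)[Y,Z]$ for $Y\in\fr{m}_1$, $Z\in\fr{m}_2$) with the one obtained for $X\in\fr{m}_2$ (namely $\Lambda(Y)Z+\Lambda(Z)Y=0$). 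You instead polarise first, reducing everything to the single diagonal identity $\Lambda(Y)Y=(2t-1)[Y_1,Y_2]$; your computation of $\langle [X,Y]_{\fr{m}},Y\rangle$ via (\ref{incl}), $B(\fr{k},\fr{m})=0$ and $\ad$-invariance is correct and recovers exactly the content of the paper's case analysis in one stroke. Your treatment of the forward direction is, if anything, the more scrupulous one: you observe that total skew-symmetry forces $\Lambda(X)X=\Lambda^{g_t}(X)X=(2t-1)[X_1,X_2]$ (consistent with Proposition \ref{wang}), so that the normalisation $\Lambda(X)X=0$ is what pins down $t=1/2$ via $[\fr{m}_1,\fr{m}_2]\neq 0$ — whereas the paper's two displayed equations in its ``system'' are really the $\fr{m}_1$- and $\fr{m}_2$-components of one and the same identity, and the hypothesis $\Lambda(X)X=0$ is doing the work there too. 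Two small remarks: your closing paragraph invoking Schur's lemma and the decomposition ${\cal A}_1\oplus{\cal A}_2\oplus{\cal A}_3$ is unnecessary — the argument is already complete once you have $\Lambda(X)X=(2t-1)[X_1,X_2]$ together with the hypothesis $\Lambda(X)X=0$, so there is no ``exotic solution'' left to exclude; and the non-vanishing of $[\fr{m}_1,\fr{m}_2]$ is not literally part of (\ref{incl}) but an implicit standing assumption that the paper's own proof also relies on.
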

     \begin{proof}
   The  tensor field $T(X, Y, Z)=g_{t}(T(X, Y), Z)$  which occurs  from $T$ by contraction with $g_{t}$  is already skew-symmetric with respect  to $X, Y$.  Set $\fr{T}(X, Y, Z):=T(X, Y, Z)+T(X, Z, Y)$. Then, the condition $T\in\Lambda^{3}(\fr{m})$   is equivalent to    $\fr{T}(X, Y, Z)=0$,  for any  $X, Y, Z\in\fr{m}$.
  We compute
                  \begin{eqnarray*}
\fr{T}(X, Y, Z)&=&\underbrace{g_{t}(\Lambda(X)Y, Z)}_{(\al')}-\underbrace{g_{t}(\Lambda(Y)X,  Z)}_{(\be')}-\underbrace{g_{t}([X, Y]_{\fr{m}}, Z)}_{(\gamma')}\\
&&+\underbrace{g_{t}(\Lambda(X)Z, Y)}_{(\delta')}-\underbrace{g_{t}(\Lambda(Z)X,  Y)}_{(\varepsilon')}-\underbrace{g_{t}([X, Z]_{\fr{m}}, Y)}_{(\zeta')},
\end{eqnarray*}
for  any $X, Y, Z\in\fr{m}$. Since $\Lambda(X)\in\fr{so}(\fr{m})$ for all $X\in\fr{m}$, we see  that the terms  $(\al')$ and $(\delta')$ cancel one another: $g_{t}(\Lambda(X)Y, Z)+g_{t}(\Lambda(X)Z, Y)=-g_{t}(\Lambda(X)Z, Y)+g_{t}(\Lambda(X)Z, Y)=0$.  For the same reason  it is
\begin{eqnarray*}
 -(\be')-(\varepsilon')&=&-g_{t}(\Lambda(Y)X,  Z)-g_{t}(\Lambda(Z)X,  Y)=g_{t}(\Lambda(Y)Z, X)+g_{t}(\Lambda(Z)Y, X)\\
 &=&g_{t}(\Lambda(Y)Z+\Lambda(Z)Y, X).
\end{eqnarray*}
Hence,    the equation  $\fr{T}(X, Y, Z)=0$ becomes equivalent to  
      \begin{equation*}\label{clev}
     \underbrace{g_{t}(\Lambda(Y)Z+\Lambda(Z)Y, X)}_{(\al)}-\underbrace{g_{t}([X, Y]_{\fr{m}}, Z)}_{(\be)}
-\underbrace{g_{t}([X, Z]_{\fr{m}}, Y)}_{(\gamma)}=0, \quad \forall   \  X, Y, Z\in\fr{m}.
      \end{equation*}
 By using this formula one needs   to examine each possible case separately.  Consider for example some non-zero vectors  $X, Y\in\fr{m}_{1}$  and $Z\in\fr{m}_{2}$.  Then 
 \begin{eqnarray*}
-(\be)-(\gamma)&=&g_{t}([Y, X]_{\fr{m}_{2}}, Z)+g_{t}([Z, X], Y)=2tB([Y, X]_{\fr{m}_{2}}, Z)+B([Z, X], Y)\\
&=&-(2t-1)B([Y, Z], X)=-g_{t}((2t-1)[Y, Z], X).
\end{eqnarray*}
Hence the skew-symmetry of $T$ reduces to the equation $g_{t}(\Lambda(Y)Z+\Lambda(Z)Y-(2t-1)[Y, Z], X)=0$, for any  $X\in\fr{m}_{1}$, $Y\in\fr{m}_{1}$ and $Z\in\fr{m}_{2}$, which means that $\Lambda(Y)Z+\Lambda(Z)Y=(2t-1)[Y, Z]$, for all $Y\in\fr{m}_{1}, Z\in\fr{m}_{2}$.
  Let now $X\in\fr{m}_{2}$, $Y\in\fr{m}_{1}$ and $Z\in\fr{m}_{2}$.   Then we get  $\Lambda(Y)Z +\Lambda(Z)Y=0$, for any  $Y\in \fr{m}_{1}, Z\in\fr{m}_{2}$, and this gives rise to the following system of  equations
  \[
\Big\{\Lambda(Y)Z+\Lambda(Z)Y=(2t-1)[Y, Z], \quad  \Lambda(Y)Z +\Lambda(Z)Y=0, \quad \forall \ Y\in\fr{m}_{1},   Z\in\fr{m}_{2}\Big\}.
\]
Thus,   it must be $t=1/2$ and $\Lambda(Y)Z +\Lambda(Z)Y=0$,  for all  $Y\in\fr{m}_{1}$, $Z\in\fr{m}_{2}$. One can obtain the same result   by comparing the cases  $X\in\fr{m}_{1}, Y\in\fr{m}_{2}, Z\in\fr{m}_{1}$ and $X, Y\in\fr{m}_{2}, Z\in\fr{m}_{1}$, respectively, i.e. $t=1/2$ and $\Lambda(Y)Z +\Lambda(Z)Y=0$, for any $Y\in\fr{m}_{2}$, $Z\in\fr{m}_{1}$.  Similar are treated the other cases.   The converse  direction  follows by  \cite[Lem.~2.1]{Agr03},  since for    $t=1/2$ we obtain the Killing metric $g_{1/2}$ which is  naturally reductive.
\end{proof}

 \begin{proposition}\label{wang}\textnormal{(\cite[Lem.~10, p.~141]{Baum})}
    The Nomizu map $\Lambda_{t} : \fr{m}\to\fr{so}(\fr{m})$ associated to the Levi-Civita  connection $\nabla^{t}(\equiv\nabla^{g_{t}})$ $(t>0)$ of  the homogeneous Riemannian manifold  $(M=G/K,  \fr{m}_{1}\oplus\fr{m}_{2}, g_{t})$  is defined by the following relations
      \[
    \begin{tabular}{ll}
     $\Lambda_{t}(\fr{m}_{1})\fr{m}_{1}=(1/2)[\fr{m}_{1}, \fr{m}_{1}]_{\fr{m}_{2}}$, & $\Lambda_{t}(\fr{m}_{2})\fr{m}_{1}=(1-t)[\fr{m}_{2}, \fr{m}_{1}]$, \\
     $\Lambda_{t}(\fr{m}_{1})\fr{m}_{2}=t[\fr{m}_{1}, \fr{m}_{2}]$, & $\Lambda_{t}(\fr{m}_{2})\fr{m}_{2}=0$.
     \end{tabular}
    \]
   \end{proposition}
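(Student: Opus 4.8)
The plan is to read off $\Lambda_{t}$ from the standard Koszul formula for the Nomizu map of the Levi-Civita connection on a reductive homogeneous space, namely
\[
2g_{t}\big(\Lambda_{t}(X)Y, Z\big)=g_{t}([X, Y]_{\fr{m}}, Z)-g_{t}([Y, Z]_{\fr{m}}, X)+g_{t}([Z, X]_{\fr{m}}, Y), \quad X, Y, Z\in\fr{m},
\]
and to evaluate it on the four combinations of $X, Y$ lying in $\fr{m}_{1}$ or $\fr{m}_{2}$. Throughout I would use three structural facts: that $g_{t}$ is block-diagonal, pairing $\fr{m}_{1}$ with itself through $B$ and $\fr{m}_{2}$ with itself through $2t\cdot B$ while annihilating the cross terms (the latter because $\fr{m}_{1}, \fr{m}_{2}$ are inequivalent irreducible, so $B(\fr{m}_{1}, \fr{m}_{2})=0$ by Schur); that $B$ is $\ad$-invariant, $B([A, B], C)=B(A, [B, C])$, with $B(\fr{k}, \fr{m})=0$; and the bracket relations (\ref{incl}). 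For each choice of $X, Y$ I would pin down $\Lambda_{t}(X)Y$ by testing the Koszul identity separately against $Z\in\fr{m}_{1}$ and $Z\in\fr{m}_{2}$ and invoking nondegeneracy of $B$ on each summand to extract the two components.

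Take first $X, Y\in\fr{m}_{1}$, so $[X, Y]_{\fr{m}}\in\fr{m}_{2}$ by (\ref{incl}). Testing against $Z\in\fr{m}_{1}$ makes all three terms vanish for degree reasons, so $\Lambda_{t}(X)Y$ has no $\fr{m}_{1}$-part; testing against $Z\in\fr{m}_{2}$, the first term gives $2t\,B([X, Y]_{\fr{m}_{2}}, Z)$ while $\ad$-invariance and symmetry of $B$ force the last two terms to cancel. Since $\Lambda_{t}(X)Y$ and $Z$ both sit in $\fr{m}_{2}$, the left-hand side carries the same factor $2t$, which cancels and leaves $\Lambda_{t}(X)Y=\tfrac12[X, Y]_{\fr{m}_{2}}$. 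The case $X\in\fr{m}_{1}, Y\in\fr{m}_{2}$ (now $[X, Y]_{\fr{m}}\in\fr{m}_{1}$) runs in parallel: $\Lambda_{t}(X)Y$ lands in $\fr{m}_{1}$, and here the surviving contraction on the right is an $\fr{m}_{2}$-pairing supplying a factor $2t$ while the left-hand side is an $\fr{m}_{1}$-pairing carrying none, so $\Lambda_{t}(X)Y=t[X, Y]$. For $X, Y\in\fr{m}_{2}$ one has $[X, Y]\in\fr{k}$, hence $[X, Y]_{\fr{m}}=0$, and testing against both summands annihilates every term, giving $\Lambda_{t}(\fr{m}_{2})\fr{m}_{2}=0$. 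The remaining relation I would obtain without a fresh computation, from torsion-freeness $\Lambda_{t}(X)Y-\Lambda_{t}(Y)X=[X, Y]_{\fr{m}}$: for $X\in\fr{m}_{2}, Y\in\fr{m}_{1}$ this reads $\Lambda_{t}(X)Y=\Lambda_{t}(Y)X+[X, Y]=-t[X, Y]+[X, Y]=(1-t)[X, Y]$.

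There is no genuine obstacle here; the proof is careful bookkeeping. The one point demanding attention is the asymmetric tracking of the factor $2t$ that the $\fr{m}_{2}$-block of $g_{t}$ attaches to some contractions but not others: it is precisely this that converts the uniform weights of the Koszul formula into the distinct coefficients $\tfrac12$, $t$, $1-t$, and $0$. As a consistency check on the coefficients I would verify in each case that the resulting $\Lambda_{t}(X)$ is skew with respect to $g_{t}$, so that $\nabla^{t}$ is indeed metric, and (for the cases not deduced via torsion) that the skew part $\Lambda_{t}(X)Y-\Lambda_{t}(Y)X$ recovers $[X, Y]_{\fr{m}}$.
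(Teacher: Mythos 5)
Your proof is correct: the paper gives no argument of its own for this proposition, citing it directly from \cite[Lem.~10, p.~141]{Baum}, and your derivation via the Koszul formula for the Nomizu map — testing each bracket against $Z\in\fr{m}_1$ and $Z\in\fr{m}_2$ separately and tracking the factor $2t$ on the $\fr{m}_2$-block — is precisely the standard computation that the cited reference carries out. The closing use of torsion-freeness to obtain the coefficient $1-t$ and the skew-symmetry check are both sound.
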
 
We fix finally some notation that will be used throughout this section. We shall denote by   $D_{i, j}$   the $(n\times n)$-matrix having $1$ in the $(i, j)$-entry and zeros elsewhere and we set   $E_{i, j}=-D_{i, j}+D_{j, i}$, with $1\leq i\neq j\leq n$. We also set $d_{i}:=\dim\fr{m}_{i}$ for any    $i=1, 2$ and  fix a $B$-orthonormal  basis of $\fr{m}$ adapted to the decomposition $\fr{m}=\fr{m}_{1}\oplus\fr{m}_{2}$, that is
$\{0\neq X_{i}\in\fr{m}_{1} : 1\leq i\leq d_{1}\}\sqcup\{0\neq Y_{k}\in\fr{m}_{2} : 1\leq k\leq d_{2}\}$,  such that $\fr{m}_{1}=\Span_{\bb{R}}\{X_{i}\}_{i=1}^{d_{1}}$, $\fr{m}_{2}=\Span_{\bb{R}}\{Y_{k}\}_{k=1}^{d_{2}}$, with $B(X_{i}, X_{j})=\delta_{i, j}$, $B(Y_{k}, Y_{l})=\delta_{k, l}$,  $B(X_{i}, Y_{k})=0$.  The associated  $g_{t}$-orthonormal bases are of the form
  \[
\fr{m}_{1}=\Span_{\bb{R}}\{V_{i}:=X_{i} : 1\leq i\leq d_{1}\}, \quad  \fr{m}_{2}=\Span_{\bb{R}}\{W_{k}:=Y_{k}/\sqrt{2t} : 1\leq k\leq d_{2}\}.
\]

 \subsection{A new family of $G$-invariant metric connections}
For a  new parameter $s\in\bb{R}$   consider the map
 $\Lambda^{\fr{m}}_{s, t} \equiv \Lambda_{s, t} : \fr{m}\to\fr{so}(\fr{m})$ with $\Lambda_{s, t}(X)Y:=s\cdot \Lambda_{t}(X)Y$, i.e.
   \begin{equation}\label{newc}
    \begin{tabular}{ll}
     $\Lambda_{s, t}(\fr{m}_{1})\fr{m}_{1}=(s/2)[\fr{m}_{1}, \fr{m}_{1}]_{\fr{m}_{2}}$, & $\Lambda_{s, t}(\fr{m}_{2})\fr{m}_{1}=s(1-t)[\fr{m}_{2}, \fr{m}_{1}]$, \\
     $\Lambda_{s, t}(\fr{m}_{1})\fr{m}_{2}=st[\fr{m}_{1}, \fr{m}_{2}]$, & $\Lambda_{s, t}(\fr{m}_{2})\fr{m}_{2}=0$.
     \end{tabular}
     \end{equation}
   Obviously, $\Lambda_{s, t}$ is  an  $\Ad(K)$-equivariant linear map such that   $\Lambda_{s, t}(X)\in\fr{so}(\fr{m})$ for any $X\in\fr{m}$. Thus, it induces  a   2-parameter family  of $G$-invariant metric connections $\{\nabla^{s, t} :   s\in\bb{R}, \  t\in\bb{R}_{+}\}$, which after identifying $\fr{m}=T_{o}G/K$, can be explicitly  described by
                   \begin{equation}\label{newcon}
      \nabla^{s, t}_{X}Y:= \nabla^{0}_{X}Y+\Lambda_{s, t}(X)Y=\nabla^{0}_{X}Y+s\cdot \Lambda_{t}(X)Y, \quad   \forall \ X, Y\in\fr{m}.      \end{equation}
   Notice that $\nabla^{s, t}$   joins  the canonical connection  $\nabla^{0, t}\equiv  \nabla^{0}\equiv\nabla^{c}$ $(s=0)$  and  the  Levi-Civita connection $\nabla^{1, t}\equiv\nabla^{t}$ $(s=1)$.    Now, using  (\ref{torsion}) it follows that
   \begin{lemma}\label{T}
      The torsion $T^{s, t}$   is given as follows:
   \[
   \begin{tabular}{ll}
   $T^{s, t}(\fr{m}_{1}, \fr{m}_{1})=(s-1)[\fr{m}_{1}, \fr{m}_{1}]_{\fr{m}_{2}}$, & $T^{s, t}(\fr{m}_{2}, \fr{m}_{1})=(s-1)[\fr{m}_{2}, \fr{m}_{1}]$, \\
   $T^{s, t}(\fr{m}_{1}, \fr{m}_{2})=(s-1)[\fr{m}_{1}, \fr{m}_{2}]$, & $T^{s, t}(\fr{m}_{2}, \fr{m}_{2})=0$.
   \end{tabular}
   \]
      Consequently, the  $3$-tensor $T^{s, t}(X, Y, Z):=g_{t}(T^{s, t}(X, Y), Z)$ is such that
  \begin{equation}\label{tor1}
  \begin{tabular}{l}
  $T^{s, t}(\fr{m}_{1}, \fr{m}_{1}, \fr{m}_{2})=2t(s-1)B([\fr{m}_{1}, \fr{m}_{1}]_{\fr{m}_{2}}, \fr{m}_{2})$,\\
  $T^{s, t}(\fr{m}_{2}, \fr{m}_{1}, \fr{m}_{1})=(s-1)B([\fr{m}_{2}, \fr{m}_{1}], \fr{m}_{1}),$ \\
  $T^{s, t}(\fr{m}_{1}, \fr{m}_{2}, \fr{m}_{1})=(s-1)B([\fr{m}_{1}, \fr{m}_{2}], \fr{m}_{1}),$ 
    \end{tabular}
  \end{equation}
     \noindent  and   all the other combinations are zero.
     \end{lemma}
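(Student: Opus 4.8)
The plan is to read the torsion off directly from the defining relations (\ref{torsion}) and (\ref{newc}), treating $X$ and $Y$ according to which of the two summands $\fr{m}_1,\fr{m}_2$ they lie in. Writing $T^{s,t}(X,Y)=\Lambda_{s,t}(X)Y-\Lambda_{s,t}(Y)X-[X,Y]_{\fr{m}}$, the only external inputs needed are the bracket inclusions (\ref{incl}), which pin down the $\fr{m}$-component $[\cdot,\cdot]_{\fr{m}}$ in each case, together with the explicit values of $\Lambda_{s,t}$ recorded in (\ref{newc}). Since $T^{s,t}$ is skew in its two arguments, it suffices to handle the block-types $(\fr{m}_1,\fr{m}_1)$, $(\fr{m}_2,\fr{m}_1)$ and $(\fr{m}_2,\fr{m}_2)$, the remaining mixed type following by antisymmetry.

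For $X,Y\in\fr{m}_1$ the two Nomizu terms are themselves antisymmetric, $\Lambda_{s,t}(X)Y=-\Lambda_{s,t}(Y)X=(s/2)[X,Y]_{\fr{m}_2}$, so they add to $s[X,Y]_{\fr{m}_2}$; by (\ref{incl}) one has $[\fr{m}_1,\fr{m}_1]\subset\fr{k}\oplus\fr{m}_2$, whence $[X,Y]_{\fr{m}}=[X,Y]_{\fr{m}_2}$, and subtracting it lowers the coefficient to $(s-1)$. For the mixed case $X\in\fr{m}_2$, $Y\in\fr{m}_1$ the interesting point is that the two Nomizu contributions, $\Lambda_{s,t}(X)Y=s(1-t)[X,Y]$ and $-\Lambda_{s,t}(Y)X=st[X,Y]$, combine so that the $t$-dependence cancels, leaving $s[X,Y]$; since $[\fr{m}_2,\fr{m}_1]\subset\fr{m}_1\subset\fr{m}$ the subtracted bracket is again $[X,Y]$, yielding $(s-1)[X,Y]$, and the case $X\in\fr{m}_1,Y\in\fr{m}_2$ follows from $T^{s,t}(X,Y)=-T^{s,t}(Y,X)$. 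Finally, for $X,Y\in\fr{m}_2$ both Nomizu terms vanish by (\ref{newc}) while $[\fr{m}_2,\fr{m}_2]\subset\fr{k}$ forces $[X,Y]_{\fr{m}}=0$, so $T^{s,t}(\fr{m}_2,\fr{m}_2)=0$. This establishes the first table.

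The second table then follows by contracting with the block-diagonal metric $g_t=B|_{\fr{m}_1\times\fr{m}_1}+2t\,B|_{\fr{m}_2\times\fr{m}_2}$. Since $g_t$ pairs nontrivially only within a single summand, $T^{s,t}(X,Y,Z)=g_t(T^{s,t}(X,Y),Z)$ is nonzero precisely when $Z$ lies in the same summand as $T^{s,t}(X,Y)$; by the first table this value sits in $\fr{m}_2$ when $X,Y\in\fr{m}_1$ and in $\fr{m}_1$ when exactly one of $X,Y$ lies in $\fr{m}_2$, and vanishes when $X,Y\in\fr{m}_2$, so only the three orderings $(\fr{m}_1,\fr{m}_1,\fr{m}_2)$, $(\fr{m}_2,\fr{m}_1,\fr{m}_1)$ and $(\fr{m}_1,\fr{m}_2,\fr{m}_1)$ survive. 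The one point requiring care is the normalization: in the first of these both $[\fr{m}_1,\fr{m}_1]_{\fr{m}_2}$ and $Z$ lie in $\fr{m}_2$, so the contraction carries the factor $2t$, whereas in the other two the bracket lands in $\fr{m}_1$ and is paired against $\fr{m}_1$, so $g_t$ reduces to $B$ with no factor $2t$; this accounts for the asymmetric coefficients in (\ref{tor1}). There is no genuine obstacle beyond this bookkeeping and the $t$-cancellation in the mixed block noted above — the whole argument is a finite case analysis.
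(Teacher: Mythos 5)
Your proof is correct and follows exactly the route the paper intends: the paper simply states that the lemma "follows using (\ref{torsion})" applied to the Nomizu map (\ref{newc}), and your case-by-case computation (including the cancellation of the $t$-dependence in the mixed block and the factor $2t$ from pairing within $\fr{m}_2$) is precisely the omitted verification. Nothing to add.
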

   Although $T^{s}(X, Y)=-(s-1)T^{0}(X, Y)$, the induced 3-tensor  $T^{s, t}(X, Y, Z)$ is not a 3-form for any $s, t$;    Theorem \ref{chato} states  that  this is   possible only for $t=1/2$ and under the further assumption $\Lambda_{s, t}(X)X=0$ for   any $X\in\fr{m}$. Writing $\fr{m}\ni X=V+Z$ with $V\in\fr{m}_{1}, Z\in\fr{m}_{2}$,  the  latter condition reduces to $s(2t-1)[V, Z]=0$. This is always satisfied for  $t=1/2$, i.e. $\Lambda_{s, \frac{1}{2}}(X)X=0$ for any  $X\in\fr{m}$, as required.   In fact, by (\ref{tor1}) and for  non-zero vectors $X\in\fr{m}_{1}$, $Y\in\fr{m}_{1}$ and $Z\in\fr{m}_{2}$, it follows that the equation     $T^{s, t}(X, Y, Z)+T^{s, t}(X, Z, Y)=0$ is equivalent to  $(s-1)(2t-1)B([X, Y]_{\fr{m}_{2}}, Z)=0$.      Consequently, there are two possibilities: $s=1$ or $t=1/2$. The first   fails, since  it   corresponds to the Riemannian connection. Hence $t=1/2$ which corresponds to the Killing metric.  Similarly are treated the other cases.  We conclude that
        \begin{corollary}\label{NEWTORSION}
 For any $s\neq 1$  it holds that $0\neq T^{s, t}(X, Y, Z)\in\Lambda^{3}(\fr{m})$ for any $X, Y, Z\in\fr{m}$  $\Leftrightarrow$  $t=1/2$.
 \end{corollary}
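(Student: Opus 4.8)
The plan is to obtain the statement as a direct specialization of Theorem \ref{chato} to the two-parameter family $\nabla^{s,t}$, feeding in only the torsion components of Lemma \ref{T} (equivalently (\ref{tor1})) together with one short bracket computation. Since $\nabla^{s,t}$ is a $G$-invariant metric connection with Nomizu map $\Lambda_{s,t}$, Theorem \ref{chato} applies as soon as its torsion is nonzero; and by Lemma \ref{T} one has $T^{s,t}(X,Y)=-(s-1)T^{c}(X,Y)$, so $T^{s,t}\neq 0$ for every $s\neq 1$ (the canonical torsion $T^{c}$ being nontrivial under (\ref{incl}), which amounts to $[\fr{m}_{1},\fr{m}_{1}]_{\fr{m}_{2}}\neq 0$). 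Thus Theorem \ref{chato} reduces the claim to showing that the two conditions it produces --- namely $t=1/2$ together with $\Lambda_{s,t}(X)X=0$ for all $X\in\fr{m}$ --- are in fact equivalent to the single condition $t=1/2$.

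First I would dispose of the auxiliary condition $\Lambda_{s,t}(X)X=0$ by direct computation. Writing $X=V+Z$ with $V\in\fr{m}_{1}$, $Z\in\fr{m}_{2}$ and using (\ref{newc}), the diagonal contributions $\Lambda_{s,t}(V)V$ and $\Lambda_{s,t}(Z)Z$ both vanish (since $[V,V]=0$ and $\Lambda_{s,t}(\fr{m}_{2})\fr{m}_{2}=0$), leaving
\[
\Lambda_{s,t}(X)X=\Lambda_{s,t}(V)Z+\Lambda_{s,t}(Z)V=st[V,Z]+s(1-t)[Z,V]=s(2t-1)[V,Z].
\]
In particular $\Lambda_{s,1/2}(X)X=0$ identically, so at $t=1/2$ the second requirement of Theorem \ref{chato} is automatic. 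Combined with the sufficiency direction of that theorem (equivalently \cite[Lem.~2.1]{Agr03}, valid because $g_{1/2}=g_{B}$ is naturally reductive), this gives the ``$\Leftarrow$'' implication: for $t=1/2$ the tensor $T^{s,t}$ is totally skew-symmetric, and it is nonzero precisely because $s\neq 1$.

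For ``$\Rightarrow$'' I would read the forced value of $t$ off (\ref{tor1}) rather than merely quoting Theorem \ref{chato}. Total skew-symmetry means $T^{s,t}(X,Y,Z)+T^{s,t}(X,Z,Y)=0$; taking $X,Y\in\fr{m}_{1}$, $Z\in\fr{m}_{2}$ and combining the $(\fr{m}_{1},\fr{m}_{1},\fr{m}_{2})$ and $(\fr{m}_{1},\fr{m}_{2},\fr{m}_{1})$ entries of (\ref{tor1}) via the $\ad$-invariance identity $B([X,Z],Y)=-B([X,Y]_{\fr{m}_{2}},Z)$, this collapses to
\[
(s-1)(2t-1)\,B([X,Y]_{\fr{m}_{2}},Z)=0 .
\]
Since $s\neq 1$ and $[\fr{m}_{1},\fr{m}_{1}]_{\fr{m}_{2}}\neq 0$, one may choose $X,Y,Z$ making the pairing nonzero, forcing $t=1/2$; the remaining index patterns either vanish identically or reproduce the same constraint.

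I do not anticipate a genuine obstacle, the result being a specialization of Theorem \ref{chato}. The two points demanding care are, first, the nondegeneracy $[\fr{m}_{1},\fr{m}_{1}]_{\fr{m}_{2}}\neq 0$: one checks that its failure would force $[\fr{m}_{1},\fr{m}_{2}]=0$ as well (again by $B$-invariance, since $B([X,Y]_{\fr{m}_{2}},Z)=B([Y,Z],X)$), whence $T^{s,t}\equiv 0$ by Lemma \ref{T}; thus this nondegeneracy is exactly what is needed both for the ``$\neq 0$'' assertion and to legitimately invoke Theorem \ref{chato} and to keep the displayed constraint nonvacuous. Second, the factor $(2t-1)$ in $\Lambda_{s,t}(X)X$ is what makes the auxiliary condition $\Lambda(X)X=0$ collapse exactly at $t=1/2$, so that no restriction on $t$ beyond $t=1/2$ survives.
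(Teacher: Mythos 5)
Your proposal is correct and follows essentially the same route as the paper: the text preceding Corollary \ref{NEWTORSION} likewise specializes Theorem \ref{chato}, computes $\Lambda_{s,t}(X)X=s(2t-1)[V,Z]$ for $X=V+Z$, and verifies directly from (\ref{tor1}) that skew-symmetry reduces to $(s-1)(2t-1)B([X,Y]_{\fr{m}_2},Z)=0$. Your extra remark on the nondegeneracy $[\fr{m}_1,\fr{m}_1]_{\fr{m}_2}\neq 0$ is a careful addition the paper leaves implicit, but it does not change the argument.
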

 Let us describe now the algebraic  type of the torsion $T^{s, t}$  for any $s\in\bb{R}, t>0$.  It is useful to present the  endomorphism $\Lambda^{s, t}(X)\in\fr{so}(\fr{m})$ $(X\in\fr{m})$ in terms of the matrices $E_{i, j}$. This is given by 
         \begin{eqnarray*}
 (a) \quad \Lambda^{s, t}(X)&=&\frac{st}{\sqrt{2t}}\sum_{i, k}B([X, X_{i}]_{\fr{m}_{2}}, Y_{k})E_{i, k}, \quad \text{for} \ \ X\in\fr{m}_{1},\\
 (b) \quad \Lambda^{s, t}(X)&=&\frac{s(1-t)}{2}\sum_{i, j}B([X, X_{i}], X_{j})E_{i, j}, \quad \text{for} \ \ X\in\fr{m}_{2}.
 \end{eqnarray*}


  \begin{theorem}
For any $s\in\bb{R}$ and  $t\in\bb{R}_{+}-\{1/2\}$, the 2-parameter family of  $G$-invariant metric connections $\nabla^{s, t}$ has torsion of mixed type ${\cal{A}}_{2}\oplus{\cal{A}}_{3}$. For $t=1/2$  it reduces to 1-parameter family   of metric connections with skew-torsion. 
 \end{theorem}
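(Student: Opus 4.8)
The plan is to realize $\nabla^{s,t}$ as a perturbation of the Levi-Civita connection and then read off the algebraic type of its torsion from the trace map $\Phi$ and from Corollary \ref{NEWTORSION}. First I would write $\nabla^{s,t}_{X}Y=\nabla^{t}_{X}Y+A(X,Y)$, where by (\ref{newcon}) and Proposition \ref{wang} the difference tensor is $A=(s-1)\Lambda_{t}$, i.e. $A(X,Y)=(s-1)\Lambda_{t}(X)Y$. Since both $\nabla^{s,t}$ and $\nabla^{t}$ are metric and $\Lambda_{t}(X)\in\fr{so}(\fr{m})$, the tensor $A$ lies in ${\cal A}^{g}$ (it is skew in its last two slots), so its $\Oo(n)$-type is governed by the decomposition ${\cal A}^{g}={\cal A}_{1}\oplus{\cal A}_{2}\oplus{\cal A}_{3}$.

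The decisive step is to rule out the vectorial component ${\cal A}_{1}$, which amounts to showing $\Phi(A)=0$. Working in the $g_{t}$-orthonormal basis $\{V_{i}\}\sqcup\{W_{k}\}$ fixed above, I would compute $\Phi(A)(Z)=(s-1)\big(\sum_{i}g_{t}(\Lambda_{t}(V_{i})V_{i},Z)+\sum_{k}g_{t}(\Lambda_{t}(W_{k})W_{k},Z)\big)$. By Proposition \ref{wang} one has $\Lambda_{t}(V_{i})V_{i}=\tfrac{1}{2}[V_{i},V_{i}]_{\fr{m}_{2}}=0$ and $\Lambda_{t}(W_{k})W_{k}=0$, since $\Lambda_{t}(\fr{m}_{2})\fr{m}_{2}=0$; hence $\sum_{i}\Lambda_{t}(e_{i})e_{i}=0$ and $\Phi(A)\equiv 0$. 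As $\Phi$ vanishes exactly on ${\cal A}_{2}\oplus{\cal A}_{3}$, this already gives $T^{s,t}\in{\cal A}_{2}\oplus{\cal A}_{3}$ for every $s,t$, i.e. there is no vectorial part. The same conclusion can be reached, as indicated in the introduction, through the Dirac-operator formalism, where the vectorial stratum is the part that modifies the connection Dirac operator by a zeroth-order term; but the direct evaluation of $\Phi$ is the most economical route.

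It then remains to separate the two regimes using Corollary \ref{NEWTORSION}. For $t\neq 1/2$ (and $s\neq 1$, the Levi-Civita case being trivially torsion-free), Corollary \ref{NEWTORSION} gives $T^{s,t}\notin\Lambda^{3}(\fr{m})$, so $A\notin{\cal A}_{2}$; combined with $\Phi(A)=0$ this forces a nonzero Cartan component, i.e. ${\cal A}_{3}\neq 0$, so the type is genuinely mixed ${\cal A}_{2}\oplus{\cal A}_{3}$. To confirm that the skew stratum is also present I would evaluate the cyclic sum on $X,Y\in\fr{m}_{1}$, $Z\in\fr{m}_{2}$: using (\ref{tor1}) together with the $\ad$-invariance of $B$ (which yields $B([Y,Z],X)=B([Z,X],Y)=B([X,Y]_{\fr{m}_{2}},Z)$) one obtains $\fr{S}^{X,Y,Z}T^{s,t}(X,Y,Z)=2(s-1)(t+1)B([X,Y]_{\fr{m}_{2}},Z)$, which is nonzero whenever $[\fr{m}_{1},\fr{m}_{1}]_{\fr{m}_{2}}\neq 0$ and $s\neq 1$. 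Finally, for $t=1/2$ Corollary \ref{NEWTORSION} gives $0\neq T^{s,1/2}\in\Lambda^{3}(\fr{m})={\cal A}_{2}$, so the family collapses to the pure skew-symmetric type and yields the announced $1$-parameter family $\{\nabla^{s,1/2}:s\in\bb{R}\}$.

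The main obstacle is conceptual rather than computational: one must keep straight that the $\Oo(n)$-classification applies to the difference tensor $A=\nabla^{s,t}-\nabla^{t}$ (skew in its last two slots), while Corollary \ref{NEWTORSION} is phrased for the torsion $3$-tensor, and that the two viewpoints agree on the vectorial and skew strata. Once the identification $A=(s-1)\Lambda_{t}$ and the vanishing $\sum_{i}\Lambda_{t}(e_{i})e_{i}=0$ are in place, everything else is a short computation with (\ref{tor1}) and the invariance of $B$.
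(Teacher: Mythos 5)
Your argument is correct and follows the paper's own first proof: both rest on the Pf\"affle--Stephan criterion that $A=\nabla^{s,t}-\nabla^{t}\in{\cal A}_{2}\oplus{\cal A}_{3}$ if and only if $\Phi(A)=0$, applied to the difference tensor $A=(s-1)\Lambda_{t}$. Your evaluation is in fact slightly more economical, since each diagonal term $\Lambda_{t}(e_i)e_i$ vanishes individually by Proposition \ref{wang} (whereas the paper computes the equivalent divergence difference $\Div^{s,t}-\Div^{t}$, and separately gives a second proof via self-adjointness of the Dirac operator), and you add the worthwhile check, left implicit in the paper, that for $t\neq 1/2$ and $s\neq 1$ both the ${\cal A}_{2}$ and ${\cal A}_{3}$ components are genuinely nonzero, via the cyclic sum of $T^{s,t}$ and Corollary \ref{NEWTORSION}.
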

 \begin{proof}
{\noindent \bf 1st way.}  It is known that  given a  connected Riemannian manifold $(M^{n}, g)$  endowed with a metric connection $\nabla$, then the condition $A:=\nabla-\nabla^{g}\in{\cal{A}}_{2}\oplus{\cal{A}}_{3}$ is equivalent to say that $\Phi(A)(X)=0$ for  any vector field $X$ on $M$, or in other words  that the $\nabla$-divergence of   $X$   coincides with the Riemannian divergence \cite[Corol.~4.6]{Pfa}.  Therefore, it is sufficient to show that $\Div^{s, t}(X)-\Div^{t}(X)=0$, for any $X\in\fr{m}$, where $\Div^{s, t}$ and $\Div^{t}$ are the divergences with respect to $\nabla^{s, t}$ and  $\nabla^{t}$, respectively. Because $\fr{m}=\fr{m}_{1}\oplus\fr{m}_{2}$ is an orthogonal splitting, we write   $X=X^{\al}+X^{\be}$  for some $X^{\al}\in\fr{m}_{1}$ and $X^{\be}\in\fr{m}_{2}$.  Set $\fr{D}(X):=\Div^{s, t}(X)-\Div^{t}(X)$. Since  $A^{s, t}:=\nabla^{s, t}-\nabla^{t}=(s-1)\Lambda^{t}$,  an easy computation proves our claim:
\begin{eqnarray*}
\fr{D}(X)&=& \big(\Div^{s, t}(X^{\al})-\Div^{t}(X^{\al})\big)+\big(\Div^{s, t}(X^{\be})-\Div^{t}(X^{\be})\big)\\
 &=&\sum_{1\leq i \leq d_{1}}g_{t}(X_{i}, \nabla^{s, t}_{X_{i}}X^{\al}-\nabla^{t}_{X_{i}}X^{\al})+\sum_{1\leq k \leq d_{2}}g_{t}(W_{k}, \nabla^{s, t}_{W_{k}}X^{\al}-\nabla^{t}_{W_{k}}X^{\al})\\
&&+ \sum_{1\leq i \leq d_{1}}g_{t}(X_{i}, \nabla^{s, t}_{X_{i}}X^{\be}-\nabla^{t}_{X_{i}}X^{\be})+\sum_{1\leq k \leq d_{2}}g_{t}(W_{k}, \nabla^{s, t}_{W_{k}}X^{\be}-\nabla^{t}_{W_{k}}X^{\be})\\
&=& \sum_{i}g_{t}(X_{i}, (s-1)\Lambda^{t}(X_{i})X^{\al})+\sum_{k}g_{t}(W_{k}, (s-1)\Lambda^{t}(W_{k})X^{\al})\\
&&+ \sum_{i}g_{t}(X_{i}, (s-1)\Lambda^{t}(X_{i})X^{\be})+\sum_{k}g_{t}(W_{k}, (s-1)\Lambda^{t}(W_{k})X^{\be})\\
&=& \sum_{i}g_{t}(X_{i}, \frac{(s-1)}{2}[X_{i}, X^{\al}]_{\fr{m}_{2}})+\sum_{k}g_{t}(W_{k}, (s-1)(1-t)[W_{k}, X^{\al}])\\
&&+\sum_{i}g_{t}(X_{i}, (s-1)t[X_{i}, X^{\be}])\\
&\overset{\fr{m}_{1}\perp\fr{m}_{2}}{=}&t(s-1)\sum_{i}B(X_{i}, [X_{i}, X^{\be}])=-t(s-1)\sum_{i}B([X_{i}, X_{i}]_{\fr{m}_{2}}, X^{\be})=0.
\end{eqnarray*}
   {\noindent \bf 2nd way.}     Given a   Riemannian spin manifold $(M^{n}, g)$ with a metric connection $\nabla$,  the  Dirac operator  $D\varphi:=\sum_{i}e_{i}\cdot\nabla_{e_{i}}\varphi$  associated to $\nabla$ is formally self-adjoint if and only if $\Div^{\nabla}(X)=\Div^{g}(X)$ for any vector field $X$ \cite[Satz.~2]{FS}. Hence, after   assuming that $(M^{n}=G/K, g_{t})$ carries  an invariant spin structure, one can show that  $\nabla^{s, t}$ has no vectorial component, i.e. $A^{s, t}\in{\cal{A}}_{2}\oplus{\cal{A}}_{3}$ by proving the identification $(D^{s, t})^{*}=D^{s, t}$, where $D^{s, t}$ is the Dirac operator associated to the 2-parameter family $\nabla^{s, t}$.  A $G$-invariant spin structure on $M=G/K$ corresponds to a lift  of the isotropy representation $\chi$  into    the spin group $\Spin(\fr{m})$, i.e. a homomorphism  $\widetilde{\chi} : K\to\Spin(\fr{m})$ such that  $\chi=\lambda\circ\widetilde{\chi}$, where $\lambda : \Spin(\fr{m})\to\SO(\fr{m})$ is the double covering.  Because the tangent bundle splits $T(G/K)=(G\times_{\chi^{1}}\fr{m}_{1})\oplus(G\times_{\chi^{2}}\fr{m}_{2})$, the  Clifford algebra $(\Cl(\fr{m}), B)$ of $\fr{m}=\fr{m}_{1}\oplus\fr{m}_{2}$ with respect to $B$ is the graded tensor product $\Cl(\fr{m})=\Cl(\fr{m}_{1}\oplus\fr{m}_{2})=\Cl(\fr{m}_{1})\hat{\otimes}\Cl(\fr{m}_{2})$,   where  $\chi^{i} : K\to\SO(\fr{m}_{i})$ and $\Cl(\fr{m}_{i})$ denote the Clifford algebras of $\fr{m}_{i}$ with respect to the inner products $B_{\fr{m}_{i}}=B|_{\fr{m}_{i}\times\fr{m}_{i}}$, for $i=1, 2$.  We shall denote by $\kappa_{n} : \Cl^{\bb{C}}(\fr{m})\stackrel{\sim}{\rightarrow}\Ed(\Delta_\fr{m})$  the Clifford representation and by     $\mu(X\otimes \phi):=\kappa_{n}(X)\psi=X\cdot \psi$  the Clifford multiplication between vectors and spinors,  see \cite{Agr03, Srni, Baum} for more details.   Set $\rho:=\kappa\circ \widetilde{\chi} : K\to\Aut(\Delta_{\fr{m}})$, where  $\kappa=\kappa_{n}|_{\Spin(\fr{m})} : \Spin(\fr{m})\to\Aut(\Delta_{\fr{m}})$  is the   spin representation.  The spinor bundle $\Sigma\to G/K$ is the homogeneous  vector bundle  associated to the    $\Spin(\fr{m})$-principal bundle   $P:=G\times_{\widetilde{\chi}}\Spin(\fr{m})$ via the representation $\rho$, i.e. $\Sigma=G\times_{\rho}\Delta_{\fr{m}}$.   Therefore  we may   identify sections of $\Sigma$ with smooth functions $\varphi : G\to\Delta_{\fr{m}}$ such that  
 \[
 \varphi(gk)=\kappa\big(\widetilde{\chi}(k^{-1})\big)\varphi(g)=\rho(k^{-1})\varphi(g), \quad \forall \ g\in G, k\in K.
 \]
It is useful to fix spin endomorphisms  induced by the $B$-orthonormal vectors $\{X_{i}\}_{i=1}^{d_{1}}$, $\{Y_{k}\}_{k=1}^{d_{2}}$ of $\fr{m}_{1}, \fr{m}_{2}$  and interpret the  Clifford relations  by 
   \begin{equation} \label{Cliffm}
 \begin{tabular}{lllll}
  $\kappa_{n}({X}_{i})\kappa_{n}(X_{j})+\kappa_{n}(X_{j})\kappa_{n}(X_{i})$&=&$X_{i}\cdot X_{j}+X_{j}\cdot X_{i}$&=&$-2\delta_{i, j}$,\\
 $\kappa_{n}(Y_{k})\kappa_{n}(Y_{l})+\kappa_{n}(Y_{l})\kappa_{n}(Y_{k})$&=& $Y_{k}\cdot Y_{l}+Y_{l}\cdot Y_{k}$&=&$-2\delta_{k, l}$, \\
  $\kappa_{n}(X_{i})\kappa_{n}(Y_{k})+\kappa_{n}(Y_{k})\kappa_{n}(X_{i})$&=& $ X_{i}\cdot Y_{k}+Y_{k}\cdot X_{i}$&=& 0.
  \end{tabular}
 \end{equation}
   Due to the definition of $\lambda_{*}$ it is also $\lambda_{*}^{-1}(E_{i, j})=  (X_{i}\cdot X_{j})/2$, $\lambda_{*}^{-1}(E_{k, l})=  (Y_{k}\cdot Y_{l})/{2}$ and $\lambda_{*}^{-1}(E_{i, k})=(X_{i}\cdot Y_{k})/{2}$. 
 Consider now the lift  $\widetilde{\Lambda_{s, t}}:=\lambda_{*}^{-1}\circ \Lambda_{s, t}: \fr{m}\to\fr{spin}(\fr{m})$ of the Nomizu map   into $\fr{spin}(\fr{m})$.  
 Then,  by using the expression of  $\Lambda^{s, t}(X)$ described above, we conclude  that
  \begin{equation} \label{liftla1}
\begin{tabular}{lcl}
  $(a)$  If  $X\in\fr{m}_{1},$  & then  & $\widetilde{\Lambda_{s, t}}(X)=\displaystyle\frac{st}{2\sqrt{2t}}\sum_{i, k}B([X, X_{i}]_{\fr{m}_{2}}, Y_{k})X_{i}\cdot Y_{k}$,\\
 $(b)$ If  $X\in\fr{m}_{2},$ & then  &   $\widetilde{\Lambda_{s, t}}(X)=\displaystyle\frac{s(1-t)}{4}\sum_{i, j}B([X, X_{i}], X_{j})X_{i}\cdot X_{j}$.
 \end{tabular} 
  \end{equation}
  The lifted connection on spinor fields reads by (we use the same notation) $\nabla^{s, t}_{X}\varphi=X(\varphi)+\widetilde{\Lambda_{s, t}}(X)\varphi$  and  the action of the Dirac operator on $\Gamma(\Sigma)$ is given by  (see   \cite{Agr03, Baum} for the used notation), 
 \begin{eqnarray*}
 D^{s, t}(\varphi) &=&  \sum_{i=1}^{d_{1}}\kappa_{n}(X_{i})\big\{X_{i}(\varphi) + \widetilde{\Lambda_{s, t}}(X_{i})\varphi\big\}+\sum_{k=1}^{d_{2}}\kappa_{n}(Y_{k}) \big\{\frac{Y_{k}}{\sqrt{2t}}(\varphi) +   \widetilde{\Lambda_{s, t}}(\frac{Y_{k}}{\sqrt{2t}})\varphi\big\}.
 \end{eqnarray*}
 Using the relations (\ref{Cliffm}) and (\ref{liftla1}), it finally takes the form 
 \[
D^{s, t}(\varphi)= D^{0}(\varphi)+\frac{s(1+t)}{4\sqrt{2t}}\sum_{i, j, k}B([X_{i}, X_{j}]_{\fr{m}_{2}}, Y_{k})X_{i}\cdot X_{j}\cdot Y_{k}\cdot\varphi,
 \]
  where $D^{0}(\varphi):=\sum_{1\leq 1\leq d_{1}}X_{i}\cdot X_{i}(\varphi)+\frac{1}{\sqrt{2t}}\sum_{1\leq k \leq d_{2}}Y_{k}\cdot  Y_{k}(\phi)$ is the Dirac operator associated to the canonical connection.  The adjoint operator of the Dirac operator  $D^{s, t}$   is given by (see \cite{Wolf, FS})  
  \begin{eqnarray*}
  (D^{s, t})^{*}(\varphi)&=&\sum_{i}\big\{\nabla_{X_{i}}^{s, t}+\Div^{g_{t}}(X_{i})\big\}(\kappa_{n}(X_{i})\varphi) +\sum_{k}\big\{\nabla^{s, t}_{W_{k}}+\Div^{g_{t}}(W_{k})\big\}(\kappa_{n}(Y_{k})\varphi)\\
  &=&\sum_{i}\big\{\nabla_{X_{i}}^{s, t}+\Div^{g_{t}}(X_{i})\big\}(X_{i}\cdot \varphi) +\sum_{k}\big\{\nabla^{s, t}_{W_{k}}+\Div^{g_{t}}(W_{k})\big\}(Y_{k}\cdot\varphi),
 \end{eqnarray*}
where  in a line with the definition of $D^{s, t}$ we interpret  the Clifford multiplication  only in terms of $B$-orthonormal vectors. Since $\nabla^{s, t}$ is metric, 
one has  (see \cite{Baum, Srni})
  \[
\nabla^{s, t}_{X_{i}}(X_{i}\cdot\varphi)=\nabla^{s, t}_{X_{i}}X_{i}\cdot\varphi+X_{i}\cdot\nabla^{s, t}_{X_{i}}\varphi, \   \nabla^{s, t}_{W_{k}}(Y_{k}\cdot\varphi)=\nabla^{s, t}_{W_{k}}Y_{k}\cdot\varphi+Y_{k}\cdot\nabla^{s, t}_{W_{k}}\varphi, \ \  \forall \varphi\in\Gamma(\Sigma). 
  \]
 But by  the definition  of $\nabla^{s, t}$, it follows that   $\nabla^{s, t}_{X_{i}}X_{i}=0=\nabla^{s, t}_{W_{k}}Y_{k}$.  Hence  
  \[
 \big\{D^{s, t}-(D^{s, t})^{*}\big\}(\varphi)=\sum_{1\leq i\leq d_{i}}\Div^{g_{t}}(X_{i})(X_{i}\cdot\varphi)+\frac{1}{\sqrt{2t}}\sum_{1\leq k\leq d_{2}}\Div^{g_{t}}(Y_{k})(Y_{k}\cdot\varphi).
 \]
 However, it is very easy to prove that $\Div^{g_{t}}(X_{i})=0=\Div^{g_{t}}(Y_{k})$ and thus $(D^{s, t})^{*}=D^{s, t}$ for any $s\in\bb{R}$ and $t>0$. We finally mention that an alternative  method available to prove the identification   $(D^{s, t})^{*}=D^{s, t}$, relies on \cite[Prop.~3.1]{Agr03}.
    \end{proof}

  We  describe now the Ricci tensor  and the scalar curvature of  $(M=G/K, \fr{m}_{1}\oplus\fr{m}_{2}, g_{t})$   with respect to the family $\{\nabla^{s, t} : s\in\bb{R}, t>0\}$. 
        Let  $B_{\fr{k}}:=B|_{\fr{k}\times\fr{k}}$  be the restriction of $B$ on the Lie subalgebra $\fr{k}$. 
       \begin{lemma}\label{GEN}
    a) Let $X\in\fr{m}_{1}$ and $Y\in\fr{m}_{1}$. Then, for $Z\in\fr{m}_{1}$  one has
\begin{eqnarray*}
g_{t}(R^{s, t}(X, Z)Z, Y)&=&\frac{(s^{2}t-2s+2st)}{2}B([[X, Z]_{\fr{m}_{2}}, Z], Y)-B_{\fr{k}}([X, Z], [Z, Y]), 
\end{eqnarray*}
while for $Z\in\fr{m}_{2}$ it holds:  
$g_{t}(R^{s, t}(X, Z)Z, Y)=st(s-st-1)B([[X, Z], Z], Y)$. 

\noindent b) Let $X\in\fr{m}_{1}$ and $Y\in\fr{m}_{2}$, or $X\in\fr{m}_{2}$ and $Y\in\fr{m}_{1}$.  Then,  
$g_{t}(R^{s, t}(X, Z)Z, Y)=0$, for any    $Z\in\fr{m}$.

\noindent c) Let $X\in\fr{m}_{2}$ and $Y\in\fr{m}_{2}$. Then  
\begin{eqnarray*}
g_{t}(R^{s, t}(X, Z)Z, Y)&=&st(s-st-1)B([[X, Z], Z]_{\fr{m}_{2}}, 	Y), \quad \text{if} \ Z\in\fr{m}_{1}, \\
g_{t}(R^{s, t}(X, Z)Z, Y)&=&-2t B_{\fr{k}}([X, Z], [Z, Y]), \quad \text{if} \ Z\in\fr{m}_{2}.
\end{eqnarray*}
  \end{lemma}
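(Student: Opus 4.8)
The plan is to substitute the explicit Nomizu map $\Lambda_{s,t}$ from (\ref{newc}) directly into the curvature formula of (\ref{torsion}), namely
$R^{s,t}(X,Z)Z = [\Lambda_{s,t}(X), \Lambda_{s,t}(Z)]Z - \Lambda_{s,t}([X,Z]_{\fr{m}})Z - \ad([X,Z]_{\fr{k}})Z$,
and to evaluate the resulting expression case by case according to where $X$, $Y$ and $Z$ sit in the splitting $\fr{m} = \fr{m}_1 \oplus \fr{m}_2$. The essential input at every step is the bracket scheme (\ref{incl}), which pins down, for each iterated commutator, the unique summand of $\fr{g} = \fr{k} \oplus \fr{m}_1 \oplus \fr{m}_2$ in which it lands. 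For example, when $X, Z \in \fr{m}_1$ one has $[X,Z] \in \fr{k} \oplus \fr{m}_2$, and the commutator term $[\Lambda_{s,t}(X),\Lambda_{s,t}(Z)]Z$ collapses to a single summand because $\Lambda_{s,t}(Z)Z = (s/2)[Z,Z]_{\fr{m}_2} = 0$ and $\Lambda_{s,t}(\fr{m}_2)\fr{m}_2 = 0$ annihilate the obvious candidate terms. Once each curvature endomorphism has been reduced to one iterated bracket (plus, when $[X,Z]_{\fr{k}} \neq 0$, an $\ad(\fr{k})$-contribution), I would pass to the $(3,0)$-tensor by pairing with $Y$.

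Two bookkeeping devices then handle the pairing uniformly. First, since $\{X_i\}\cup\{Y_k\}$ is a $B$-orthonormal basis adapted to the $B$-orthogonal decomposition $\fr{g} = \fr{k} \oplus \fr{m}_1 \oplus \fr{m}_2$, the metric $g_t$ restricts to $B$ on $\fr{m}_1$ and to $2tB$ on $\fr{m}_2$; this is exactly the source of the $t$-dependent coefficients and of the distinction between the $\fr{m}_1$- and $\fr{m}_2$-valued cases. Second, the $\ad(\fr{k})$-term is rewritten through $\ad$-invariance of the Killing form: $B([[X,Z]_{\fr{k}}, Z], Y) = B([X,Z]_{\fr{k}}, [Z,Y])$, and since $\fr{k}$ is $B$-orthogonal to $\fr{m}$ this equals $B_{\fr{k}}([X,Z], [Z,Y])$, which is precisely the $\fr{k}$-valued term appearing in part (a) with $Z \in \fr{m}_1$ and in part (c) with $Z \in \fr{m}_2$ (where the factor $2t$ from $g_t|_{\fr{m}_2}$ produces the coefficient $-2t$).

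The vanishing statement (b) I would argue by a type (parity) consideration rather than by computation: a short check of the reduced expressions shows that $R^{s,t}(X,Z)Z$ always lies in the \emph{same} summand as $X$, for every $Z \in \fr{m}$. Hence pairing with $Y$ in the complementary summand gives zero by $\fr{m}_1 \perp \fr{m}_2$. In the non-vanishing cases the only genuine care needed is sign bookkeeping when rewriting $[Z,[X,Z]] = -[[X,Z],Z]$; this is what converts the coefficients $s(1-t)$ and $s^2 t/2$ coming respectively from the $\Lambda_{s,t}([X,Z]_{\fr{m}})$-term and the commutator term into the stated values $\tfrac{1}{2}(s^2 t - 2s + 2st)$ and $st(s - st - 1)$.

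I expect the only real obstacle to be the combinatorial overhead: there are six essentially distinct placements of the triple $(X, Y, Z)$, and in each one must verify that every term in $[\Lambda_{s,t}(X),\Lambda_{s,t}(Z)]Z$ that could a priori survive actually collapses for the correct reason — almost always because $\Lambda_{s,t}(\fr{m}_2)\fr{m}_2 = 0$ or because some bracket has no component in the relevant summand. None of the individual computations is deep; the work lies in organizing them so that the powers of $s$ and $t$ are assembled correctly.
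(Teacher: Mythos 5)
Your proposal is correct and is exactly the computation the paper intends (the lemma is stated there without proof): substituting the Nomizu map (\ref{newc}) into Wang's curvature formula (\ref{torsion}), reducing the iterated brackets via (\ref{incl}), pairing with $g_t|_{\fr{m}_1}=B$ and $g_t|_{\fr{m}_2}=2tB$, and using $\ad$-invariance of $B$ together with $B(\fr{k},\fr{m})=0$ reproduces every stated coefficient — for instance $\tfrac{s^2t}{2}-s(1-t)=\tfrac{1}{2}(s^2t-2s+2st)$ in the first case and $2t\cdot\tfrac{s}{2}(s-st-1)=st(s-st-1)$ in part (c). Your parity argument for part (b) is also the right mechanism, with the understanding (implicit in the lemma and in its application in Theorem \ref{rricci}) that $Z$ ranges over one summand at a time rather than over genuinely mixed vectors.
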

It is useful to express the splitting   
$\fr{m}=\fr{m}_{1}\oplus\fr{m}_{2}$  by  $\chi_{*}=\chi_{*}^{1}\oplus\chi_{*}^{2}$, where the sub-representations $\chi_{*}^{i} : \fr{k}\to\fr{so}(\fr{m}_{i})$ are given by $\chi_{*}^{i}(Y):=\ad(Y)|_{\fr{m}_{i}}$, for any $Y\in\fr{k}$. Then, for    the Casimir element $C_{\chi, B_{\fr{k}}} : \fr{m}\to\fr{m}$    we write  $C_{\chi}=C_{\chi_{1}}\oplus C_{\chi_{2}}$, where $C_{\chi_{i}} : \fr{m}_{i}\to\fr{m}_{i}$  $(i=1, 2)$
are given by $C_{\chi_{1}}=-\sum_{a=1}^{\dim_{\bb{R}}\fr{k}}\chi^{1}_{*}(k_{a})\circ\chi_{*}^{1}(k'_{a})$ and $C_{\chi_{2}}=-\sum_{a=1}^{\dim_{\bb{R}}\fr{k}}\chi^{2}_{*}(k_{a})\circ\chi_{*}^{2}(k'_{a})$, respectively.  Here, $\{k_{a}, k'_{a}\}$ are dual bases of $\fr{k}$ with respect to $B_{\fr{k}}$.      
   Because   $B$  is the Killing metric,  it is necessarily $C_{\chi_{i}}=\Cas_{i}\cdot \Id_{\fr{m}_{i}}$ with  $\Cas_{i}=B(\lambda_{i}, \lambda_{i}+2\delta)>0$, where $\lambda_{i}$ is  the dominant  weight  of the $K$-module  $\fr{m}_{i}$ and $\delta$ denotes the half of the sum of positive roots of  $\fr{k}\otimes\bb{C}$ \cite{Wa1}.  In other words      (see \cite{Wa1} or \cite[p. 197]{Bes})
   \begin{eqnarray*}
   \Cas_{1}&=&B\big(C_{\chi_{1}}X_{j}, X_{j}\big)=\sum_{i=1}^{d_{1}}B_{\fr{k}}([X_{j},  X_{i}], [X_{j},  X_{i}])=A_{1}(X_{j}, X_{j}),\\
   \Cas_{2}&=&B\big(C_{\chi_{2}}Y_{l}, Y_{l}\big)=\sum_{k=1}^{d_{2}}B_{\fr{k}}([Y_{l}, Y_{k}], [Y_{l},  Y_{k}])=A_{2}(Y_{l}, Y_{l}),
   \end{eqnarray*}
   where   we  define  symmetric bilinear maps $A_{i}$ on $\fr{m}_{i}$ $(i=1, 2)$ by
   \begin{eqnarray*}
   A_{1}(X, Y)&:=&B(C_{\chi_{1}}X, Y)=\sum_{i=1}^{d_{1}}B_{\fr{k}}([X, X_{i}], [Y, X_{i}]), \quad X, Y\in\fr{m}_{1},\\
  A_{2}(X, Y)&:=&B(C_{\chi_{2}}X, Y)=\sum_{k=1}^{d_{2}}B_{\fr{k}}([X, Y_{k}], [Y, Y_{k}]), \quad X, Y\in\fr{m}_{2}. 
   \end{eqnarray*}

  \begin{theorem}\label{rricci}
The Ricci tensor $\Ric^{s, t}$ of the homogeneous Riemannian manifold   $(M=G/K, \fr{m}_{1}\oplus\fr{m}_{2}, g_{t})$ endowed with the family of $G$-invariant metric connections  $\{\nabla^{s, t} : s\in\bb{R}, t\in\bb{R}_{+}\}$, is expressed as follows:

\noindent (a)  Let $X, Y\in\fr{m}_{1}$. Then 
  \begin{eqnarray*}
  \Ric^{s, t}(X, Y) &=& \sum_{i=1}^{d_{1}}\frac{(s^{2}t-2s+2st)}{2}B([[X, X_{i}]_{\fr{m}_{2}}, X_{i}], Y)\\
  &&+\sum_{k=1}^{d_{2}}\frac{(s^{2}-s^{2}t-s)}{2}B([[X, Y_{k}], Y_{k}], Y)+A_{1}(X, Y).
  \end{eqnarray*}
  
\noindent  (b)  Let $X\in\fr{m}_{1}$, $Y\in\fr{m}_{2}$, or $X\in\fr{m}_{2}$, $Y\in\fr{m}_{1}$. Then $\Ric^{s, t}(X, Y)=0$. 
  
\noindent   (c) Let $X, Y\in\fr{m}_{2}$. Then
  \begin{eqnarray*}
  \Ric^{s, t}(X, Y)&=&\sum_{i=1}^{d_{1}}(s^{2}t-s^{2}t^{2}-st)B([[X, X_{i}], X_{i}]_{\fr{m}_{2}}, Y)+A_{2}(X, Y).
  \end{eqnarray*}
    \end{theorem}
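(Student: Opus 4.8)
The plan is to obtain $\Ric^{s,t}$ as the $g_t$-trace of the curvature operator $R^{s,t}$, all of whose relevant components have already been computed in Lemma~\ref{GEN}. With the convention
\begin{equation*}
\Ric^{s,t}(X,Y)=\sum_{\nu}g_t(R^{s,t}(X,e_\nu)e_\nu,Y),
\end{equation*}
I would evaluate the trace over the adapted $g_t$-orthonormal basis fixed earlier, namely $\{V_i=X_i\}_{i=1}^{d_1}$ spanning $\fr{m}_1$ and $\{W_k=Y_k/\sqrt{2t}\}_{k=1}^{d_2}$ spanning $\fr{m}_2$. This splits every Ricci value into an $\fr{m}_1$-sum plus an $\fr{m}_2$-sum, and Lemma~\ref{GEN} supplies the summand $g_t(R^{s,t}(X,Z)Z,Y)$ for each $Z$ lying in $\fr{m}_1$ or in $\fr{m}_2$ separately, so the task reduces to inserting the basis vectors and collecting terms.

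The one bookkeeping point worth isolating first is the normalization: since $R^{s,t}(X,Z)Z$ (and likewise the $B_{\fr{k}}$-trace term) depends quadratically on $Z$, converting the $g_t$-orthonormal vectors $W_k$ into the $B$-orthonormal vectors $Y_k$ in which the final formula is phrased produces an overall factor $1/(2t)$. Concretely $[[X,W_k],W_k]=(2t)^{-1}[[X,Y_k],Y_k]$ and $B_{\fr{k}}([X,W_k],[W_k,Y])=(2t)^{-1}B_{\fr{k}}([X,Y_k],[Y_k,Y])$. Tracking this factor correctly against the prefactors of Lemma~\ref{GEN} is what yields the stated polynomial-in-$s$ coefficients.

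With this in hand the three cases are routine. For part~(a), $X,Y\in\fr{m}_1$: the $\fr{m}_1$-sum gives $\tfrac12(s^2t-2s+2st)\sum_i B([[X,X_i]_{\fr{m}_2},X_i],Y)$ together with the trace term $-\sum_i B_{\fr{k}}([X,X_i],[X_i,Y])$, while the $\fr{m}_2$-sum gives $\tfrac{1}{2t}\cdot st(s-st-1)\sum_k B([[X,Y_k],Y_k],Y)=\tfrac12(s^2-s^2t-s)\sum_k B([[X,Y_k],Y_k],Y)$. The key identification is that the trace term is precisely $A_1$: using $[X_i,Y]=-[Y,X_i]$ and the definition $A_1(X,Y)=\sum_i B_{\fr{k}}([X,X_i],[Y,X_i])$ one gets $-\sum_i B_{\fr{k}}([X,X_i],[X_i,Y])=A_1(X,Y)$. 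Part~(b) is immediate, since Lemma~\ref{GEN}(b) gives vanishing summands for every $Z$. For part~(c), $X,Y\in\fr{m}_2$: the $\fr{m}_1$-sum yields $st(s-st-1)\sum_i B([[X,X_i],X_i]_{\fr{m}_2},Y)=(s^2t-s^2t^2-st)\sum_i B([[X,X_i],X_i]_{\fr{m}_2},Y)$, and the $\fr{m}_2$-sum, after the $1/(2t)$ factor cancels the $-2t$ of Lemma~\ref{GEN}(c), collapses to $-\sum_k B_{\fr{k}}([X,Y_k],[Y_k,Y])=A_2(X,Y)$ by the analogous rearrangement.

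The argument is essentially trace computation once Lemma~\ref{GEN} is granted, so I do not expect a genuine conceptual obstacle; the \emph{only} place demanding care, and the main source of potential sign/coefficient errors, is the consistent treatment of the normalization $W_k=Y_k/\sqrt{2t}$ and the resulting $1/(2t)$ factors, which must be applied to the quadratic curvature term and to the Casimir (trace) contributions in such a way that the two polynomial coefficients and the clean appearance of $A_1$ and $A_2$ emerge exactly as stated.
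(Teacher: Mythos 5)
Your proposal is correct and coincides with the paper's own (much terser) argument: the paper likewise computes $\Ric^{s,t}(X,Y)=\sum_{i}g_{t}(R^{s,t}(X,X_{i})X_{i},Y)+\tfrac{1}{2t}\sum_{k}g_{t}(R^{s,t}(X,Y_{k})Y_{k},Y)$ using the rescaling $R^{s,t}(X,W_{k})W_{k}=\tfrac{1}{2t}R^{s,t}(X,Y_{k})Y_{k}$ and then invokes Lemma \ref{GEN}. Your bookkeeping of the $1/(2t)$ factor and the identification of the $B_{\fr{k}}$-trace terms with $A_{1}$ and $A_{2}$ is exactly what the paper leaves implicit.
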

     \begin{proof}
 Given some $g_{t}$-orthonormal vectors $V_{i}\in\fr{m}_{1}, W_{k}\in\fr{m}_{2}$, it is $R^{s, t}(X, V_{i})V_{i}=R^{s, t}(X, X_{i})X_{i}$  and  $R^{s, t}(X, W_{k})W_{k}=(1/2t)R^{s, t}(X, Y_{k})Y_{k}$, respectively.  Thus,  for any $X, Y\in\fr{m}$ we compute
\[
 \Ric^{s, t}(X, Y)= \sum_{i=1}^{d_{1}}g_{t}(R^{s, t}(X, X_{i})X_{i}, Y)+(1/2t)\sum_{k=1}^{d_{2}}g_{t}(R^{s, t}(X, Y_{k})Y_{k}, Y) 
\]
 and the result follows by  Lemma \ref{GEN}.  
   \end{proof}

   \begin{corollary}\label{scalarcurv}
 The scalar curvature  ${\rm Scal}^{s, t}$ of $(M=G/K, \fr{m}_{1}\oplus\fr{m}_{2}, g_{t}, \nabla^{s, t})$ is the function on $M$ given by
 \begin{eqnarray*}
 {\rm Scal}^{s, t}&=& -\frac{(s^{2}t-2s+2st)}{2}\sum_{i, j}\|[X_{i}, X_{j}]_{\fr{m}_{2}}\|^{2}-\sum_{i, k}(s^{2}-s^{2}t-s)\|[X_{i}, Y_{k}]\|^{2}\\
 &&+\sum_{i}A_{1}(X_{i}, X_{i}) +\frac{1}{2t}\sum_{k}A_{2}(Y_{k}, Y_{k}).
 \end{eqnarray*}
 \end{corollary}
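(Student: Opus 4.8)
The plan is to obtain $\mathrm{Scal}^{s,t}$ simply as the $g_t$-trace of the Ricci tensor computed in Theorem \ref{rricci}. Since the vectors $V_i=X_i$ of $\fr{m}_1$ and $W_k=Y_k/\sqrt{2t}$ of $\fr{m}_2$ form a $g_t$-orthonormal basis of $\fr{m}$, I would start from
\[
\mathrm{Scal}^{s,t}=\sum_{i=1}^{d_1}\Ric^{s,t}(X_i,X_i)+\frac{1}{2t}\sum_{k=1}^{d_2}\Ric^{s,t}(Y_k,Y_k),
\]
where the factor $1/2t$ arises from bilinearity of $\Ric^{s,t}$ together with $W_k=Y_k/\sqrt{2t}$, and where only the diagonal blocks contribute because part (b) of Theorem \ref{rricci} forces the mixed block $\fr{m}_1\times\fr{m}_2$ to vanish.

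The key algebraic step is to rewrite every curvature term of the form $B([[X,Z],Z]_{\fr{m}_j},X)$ as a squared norm. Using only the $\ad(\fr{g})$-invariance of the Killing form, antisymmetry of the bracket, and the $B$-orthogonality of the splitting $\fr{g}=\fr{k}\oplus\fr{m}_1\oplus\fr{m}_2$ (so that pairing an element of $\fr{m}_2$ against a bracket automatically extracts its $\fr{m}_2$-component, by \eqref{incl}), I would establish the identities
\[
B([[X_j,X_i]_{\fr{m}_2},X_i],X_j)=-\|[X_i,X_j]_{\fr{m}_2}\|^2,\qquad B([[X_j,Y_k],Y_k],X_j)=-\|[X_j,Y_k]\|^2,
\]
and, analogously, $B([[Y_k,X_i],X_i]_{\fr{m}_2},Y_k)=-\|[X_i,Y_k]\|^2$, all norms being taken with respect to $B$ (recall the $X_i,Y_k$ are $B$-orthonormal). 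Substituting these into part (a) turns the first summand of $\sum_i\Ric^{s,t}(X_i,X_i)$ into $-\tfrac{1}{2}(s^2t-2s+2st)\sum_{i,j}\|[X_i,X_j]_{\fr{m}_2}\|^2$, while the Casimir contributions $\sum_i A_1(X_i,X_i)$ and $\tfrac{1}{2t}\sum_k A_2(Y_k,Y_k)$ pass through unchanged, matching three of the four terms in the statement.

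The one point that requires care is the mixed term $\|[X_i,Y_k]\|^2$, which is produced \emph{twice}. It appears from the second summand of part (a) with coefficient $-\tfrac{1}{2}(s^2-s^2t-s)$, and again from part (c), where the coefficient $s^2t-s^2t^2-st=t(s^2-s^2t-s)$ is multiplied by the overall factor $1/2t$ and so also yields $-\tfrac{1}{2}(s^2-s^2t-s)$. Adding the two contributions gives the full coefficient $-(s^2-s^2t-s)$ in front of $\sum_{i,k}\|[X_i,Y_k]\|^2$, exactly as claimed.

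The main obstacle is therefore purely bookkeeping rather than conceptual: one must correctly track the signs coming from antisymmetry and from the $B$-orthogonal projections onto $\fr{k},\fr{m}_1,\fr{m}_2$, and carry the rescaling $1/2t$ through the $\fr{m}_2$-block so that it precisely cancels the factor $t$ hidden in its Ricci coefficient. Once these conventions are fixed, the result follows by direct substitution.
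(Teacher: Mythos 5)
Your proposal is correct and is exactly the argument the paper intends: the corollary is obtained by tracing the Ricci tensor of Theorem \ref{rricci} against the $g_{t}$-orthonormal basis $\{X_{i}\}\cup\{Y_{k}/\sqrt{2t}\}$, converting the double brackets into norms via the $\ad$-invariance of $B$ and the $B$-orthogonality of $\fr{k}\oplus\fr{m}_{1}\oplus\fr{m}_{2}$. Your bookkeeping of the factor $1/2t$ and of the doubled mixed term $\sum_{i,k}\|[X_{i},Y_{k}]\|^{2}$ (coefficient $-\tfrac{1}{2}(s^{2}-s^{2}t-s)$ from each block) is accurate and reproduces the stated formula.
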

   \subsection{The homogeneous Einstein equation}
Let us shortly  illustrate the traditional homogeneous Einstein equation $\Ric^{1, t}=cg_{t}$ (where $c\in\bb{R}_{+}$ is the Einstein constant).   We need the components $r_{1}=\Ric^{1, t}(V_{j}, V_{j})$ and  $r_{2}=\Ric^{1, t}(W_{l}, W_{l})$ of the Riemannian Ricci tensor, for some $g_{t}$-orthonormal vectors $V_{j}\in\fr{m}_{1}$ and $W_{l}\in\fr{m}_{2}$, respectively.   Because $\fr{m}_{1}\ncong\fr{m}_{2}$ as $K$-representations, it is  $\Ric^{1, t}(\fr{m}_{1}, \fr{m}_{2})=0$  and all  homogeneous Einstein metrics, if existent, appear as real positive solutions of the equation  $r_{1}-r_{2}=0$.  As a first step,    by  Theorem  \ref{rricci} we see that     \begin{corollary}\label{r1r22}
  Let   $\{X_{i}\}_{i=1}^{d_{1}}$ and $\{Y_{k}\}_{k=1}^{d_{2}}$ be   the $B$-orthonormal bases of $\fr{m}_{1}$ and $\fr{m}_{2}$, respectively. Then, it holds that   $\Ric^{s, t}(X_{i}, Y_{k})=0=\Ric(Y_{k}, X_{i})$, and   
   \begin{eqnarray*}
     \Ric^{s, t}(X_{j},  X_{j})&=&-\displaystyle\frac{(s^{2}t-2s+2st)}{2}\sum_{i=1}^{d_{1}}\| [X_{j},  X_{i}]_{\fr{m}_{2}}\|^{2}-\frac{(s^{2}-s^{2}t-s)}{2}\sum_{k=1}^{d_{2}}\|[X_{j}, Y_{k}]\|^{2}\\
     &&+\Cas_{1},\\
     \Ric^{s, t}(Y_{l}, Y_{l})&=&-st(s-st-1)\sum_{i=1}^{d_{1}}\|[Y_{l}, X_{i}]\|^{2}+\Cas_{2}.
   \end{eqnarray*}
   \end{corollary}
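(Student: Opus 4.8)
The statement is a direct consequence of Theorem~\ref{rricci}: I simply evaluate the three formulas there on the $B$-orthonormal basis vectors $X_j\in\fr{m}_1$ and $Y_l\in\fr{m}_2$ and rewrite the iterated-bracket terms as squared $B$-norms. The vanishing $\Ric^{s,t}(X_i,Y_k)=0=\Ric^{s,t}(Y_k,X_i)$ is nothing but part~(b) of Theorem~\ref{rricci}, which asserts that $\Ric^{s,t}$ kills every mixed pair (one argument in $\fr{m}_1$, the other in $\fr{m}_2$); this is forced by $\fr{m}_1\ncong\fr{m}_2$ as $K$-modules. Hence only the two diagonal components need to be computed, and the single tool used for both is the invariance identity $B([U,V],W)=B(U,[V,W])$ of the Killing form, combined with the $B$-orthogonality of the splitting $\fr{g}=\fr{k}\oplus\fr{m}_1\oplus\fr{m}_2$ (so that a bracket paired against a vector lying in one summand may be freely replaced by its component in that summand) and the inclusions (\ref{incl}).

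For $X=Y=X_j$, part~(a) of Theorem~\ref{rricci} reads
\[
\Ric^{s,t}(X_j,X_j)=\frac{s^2t-2s+2st}{2}\sum_{i}B\big([[X_j,X_i]_{\fr{m}_2},X_i],X_j\big)+\frac{s^2-s^2t-s}{2}\sum_{k}B\big([[X_j,Y_k],Y_k],X_j\big)+A_1(X_j,X_j).
\]
Setting $W_i:=[X_j,X_i]_{\fr{m}_2}\in\fr{m}_2$ and $U_k:=[X_j,Y_k]\in\fr{m}_1$ (using $[\fr{m}_1,\fr{m}_1]\subset\fr{k}\oplus\fr{m}_2$ and $[\fr{m}_1,\fr{m}_2]\subset\fr{m}_1$), the invariance identity gives
\[
B([W_i,X_i],X_j)=B(W_i,[X_i,X_j]_{\fr{m}_2})=-\|[X_j,X_i]_{\fr{m}_2}\|^2,\qquad B([U_k,Y_k],X_j)=B(U_k,[Y_k,X_j])=-\|[X_j,Y_k]\|^2,
\]
since $[X_i,X_j]_{\fr{m}_2}=-W_i$ and $[Y_k,X_j]=-U_k$. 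Substituting these and using $A_1(X_j,X_j)=B(C_{\chi_1}X_j,X_j)=\Cas_1$ (as $C_{\chi_1}=\Cas_1\cdot\Id_{\fr{m}_1}$ and $X_j$ is a $B$-unit vector) yields the asserted formula for $\Ric^{s,t}(X_j,X_j)$.

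The component $\Ric^{s,t}(Y_l,Y_l)$ is handled in the same way from part~(c). With $P_i:=[Y_l,X_i]\in\fr{m}_1$ the invariance identity gives $B([[Y_l,X_i],X_i]_{\fr{m}_2},Y_l)=B(P_i,[X_i,Y_l])=-\|[Y_l,X_i]\|^2$, so that, after factoring $s^2t-s^2t^2-st=st(s-st-1)$ and inserting $A_2(Y_l,Y_l)=\Cas_2$, one obtains $\Ric^{s,t}(Y_l,Y_l)=-st(s-st-1)\sum_i\|[Y_l,X_i]\|^2+\Cas_2$. There is no genuine obstacle here beyond careful bookkeeping: the only place errors can creep in is tracking which $B$-orthogonal summand each doubly iterated bracket lands in and the accompanying signs, but once the inclusions (\ref{incl}) are applied each term collapses to a single squared norm with no residual cross-terms.
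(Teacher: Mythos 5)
Your proposal is correct and follows exactly the route the paper intends: the corollary is stated as an immediate consequence of Theorem~\ref{rricci}, obtained by evaluating its three formulas on the $B$-orthonormal basis and converting each iterated bracket into a squared norm via the $\ad$-invariance of $B$ together with the $B$-orthogonality of $\fr{k}\oplus\fr{m}_1\oplus\fr{m}_2$ and the inclusions (\ref{incl}). All signs and the factorization $s^2t-s^2t^2-st=st(s-st-1)$ check out.
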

    The Riemannian Ricci tensor occurs for $s=1$, i.e. $r_{1}=\Ric^{1, t}(V_{j}, V_{j})=\Ric^{1, t}(X_{j}, X_{j})$ and $r_{2}=\Ric^{1, t}(W_{l}, W_{l})=(1/2t)\Ric^{1, t}(Y_{l}, Y_{l})$, respectively. 
Since $t\neq 0$, we conclude that  the homogeneous Einstein equation is   the quadratic equation  $\al\cdot t^{2}  + \be\cdot  t  +\gamma=0$, where $\al, \be, \gamma$ are given by
\begin{eqnarray*}
\al&:=&-3\sum_{i}\|[X_{j}, X_{i}]_{\fr{m}_{2}}\|^{2}+\sum_{k}\|[X_{j}, Y_{k}]\|^{2}-\sum_{i}\|[Y_{l}, X_{i}]\|^{2},\\
\be&:=&2\Big(\sum_{i}\|[X_{j}, X_{i}]_{\fr{m}_{2}}\|^{2}+ \Cas_{1}\Big),\quad \gamma:=-\Cas_{2}.
\end{eqnarray*}
Because the sign of $\al$ depends on the underlying manifold, all that we can state is a bound of the number of invariant Einstein metrics $\fr{ein}(M)$, namely $0\leq \fr{ein}(M) \leq 2$, see also \cite{Kerr} and  \cite[p. 263-264]{Bes}.  
 
\begin{example}\label{cp3}\textnormal{Consider the  complex projective space $\bb{C}P^{3}=\SO(5)/\U(2)$.    For the Lie algebra $\fr{so}(5)$   we fix  a reductive decomposition related to the twistor fibration of $\bb{C}P^{3}$ over the 4-sphere $\Ss^{4}$ (for details we refer to \cite{Baum}).  Recall that the matrices  $\{E_{i, j} :  i<j\}$  form  an orthonormal basis of $\fr{so}(n)$ with respect to the scalar product $B'=-(1/2)\tr AB$  (which is such that $B_{\SO(n)}=2(n-2)B'$).  By definition,  it is  $\fr{so}(5)=\Span_{\bb{R}}\{E_{1, 2}, \ldots,  E_{4, 5}\}$. Set
  {\small\[
  \fr{k}:=\Span_{\bb{R}}\{k_{1}:=E_{1, 2}, \ k_{2}:=E_{3, 4}, \  k_{3}:=(E_{1, 3}-E_{2, 4})/\sqrt{2},  \ k_{4}:=(E_{1, 4}+E_{2, 3})/\sqrt{2}\} \cong\fr{u}(2).
\]}
  Notice  that  $B'(k_{i}, k_{j})=\delta_{i, j}$, for any $1\leq i, j\leq 4$.  
         Let $\fr{m}$ be  the invariant $B'$-orthogonal complement of $\fr{k}$  into $\fr{so}(5)$;  an orthonormal   basis  with respect to $B'$  is given by the vectors $e_{1}:=E_{1, 5}$, $e_{2}:=E_{2, 5}$,  $e_{3}:=E_{3, 5}$,  $e_{4}:=E_{4, 5}$,  $e_{5}:=(E_{1, 3}+E_{2, 4})/\sqrt{2}$, and  $e_{6}:=(E_{1, 4}-E_{2, 3})/\sqrt{2}$. 
We also set 
\[
\fr{m}_{1}:=\Span_{\bb{R}}\{e_{1}, e_{2}, e_{3}, e_{4}\}, \quad \fr{m}_{2}:=\Span_{\bb{R}}\{e_{5}, e_{6}\},
\]
 such that $\fr{m}=\fr{m}_{1}\oplus\fr{m}_{2}$.  Then,  the inclusions given by (\ref{incl}) can be easily checked by computing  the      Lie brackets  $\ad_{i,j}:=\ad(e_{i})e_{j}=[e_{i}, e_{j}]$ $(1\leq i, j\leq 6)$ of   the base vectors: 
 \[
\begin{tabular}{r|cccccc}
& $\ad_{1, j}$ & $\ad_{2, j}$ & $\ad_{3, j}$ & $\ad_{4, j}$ & $\ad_{5, j}$ & $\ad_{6, j}$ \\
\hline
$e_{1}$ &  0 & $-k_{1}$ & $-E_{1, 3}$  & $-E_{1, 4}$ & $(\sqrt{2}/{2})e_{3}$ & $(\sqrt{2}/{2})e_{4}$ \\
\hline
$e_{2}$ &  $k_{1}$ & 0 &  $-E_{2, 3}$ &  $-E_{2, 4}$  & $(\sqrt{2}/{2})e_{4}$  &  $-(\sqrt{2}/{2})e_{3}$    \\
\hline
$e_{3}$ & $E_{1, 3}$ &  $E_{2, 3}$ & 0 & $-k_{2}$ & $-(\sqrt{2}/{2})e_{1}$ &  $(\sqrt{2}/{2})e_{2}$   \\
\hline
$e_{4}$ & $E_{1, 4}$ &  $E_{2, 4}$   & $k_{2}$ &  0 & $-(\sqrt{2}/{2})e_{2}$ &  $-(\sqrt{2}/{2})e_{1}$  \\
\hline
$e_{5}$ & $-(\sqrt{2}/{2})e_{3}$ &  $-(\sqrt{2}/{2})e_{4}$ &   $(\sqrt{2}/{2})e_{1}$ &  $(\sqrt{2}/{2})e_{2}$ & 0 & $ k_{1}-k_{2}$ \\
\hline
$e_{6}$ & $-(\sqrt{2}/{2})e_{4}$ &  $(\sqrt{2}/{2})e_{3}$ & $-(\sqrt{2}/{2})e_{2}$ & $(\sqrt{2}/{2})e_{1}$ & $-k_{1}+k_{2}$ & 0  \\
\hline
\end{tabular}
\]
Observe that  $E_{1, 3}, E_{1,4}, E_{2, 3}, E_{2, 4}\in \fr{k}\oplus\fr{m}_{2}$.  The restrictions   separately on $\fr{k}$ and $\fr{m}_{2}$,  are given by 
  \begin{equation*}\label{sorry}
  \begin{tabular}{l  l  l  l }
 $E_{1, 3}|_{\fr{m}_{2}}= (\sqrt{2}/{2})e_{5},$  & $E_{1, 4}|_{\fr{m}_{2}}=(\sqrt{2}/{2})e_{6},$  & $E_{2, 3}|_{\fr{m}_{2}}= -(\sqrt{2}/{2})e_{6},$ & $E_{2, 4}|_{\fr{m}_{2}}=(\sqrt{2}/{2})e_{5}$, \\
 $E_{1, 3}|_{\fr{k}}=(\sqrt{2}/{2})k_{3},$      &  $E_{1, 4}|_{\fr{k}}= (\sqrt{2}/{2})k_{4},$ &   $E_{2, 3}|_{\fr{k}}= (\sqrt{2}/{2})k_{4},$   &  $E_{2, 4}|_{\fr{k}}=-(\sqrt{2}/{2})k_{3}$.  
  \end{tabular} 
  \end{equation*} 
   Now, up to scaling, any invariant Riemannian metric on $\bb{C}P^{3}$  has the form  $g_{t}=B'|_{\fr{m}_{1}\times\fr{m}_{1}}+2tB'|_{\fr{m}_{2}\times\fr{m}_{2}}$, for some $t\in\bb{R}_{+}$.  For $X_{j}=e_{1}$ and $Y_{l}=e_{5}$, the coefficients $\al, \be, \gamma$ are given respectively by  $\al=-3\sum_{i=1}^{4}\big\| [e_{1}, e_{i}]_{\fr{m}_{2}}\big\|^{2}+\sum_{k=5}^{6}\big\|[e_{1}, e_{k}]\big\|^{2}-\sum_{i=1}^{4}\big\|[e_{5},  e_{i}]\big\|^{2}$, $\be= 2\Big(\sum_{i=1}^{4}\big\|[e_{1}, e_{i}]_{\fr{m}_{2}}\big\|^{2}+ B'\big(C_{\chi_{1}}e_{1}, e_{1}\big)\Big)$ and  $\gamma=-B'\big(C_{\chi_{2}}e_{5}, e_{5}\big)$.   We  compute  $\gamma=-B'\big(C_{\chi_{2}}e_{5}, e_{5}\big)=-2(=-B'\big(C_{\chi_{1}}e_{1}, e_{1}\big))$, $\al=-4$  and $\be=6$. Therefore,  on $\bb{C}P^{3}$ the Einstein equation $\al\cdot t^{2}+\be\cdot t+\gamma=0$ has two positive solutions, namely $t=1$ and $t=1/2$.  The first value defines the K\"ahler-Einstein metric  $g_{1}=B'|_{\fr{m}_{1}\times\fr{m}_{1}}+2B'|_{\fr{m}_{2}\times\fr{m}_{2}}$ and the second one corresponds to the Killing metric $g_{1/2}=B'|_{\fr{m}_{1}\times\fr{m}_{1}}+B'|_{\fr{m}_{2}\times\fr{m}_{2}}$, which is a homogeneous Einstein metric for $\bb{C}P^{3}$ (in fact nearly-K\"ahler), see  also \cite{Baum, Chry1}.}
 \end{example}

 \subsection{$\nabla^{s, \frac{1}{2}}$-Einstein structures}
Corollary  \ref{NEWTORSION} ensures that  for $s\neq 1$ and $t=1/2$  the family $\{\nabla^{s, \frac{1}{2}} : s\in\bb{R}\}$ has non-trivial skew-symmetric torsion  $T^{s, \frac{1}{2}}\in\Lambda^{3}(\fr{m})$. Thus,   if $\nabla^{g_{1/2}}\equiv\nabla^{1, \frac{1}{2}}$ denotes the Levi-Civita connection on the standard homogeneous Riemannian  manifold  $(M=G/K, \fr{m}_{1}\oplus\fr{m}_{2}, g_{1/2})$, one can write $B(\nabla^{s}_{X}Y, Z)=B(\nabla^{g_{1/2}}_{X}Y, Z) +\frac{1}{2}T^{s}(X, Y, Z)$.    Hence, for the Killing metric $g_{1/2}\equiv B|_{\fr{m}\times\fr{m}}$ it makes sense to examine the existence of a  $\nabla^{s, \frac{1}{2}}$-Einstein structure with skew-torsion.  Because  the value $t=1/2$ will be fixed from now on,  for simplicity we will write $\nabla^{s, \frac{1}{2}}\equiv \nabla^{s}$, $T^{s, \frac{1}{2}}\equiv T^{s}$,  e.t.c.
  Let us present   some structural properties of the torsion form $T^{s}\in\Lambda^{3}(\fr{m})$.  
It is useful to introduce the following  maps:
   \[
 \begin{tabular}{l}
  $ \Jac_{\fr{m}}(X_{j}, X_{r}, X_{s}):= \ \fr{S}_{j, r, s}[ X_{j},  [X_{r}, X_{s}]_{\fr{m}_{2}}]$,\\
  $\Jac_{\fr{m}}(X_{i}, X_{j},  Y_{k})   \ := \ [X_{i},  [X_{j}, Y_{k}]]_{\fr{m}_{2}}+[X_{j},   [Y_{k}, X_{i}]]_{\fr{m}_{2}}+[Y_{k},  [X_{i}, X_{j}]_{\fr{k}}]$,\\
  $\Jac_{\fr{m}}(X_{i}, Y_{k}, Y_{l}) \ \  := \ \fr{S}_{i, k, l}[X_{i}, [Y_{k}, Y_{l}]].$
    \end{tabular}
 \]
 Here, the vectors $\{X_{i}\}_{i=1}^{d_{1}}$ and $\{Y_{k}\}_{k=1}^{d_{2}}$ stand for the fixed $B$-orthonormal bases.   Although these  maps are different each other, we use   the same notation since  in any case their definition is obvious by  (\ref{incl}).  
  In fact, it is easy to see that the mixed  Jacobians $\Jac_{\fr{m}}(X_{i}, X_{j}, Y_{k})$ and $\Jac_{\fr{m}}(X_{i}, Y_{k}, Y_{l})$  are identically    equal to zero.     Indeed, by  viewing  the vectors $X_{i}, Y_{k}, Y_{l}\in\fr{m}$ as left-invariant vectors fields, it follows that $\Jac_{\fr{m}}(Y_{k}, Y_{l}, X_{i})=0$.  Then,  because $B\big(\Jac_{\fr{m}}(X_{i}, X_{j}, Y_{k}), Y_{l}\big)=B\big(\Jac_{\fr{m}}(X_{i}, Y_{k}, Y_{l}), X_{j}\big)$  it also follows that   $\Jac_{\fr{m}}(X_{i}, X_{j}, Y_{k})=0$. This observation simplifies the calculations.
In the following, for some   vectors $A\in\fr{m}_{\al}, B\in\fr{m}_{\be}, C\in\fr{m}_{\gamma}$ with $1\leq \al, \be, \gamma \leq 2$, we shall use the convention $\nabla_{\al\be\gamma}^{s}:=(\nabla_{C}^{s}T^{s})(A, B)$.  
\begin{lemma}\label{COVAR}
 The 
 covariant derivative  $\nabla_{Z}^{s}T^{s} : \fr{m}\times\fr{m}\to\fr{m}$ of the torsion form $T^{s, \frac{1}{2}}\equiv T^{s}$ is given by  \[
 \begin{tabular}{lllll}
$\nabla_{111}^{s}$&:=&$(\nabla_{X_{r}}^{s}T^{s})(X_{i}, X_{j})$ &=&$\frac{s(s-1)}{2}\Jac_{\fr{m}}(X_{i}, X_{j}, X_{r})$,\\
$\nabla_{112}^{s}$&:=&$(\nabla_{Y_{k}}^{s}T^{s})(X_{i}, X_{j})$&=& $-\frac{s(s-1)}{2}[Y_{k}, [X_{i}, X_{j}]_{\fr{k}}]$,\\
$\nabla_{221}^{s}$&:=&$(\nabla_{X_{i}}^{s}T^{s})(Y_{k}, Y_{l})$&=& $-\frac{s(s-1)}{2}[X_{i}, [Y_{k}, Y_{l}]]$,
\end{tabular}
\]
with $\nabla^{s}_{112}=-\nabla^{s}_{121}=\nabla^{s}_{211}$, $\nabla^{s}_{221}=-\nabla^{s}_{212}=\nabla^{s}_{122}$ and  all the other combinations are zero.    On the other hand, the co-differential $\delta^{s}T^{s}$  vanishes  for any $s\in\bb{R}$.
 \end{lemma}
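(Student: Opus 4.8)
The plan is to reduce the whole computation to one algebraic object, the reductive Jacobian, and then exploit its total antisymmetry. Since $T^{s}$ is a $G$-invariant tensor it is parallel for the canonical connection, $\nabla^{c}T^{s}=0$; so writing $\nabla^{s}=\nabla^{c}+\Lambda_{s,\frac12}$ as in (\ref{newcon}) and using that for $t=1/2$ one has $\Lambda_{\frac12}(X)Y=\tfrac12[X,Y]_{\fr{m}}$, I would start from the origin identity
\[
(\nabla^{s}_{Z}T^{s})(X,Y)=\Lambda^{s}(Z)\,T^{s}(X,Y)-T^{s}(\Lambda^{s}(Z)X,Y)-T^{s}(X,\Lambda^{s}(Z)Y),
\]
where $\Lambda^{s}:=\Lambda_{s,\frac12}$ satisfies $\Lambda^{s}(Z)=\tfrac{s}{2}[Z,\,\cdot\,]_{\fr{m}}$. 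Substituting $T^{s}(X,Y)=(s-1)[X,Y]_{\fr{m}}$ from Lemma \ref{T} collapses the three terms into
\[
(\nabla^{s}_{Z}T^{s})(X,Y)=\tfrac{s(s-1)}{2}\,Q(Z,X,Y),\qquad Q(Z,X,Y):=\fr{S}^{Z,X,Y}[Z,[X,Y]_{\fr{m}}]_{\fr{m}},
\]
with $\fr{S}$ the cyclic sum. This already produces the universal prefactor $s(s-1)/2$ of the statement (it equals $\al(\al-1)/2$ under $\al=1-s$, matching the naturally reductive formula quoted before Lemma \ref{COVAR}).

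The single structural fact I would then record is that $Q$ is \emph{totally antisymmetric} in $(Z,X,Y)$, which is immediate from the cyclic-sum form together with skew-symmetry of the bracket. Two consequences drop out at once. First, the relations $\nabla^{s}_{112}=-\nabla^{s}_{121}=\nabla^{s}_{211}$ and $\nabla^{s}_{221}=-\nabla^{s}_{212}=\nabla^{s}_{122}$ are nothing but the transposition and cyclic symmetries of $Q$ applied to a fixed triple (two vectors from one summand, one from the other), and every index pattern not covered by the three displayed formulas vanishes because (\ref{incl}) forces the relevant cyclic sum to be zero. Second, and most efficiently, the co-differential vanishes: in a $B$-orthonormal basis $\{e_{j}\}$ of $\fr{m}$ (recall $g_{1/2}=B|_{\fr{m}}$),
\[
(\delta^{s}T^{s})(X,Y)=-\sum_{j}(\nabla^{s}_{e_{j}}T^{s})(e_{j},X,Y)=-\tfrac{s(s-1)}{2}\sum_{j}B\big(Q(e_{j},e_{j},X),Y\big)=0,
\]
since $Q(e_{j},e_{j},X)=0$ by the repeated-argument antisymmetry. (Alternatively this is the known identity $\delta^{\al}T^{\al}=0$ for the naturally reductive family, since for $t=1/2$ one has $\nabla^{s}=\nabla^{\,1-s}$.)

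It remains to identify $Q$ in the three surviving index patterns, which is where (\ref{incl}) enters. For $Z,X,Y\in\fr{m}_{1}$ one has $[X,Y]_{\fr{m}}=[X,Y]_{\fr{m}_{2}}$ and each outer bracket already lands in $\fr{m}_{1}$, so $Q$ is literally the cyclic sum $\Jac_{\fr{m}}(X_{i},X_{j},X_{r})$ of the statement, giving $\nabla^{s}_{111}$. For the mixed patterns I would rewrite $Q$ by feeding in the genuine Jacobi identity of $\fr{g}$: since $[\fr{k},\fr{m}]\subset\fr{m}$, each $\fr{m}$-projection differs from the true bracket by a $\fr{k}$-correction lying again in $\fr{m}$, and the honest Jacobi identity annihilates the full-bracket part, leaving
\[
Q(Z,X,Y)=-[Z,[X,Y]_{\fr{k}}]+[[Z,X]_{\fr{k}},Y]+[X,[Z,Y]_{\fr{k}}].
\]
For $Z\in\fr{m}_{2},\,X,Y\in\fr{m}_{1}$ the inclusion $[\fr{m}_{2},\fr{m}_{1}]\subset\fr{m}_{1}$ kills the last two $\fr{k}$-components, leaving $Q=-[Z,[X,Y]_{\fr{k}}]$, i.e.\ $\nabla^{s}_{112}$; for $Z\in\fr{m}_{1},\,X,Y\in\fr{m}_{2}$ the inclusions $[\fr{m}_{1},\fr{m}_{2}]\subset\fr{m}_{1}$ and $[\fr{m}_{2},\fr{m}_{2}]\subset\fr{k}$ leave only $Q=-[Z,[X,Y]]$, i.e.\ $\nabla^{s}_{221}$. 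I expect the only genuine work to be this case bookkeeping -- tracking under (\ref{incl}) which iterated brackets land in $\fr{k}$, $\fr{m}_{1}$ or $\fr{m}_{2}$ and getting the signs right -- while the conceptual content, namely the reduction to the totally antisymmetric $Q$, renders both the symmetry relations and $\delta^{s}T^{s}=0$ essentially automatic.
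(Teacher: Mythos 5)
Your proof is correct: the reduction of $(\nabla^{s}_{Z}T^{s})(X,Y)$ to $\tfrac{s(s-1)}{2}\,\fr{S}^{Z,X,Y}[Z,[X,Y]_{\fr{m}}]_{\fr{m}}$ is exactly the specialization (via $\al=1-s$) of the formula $(\nabla^{\al}_{Z}T^{\al})(X,Y)=\tfrac{\al(\al-1)}{2}\Jac_{\fr{m}}(X,Y,Z)$ that the paper quotes from \cite{Agr03}, the total antisymmetry of this cyclic sum gives the sign relations, the vanishing of $\delta^{s}T^{s}$, and the vanishing of the $222$-pattern, and your $\fr{k}$-correction identity $Q=-\fr{S}[Z,[X,Y]_{\fr{k}}]$ reproduces the mixed cases in a way equivalent to the paper's observation, made just before the lemma, that the mixed Jacobians $\Jac_{\fr{m}}(X_{i},X_{j},Y_{k})$ and $\Jac_{\fr{m}}(X_{i},Y_{k},Y_{l})$ vanish identically. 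The paper states the lemma without proof, and your argument is precisely the computation it leaves implicit.
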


\noindent The vanishing of the co-differential $\delta^{s}T^{s}$ ensures that the Ricci tensor $\Ric^{s}$ is symmetric.  In full details
\begin{proposition}
Let   $\{X_{i}\}_{i=1}^{d_{1}}$ and $\{Y_{k}\}_{k=1}^{d_{2}}$ be   the $B$-orthonormal bases of $\fr{m}_{1}$ and $\fr{m}_{2}$, respectively. Then, the Ricci tensor  associated to the  1-parameter family   $\{\nabla^{s}\equiv\nabla^{s, \frac{1}{2}} : s\in\bb{R}\}$ satisfies the following relations: $\Ric^{s}(X_{i}, Y_{l})=0$ and
\[
 \Ric^{s}(X_{j},  X_{j})=\Ric^{g_{1/2}}(X_{j},  X_{j})-\frac{1}{4}S^{s}(X_{j}, X_{j}),\ \  \Ric^{s}(Y_{l}, Y_{l})=\Ric^{g_{1/2}}(Y_{l}, Y_{l})-\frac{1}{4}S^{s}(Y_{l}, Y_{l}),
\]
  where the non-zero parts of the  symmetric tensor $S^{s}$ are of the form
\[
S^{s}(X_{j}, X_{j})=(s-1)^{2}\big\{\sum_{i}\|[X_{j}, X_{i}]_{\fr{m}_{2}}\|^{2}+\sum_{k}\|[X_{j}, Y_{k}]\|^{2}\big\}, \  \ S^{s}(Y_{l}, Y_{l})=(s-1)^{2}\sum_{i}\|[Y_{l}, X_{i}]\|^{2}.
\]
\end{proposition}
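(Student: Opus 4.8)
The plan is to derive both statements from the general Ricci identity for metric connections with skew-torsion recalled in Section~\ref{CST}, combined with the explicit torsion of Lemma~\ref{T}. The first observation I would make is that at $t=1/2$ the metric $g_{1/2}$ coincides with the Killing metric $B|_{\fr{m}\times\fr{m}}$, so the fixed $B$-orthonormal basis $\{X_{i}\}_{i=1}^{d_{1}}\sqcup\{Y_{k}\}_{k=1}^{d_{2}}$ is at the same time $g_{1/2}$-orthonormal; this is the frame I would use for all contractions.

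By Corollary~\ref{NEWTORSION} the torsion $T^{s}\equiv T^{s,\frac{1}{2}}$ is a nonzero $3$-form for $s\neq 1$, so $\nabla^{s}$ is a metric connection with totally skew-symmetric torsion and the splitting $\Ric^{s}=\Ric^{s}_{S}+\Ric^{s}_{A}$ of Section~\ref{CST} applies, with $\Ric^{s}_{A}(X,Y)=-\tfrac{1}{2}(\delta^{g_{1/2}}T^{s})(X,Y)$. Since $\delta^{\nabla}T=\delta^{g}T$ and Lemma~\ref{COVAR} gives $\delta^{s}T^{s}=0$, the antisymmetric part vanishes identically, whence $\Ric^{s}=\Ric^{s}_{S}=\Ric^{g_{1/2}}-\tfrac{1}{4}S^{s}$ with $S^{s}(X,Y)=\sum_{i}g_{1/2}(T^{s}(e_{i},X),T^{s}(e_{i},Y))$; here $\Ric^{g_{1/2}}$ is the Riemannian Ricci tensor, i.e. the tensor $\Ric^{1,\frac{1}{2}}$ coming from Theorem~\ref{rricci}. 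This already establishes the two diagonal relations of the statement once $S^{s}$ is identified.

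The identification of $S^{s}$ is then a direct bookkeeping using Lemma~\ref{T}. For $X_{j}\in\fr{m}_{1}$ I would split $S^{s}(X_{j},X_{j})=\sum_{i}B(T^{s}(X_{i},X_{j}),T^{s}(X_{i},X_{j}))+\sum_{k}B(T^{s}(Y_{k},X_{j}),T^{s}(Y_{k},X_{j}))$ and insert $T^{s}(X_{i},X_{j})=(s-1)[X_{i},X_{j}]_{\fr{m}_{2}}$ together with $T^{s}(Y_{k},X_{j})=(s-1)[Y_{k},X_{j}]$, which yields the claimed expression. For $Y_{l}\in\fr{m}_{2}$ the contribution of the $\fr{m}_{2}$-block drops out because $T^{s}(\fr{m}_{2},\fr{m}_{2})=0$, leaving only $(s-1)^{2}\sum_{i}\|[Y_{l},X_{i}]\|^{2}$. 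For the off-diagonal vanishing $\Ric^{s}(X_{i},Y_{l})=0$ I would note that $\Ric^{g_{1/2}}$ has zero cross-block by Theorem~\ref{rricci}(b), while the cross-block of $S^{s}$ also vanishes: by the inclusions~(\ref{incl}) the vectors $T^{s}(X_{j},X_{i})\in\fr{m}_{2}$ and $T^{s}(X_{j},Y_{l})\in\fr{m}_{1}$ are $B$-orthogonal and $T^{s}(Y_{k},Y_{l})=0$ kills the remaining term. Equivalently, $\Ric^{s}$ is an $\Ad(K)$-invariant symmetric form and $\fr{m}_{1}\ncong\fr{m}_{2}$ are inequivalent irreducible $K$-modules, so Schur's lemma forces this block to vanish.

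The only point requiring care, rather than a genuine obstacle, is securing the symmetry of $\Ric^{s}$ through the vanishing co-differential, which is what reduces the curvature computation to the purely algebraic symmetric tensor $S^{s}$; after that the argument is a routine contraction in the chosen frame.
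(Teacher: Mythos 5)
Your proposal is correct, but it reaches the key identity $\Ric^{s}=\Ric^{g_{1/2}}-\tfrac{1}{4}S^{s}$ by a different route than the paper. The paper proves the proposition by specializing its own curvature computation: it sets $t=1/2$ in Corollary \ref{r1r22} to get the explicit diagonal values $\Ric^{s}(X_{j},X_{j})=-\tfrac{s^{2}-2s}{4}\bigl\{\sum_{i}\|[X_{j},X_{i}]_{\fr{m}_{2}}\|^{2}+\sum_{k}\|[X_{j},Y_{k}]\|^{2}\bigr\}+\Cas_{1}$ (and similarly on $\fr{m}_{2}$), then computes $S^{s}$ from Lemma \ref{T} exactly as you do, and finally verifies the claimed decomposition through the elementary identity $-\tfrac{s^{2}-2s}{4}=\tfrac{1}{4}-\tfrac{(s-1)^{2}}{4}$ applied with the $s=1$ case identified as $\Ric^{g_{1/2}}$. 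You instead bypass the curvature computation entirely: you invoke the general splitting $\Ric=\Ric_{S}+\Ric_{A}$ for metric connections with skew-torsion recalled in Section \ref{CST}, kill the antisymmetric part via $\delta^{s}T^{s}=0$ from Lemma \ref{COVAR}, and are then left only with the contraction defining $S^{s}$. Your route is shorter and makes transparent \emph{why} the decomposition holds (symmetry of $\Ric^{s}$ forced by the vanishing co-differential), at the cost of leaning on the general formula from \cite{FrIv, AFer} and on Corollary \ref{NEWTORSION} to ensure $T^{s}$ is genuinely a $3$-form at $t=1/2$; the paper's route stays inside its own Lie-theoretic machinery and simultaneously produces the explicit Casimir expressions that feed directly into Theorem \ref{THM5}. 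Your two arguments for the off-diagonal vanishing (direct orthogonality of the torsion blocks, or Schur's lemma applied to the inequivalent irreducible summands) are both sound and match the paper's Theorem \ref{rricci}(b).
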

\begin{proof}
By Corollary \ref{r1r22}, the Ricci tensor $\Ric^{s}\equiv\Ric^{s, \frac{1}{2}}$ with respect to  $\nabla^{s}$ has the form
 \begin{eqnarray*}
  \Ric^{s}(X_{j},  X_{j})&=&-\displaystyle\frac{(s^{2}-2s)}{4}\Big\{\sum_{i=1}^{d_{1}}\| [X_{j},  X_{i}]_{\fr{m}_{2}}\|^{2}+\sum_{k=1}^{d_{2}}\|[X_{j}, Y_{k}]\|^{2}\Big\}+\Cas_{1},\\
     \Ric^{s}(Y_{l}, Y_{l})&=&-\displaystyle\frac{(s^{2}-2s)}{4}\sum_{i=1}^{d_{1}}\|[Y_{l}, X_{i}]\|^{2}+\Cas_{2}.
 \end{eqnarray*}
Using now the Riemannian Ricci tensor $\Ric^{1, \frac{1}{2}}\equiv\Ric^{1}\equiv \Ric^{g_{1/2}}$ and the definition of the symmetric tensor $S^{s}$, one can obtain the given expressions. For example, let $0\neq X, Y\in\fr{m}_{1}$. Then
\begin{eqnarray*}
S^{s}(X, Y)&:=&\sum_{j=1}^{d_{1}}B\big(T^{s}(X_{j}, X), T^{s}(X_{j}, Y)\big)+\sum_{l=1}^{d_{2}}B\big(T^{s}(Y_{l}, X), T^{s}(Y_{l}, Y)\big)\\
&=&(s-1)^{2}\Big\{\sum_{j}B([X_{j}, X]_{\fr{m}_{2}}, [X_{j}, X]_{\fr{m}_{2}})+\sum_{l}B([Y_{l}, X], [Y_{l}, Y])\Big\}.
\end{eqnarray*}
Similarly, for $X, Y\in\fr{m}_{2}$ we get $S^{s}(X, Y)=\sum_{j}B\big(T^{s}(X_{j}, X), T^{s}(X_{j}, X)\big)=(s-1)^{2}\sum_{j}B([X_{j}, X], [X_{j}, Y])$, and finally for $X\in\fr{m}_{1}, Y\in\fr{m}_{2}$ it is $S^{s}(X, Y)=0=S^{s}(Y, X)$.  
\end{proof}

 \begin{theorem}\label{THM5}
 Let $(M=G/K, \fr{m}_{1}\oplus\fr{m}_{2}, g_{1/2})$ be a compact connected homogeneous Riemannian manifold  with two isotropy summands satisfying  (\ref{incl}). Then, $M=G/K$ is a $\nabla^{s}$-Einstein manifold with skew-torsion $0\neq T^{s}\in\Lambda^{3}(\fr{m})$ for the values $s=0$ or $s=2$,  if and only if,   the Killing metric $g_{B}\equiv g_{1/2}$ is a $G$-invariant Einstein metric, i.e. $\Cas_1=\Cas_2$. 
   \end{theorem}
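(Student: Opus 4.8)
The plan is to read the $\nabla^{s}$-Einstein condition straight off the Ricci formula and to exploit the fact that $s=0$ and $s=2$ are precisely the two roots of $s(s-2)=0$. First I would assemble the structural facts already available. By Corollary \ref{NEWTORSION}, at $t=1/2$ the torsion $T^{s}$ is a non-zero $3$-form for every $s\neq 1$, so the hypothesis $0\neq T^{s}\in\Lambda^{3}(\fr{m})$ is met at both $s=0$ and $s=2$. By Lemma \ref{COVAR} the co-differential $\delta^{s}T^{s}$ vanishes identically, hence the antisymmetric part $\Ric^{s}_{A}=-\tfrac{1}{2}\delta^{s}T^{s}$ is zero and $\Ric^{s}=\Ric^{s}_{S}$. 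Consequently the $\nabla^{s}$-Einstein equation for the Killing metric $g_{1/2}=B|_{\fr{m}\times\fr{m}}$ is simply $\Ric^{s}=\tfrac{\Sca^{s}}{n}\,g_{1/2}$.

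Next I would specialise Corollary \ref{r1r22} to $t=1/2$. Each of its diagonal entries then carries the common factor $-\tfrac{1}{4}(s^{2}-2s)=-\tfrac{1}{4}s(s-2)$ in front of the $\|\cdot\|^{2}$-sums, so for $s=0$ or $s=2$ these ``torsion-squared'' corrections drop out and one is left with $\Ric^{s}(X_{j},X_{j})=\Cas_{1}$, $\Ric^{s}(Y_{l},Y_{l})=\Cas_{2}$, while $\Ric^{s}(X_{i},Y_{l})=0$ holds for all $s$. Since $\fr{m}_{1}$ and $\fr{m}_{2}$ are inequivalent irreducible $K$-modules, Schur's lemma forces every $\Ad(K)$-invariant symmetric bilinear form on $\fr{m}$ to be block-diagonal and scalar on each block; as $\{X_{j}\}\sqcup\{Y_{l}\}$ is $g_{1/2}$-orthonormal (because $g_{1/2}=B$), the tensor $\Ric^{s}$ is proportional to $g_{1/2}$ exactly when its two eigenvalues agree, i.e. $\Cas_{1}=\Cas_{2}$. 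Tracing with $g_{1/2}$ gives $\Sca^{s}=d_{1}\Cas_{1}+d_{2}\Cas_{2}$ (equivalently, set $s=0,2$ in Corollary \ref{scalarcurv}), so whenever $\Cas_{1}=\Cas_{2}$ the common eigenvalue is automatically $\Sca^{s}/n$. This settles both directions of the equivalence $\nabla^{s}\text{-Einstein}\Leftrightarrow\Cas_{1}=\Cas_{2}$ for $s\in\{0,2\}$.

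Finally I would identify $\Cas_{1}=\Cas_{2}$ with the Riemannian Einstein property of $g_{B}=g_{1/2}$, justifying the ``i.e.'' in the statement. For a standard (naturally reductive) homogeneous metric the Riemannian Ricci tensor is $\Ric^{g_{1/2}}=\tfrac{1}{4}B+\tfrac{1}{2}A$ with $A(X,Y)=B(C_{\chi}X,Y)$, and here the Casimir operator splits as $C_{\chi}=\Cas_{1}\Id_{\fr{m}_{1}}\oplus\Cas_{2}\Id_{\fr{m}_{2}}$; hence $g_{B}$ is Einstein precisely when $C_{\chi}$ is a multiple of $\Id_{\fr{m}}$, that is when $\Cas_{1}=\Cas_{2}$.

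I do not expect a genuine obstacle, since the heavy computation is already encapsulated in Corollaries \ref{r1r22} and \ref{scalarcurv} and in Lemma \ref{COVAR}. The only point requiring care is the bookkeeping that $s=0$ and $s=2$, and no other values with $s\neq 1$, are exactly the zeros of $s(s-2)$ that annihilate the torsion corrections to the Casimir eigenvalues, together with the correct application of Schur's lemma to the two inequivalent summands $\fr{m}_{1}$ and $\fr{m}_{2}$.
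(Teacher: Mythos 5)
Your proposal is correct and follows essentially the same route as the paper: both reduce the $\nabla^{s}$-Einstein condition to the single equation $\Ric^{s}(X_{j},X_{j})-\Ric^{s}(Y_{l},Y_{l})=0$ and observe that the factor $s^{2}-2s$ kills the torsion corrections precisely at $s=0,2$, leaving $\Cas_{1}=\Cas_{2}$. The only cosmetic difference is that the paper invokes Wang--Ziller for the equivalence with the Einstein property of $g_{B}$, where you re-derive it via Schur's lemma.
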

   \begin{proof}
   A $\nabla^{s}$-Einstein structure on $(M=G/K, \fr{m}_{1}\oplus\fr{m}_{2}, g_{1/2})$ is given as a solution (with respect to   $s$) of the following system:
\[
\Big\{ \Ric^{s}(X_{j},  X_{j})=\frac{\Sca^{s}}{n}B(X_{j}, X_{j})=\frac{\Sca^{s}}{n}, \ \  \Ric^{s}(Y_{l}, Y_{l})=\frac{\Sca^{s}}{n}B(Y_{l}, Y_{l})=\frac{\Sca^{s}}{n}\Big\}.
\]
This is equivalent  to the equation $ \Ric^{s}(X_{j},  X_{j})-\Ric^{s}(Y_{l}, Y_{l})=0$, namely
\[
 (s^{2}-2s)\Big\{\sum_{i=1}^{d_{1}}\| [X_{j},  X_{i}]_{\fr{m}_{2}}\|^{2}+\sum_{k=1}^{d_{2}}\|[X_{j}, Y_{k}]\|^{2}-\sum_{i=1}^{d_{1}}\|[Y_{l}, X_{i}]\|^{2}\Big\}=4(\Cas_{1}-\Cas_{2}).
\]
Since $\fr{m}=\fr{m}_{1}\oplus\fr{m}_{2}$ is a $B$-orthogonal decomposition of  $\fr{m}=T_{o}G/K$ and both $\fr{m}_{1}, \fr{m}_{2}$ have been assumed to be irreducible and non-equivalent, by   \cite[Thm. 1.11]{Wa1} it is known that  $M=G/K$ is a standard  homogeneous Einstein manifold  if and only if  the Casimir constants coincide $\Cas_{1}=\Cas_{2}$, and our assertion follows.
\end{proof}

 Recall that $T^{s}(X, Y)=(s-1)[X, Y]_{\fr{m}}$, hence the torsion of  the  connections $\nabla^{0}$ and $\nabla^{2}$ are such that $T^{0}=-T^{2}$.  Although $\nabla^{0}T^{0}=0$, Lemma \ref{COVAR} ensures that $\nabla^{2}T^{2}\neq 0$ (in analogy to the isotropy irreducible case).  

 \begin{remark}\textnormal{The $\nabla^{s}$-Einstein condition on $(M=G/K, g_{1/2})$ is  the following   quadratic equation:
 \[
 \fr{c}s^{2}-2\fr{c}s-4(\Cas_1-\Cas_2)=0,
 \]
where $\fr{c}:=(\sum_{i=1}^{d_{1}}\| [X_{j},  X_{i}]_{\fr{m}_{2}}\|^{2}+\sum_{k=1}^{d_{2}}\|[X_{j}, Y_{k}]\|^{2}-\sum_{i=1}^{d_{1}}\|[Y_{l}, X_{i}]\|^{2})$.  The  discriminant  is given by $\Delta=4\fr{c}(\fr{c}+4(\Cas_{1}-\Cas_{2}))$.  Obviously, if  $\fr{c}>0$ and $\Cas_{1}\geq \Cas_{2}$, then $\Delta>0$ and there are two solutions, i.e. two $\nabla^{s}$-Einstein structures, defined by
\[
s=1\pm \sqrt{\frac{\fr{c}+4(\Cas_{1}-\Cas_{2})}{\fr{c}}}.
\]
  The same is true if   $\fr{c}<0$ and $\Cas_{2}\geq \Cas_1$.  Assuming that  the Killing metric $g_{B}$ is Einstein, then we recover the values $s=0, 2$ described in Theorem \ref{THM5}. In general, if $\fr{c}>0$ with $\fr{c}>4(\Cas_{2}-\Cas_{1})$, or $\fr{c}<0$ with $\fr{c}<4(\Cas_{1}-\Cas_{2})$, then $\Delta>0$ and the  $\nabla^{s}$-Einstein structures described above are still available. Hence,   one can theoretically describe    $\nabla^{s}$-Einstein structures which are different than the $\nabla^{s}$-Einstein structures associated to the canonical $(s=0)$ and the anti-canonical connection $(s=2)$. The same time, the case    $\Delta<0$ is still possible. In fact, if $\Cas_{1}\neq \Cas_{2}$, only such examples we are able to construct, i.e.  cosets $(M=G/K, g_{1/2})$ with two isotropy summands satisfying (\ref{incl}), which  do not admit  any $\nabla^{s}$-Einstein structure. Explicit examples of cosets $(M=G/K, g_{1/2})$ carrying $\nabla^{s}$-Einstein structures with $s\neq 0, 2$ are still missing; this  interesting topic  will be addressed in a forthcoming work.} 
 \end{remark}

 \subsection{Examples}
 Let us present now a series of manifolds  that Theorem \ref{THM5} can be applied.  We focus   on flag manifolds  and we prove that several of them carry $\nabla^{s}$-Einstein structures for $s=0, 2$.   Let us a fix a   compact {\it simple} Lie group $G$ and let $M=G/K$ be   a  flag manifold  with two isotropy summands, say $\fr{m}=\fr{m}_{1}\oplus\fr{m}_{2}$.  Such spaces have been classified in terms of painted Dynkin diagrams in \cite{Chry}.  Since both $\fr{m}_{1}$ and $\fr{m}_{2}$ are   irreducible and inequivalent, any $G$-invariant Riemannian metric on $M=G/K$   is a multiple of $g_{t}$.   In  \cite[Thm 1.1]{Chry1} it was shown  that $M$ admits precisely two $G$-invariant Einstein metrics; one of them is K\"ahler and corresponds to the value
$t=1$; the  other one is given by $t=\frac{2d_{2}}{d_{1}+2d_{2}}$, i.e. $g_{\frac{2d_{2}}{d_{1}+2d_{2}}}=B|_{\fr{m}_{1}\times\fr{m}_{1}}+\frac{4d_{2}}{d_{1}+2d_{2}}B|_{\fr{m}_{2}\times\fr{m}_{2}}$. 
\begin{theorem}\label{F}\textnormal{(\cite{Chry1})}
Let $G$ be a compact connected simple Lie group.  A generalized flag manifold $M=G/K$  whose isotropy representation is such that $\fr{m}=\fr{m}_{1}\oplus\fr{m}_{2}$, is a standard homogeneous Einstein manifold  if and only if $d_{1}=2d_{2}$, where $d_{i}=\dim\fr{m}_{i}$ for $i=1, 2$.
\end{theorem}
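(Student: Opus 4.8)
The plan is to reduce the standard Einstein condition to the Casimir identity $\Cas_1=\Cas_2$, exactly as in Theorem \ref{THM5}, and then to translate that identity into the dimensional relation $d_1=2d_2$ by exploiting the bracket relations (\ref{incl}) together with the $\ad$-invariance of $B$. Since $\fr{m}=\fr{m}_1\oplus\fr{m}_2$ is a $B$-orthogonal decomposition into inequivalent irreducible $K$-modules, the standard metric $g_{1/2}=g_B$ is Einstein if and only if $\Cas_1=\Cas_2$ by \cite[Thm.~1.11]{Wa1} (the very criterion invoked in Theorem \ref{THM5}), so it suffices to compute $\Cas_1$ and $\Cas_2$ explicitly.

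Next I would isolate the single nonvanishing structure constant. With $\{X_i\}_{i=1}^{d_1}$ and $\{Y_k\}_{k=1}^{d_2}$ the fixed $B$-orthonormal bases, the relations (\ref{incl}) force every bracket into its ``expected'' summand, so the only surviving triple sum is
\[
[112]:=\sum_{i,j,k}B([X_i,X_j],Y_k)^2 .
\]
Because $B$ is $\ad$-invariant one has $B([X_j,Y_k],X_i)=B([X_i,X_j],Y_k)$, and hence the same number governs the $\fr{m}_1$-into-$\fr{m}_2$ and the $\fr{m}_1$-into-$\fr{m}_1$ contributions. Concretely, the forms $X\mapsto\sum_i\|[X,X_i]_{\fr{m}_2}\|^2$ and $X\mapsto\sum_k\|[X,Y_k]\|^2$ on $\fr{m}_1$, and $Y\mapsto\sum_i\|[Y,X_i]\|^2$ on $\fr{m}_2$, are $\Ad(K)$-invariant, so by Schur's lemma they are scalar on the irreducible modules; summing over the bases then yields, for unit vectors,
\[
\sum_i\|[X,X_i]_{\fr{m}_2}\|^2=\sum_k\|[X,Y_k]\|^2=\frac{[112]}{d_1},\qquad \sum_i\|[Y,X_i]\|^2=\frac{[112]}{d_2}.
\]

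I would then feed these into the identity (\ref{use1}), which in the Killing normalization reads $1=\sum_Z\|[X,Z]_{\fr{m}}\|^2+2\Cas_i$ on a unit vector of $\fr{m}_i$. Since $[\fr{m}_2,\fr{m}_2]\subset\fr{k}$, the $\fr{m}_2$-into-$\fr{m}_2$ term is absent, and the above evaluations give
\[
1-2\Cas_1=\frac{2[112]}{d_1},\qquad 1-2\Cas_2=\frac{[112]}{d_2}.
\]
Therefore $\Cas_1=\Cas_2$ is equivalent to $2[112]/d_1=[112]/d_2$, that is, to $d_1=2d_2$, which closes the argument.

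The main obstacle is exactly the nondegeneracy $[112]\neq 0$, without which the last equivalence would fail. If $[112]$ vanished, then $[\fr{m}_1,\fr{m}_1]\subset\fr{k}$ and $[\fr{m}_1,\fr{m}_2]=0$, so $(\fr{g},\fr{k}\oplus\fr{m}_2)$ would be a symmetric pair with $\fr{m}_1$ entirely decoupled from $\fr{m}_2$; this is incompatible with the T-root structure of a flag manifold having exactly two isotropy summands, as classified in \cite{Chry}. Hence $[112]>0$ throughout, and the equivalence is legitimate. As a consistency check I would note the alternative route through \cite[Thm.~1.1]{Chry1}: the only invariant Einstein metrics occur at $t=1$ and $t=\frac{2d_2}{d_1+2d_2}$, while the Killing metric sits at $t=\frac{1}{2}$, which is one of these precisely when $\frac{2d_2}{d_1+2d_2}=\frac{1}{2}$, again giving $d_1=2d_2$.
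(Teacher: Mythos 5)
The paper does not actually prove Theorem \ref{F}: it is quoted from \cite{Chry1}, and the only argument implicit in the surrounding text is the one you relegate to your final ``consistency check'' --- the two invariant Einstein metrics sit at $t=1$ and $t=\tfrac{2d_2}{d_1+2d_2}$, and the Killing metric $t=\tfrac12$ is one of them precisely when $d_1=2d_2$. Your main argument is therefore a genuinely different, and self-contained, route: you reduce ``standard Einstein'' to $\Cas_1=\Cas_2$ via \cite[Thm.~1.11]{Wa1} (exactly as in Theorem \ref{THM5}), and then compute both Casimir constants from the single structure constant $[112]$ using (\ref{incl}), the $\ad$-invariance of $B$, Schur's lemma on the irreducible summands, and the trace of (\ref{use1}); the resulting identities $1-2\Cas_1=2[112]/d_1$ and $1-2\Cas_2=[112]/d_2$ are correct, and they give the equivalence with $d_1=2d_2$ once $[112]\neq 0$. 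What your approach buys is independence from the full solution of the Einstein equation in \cite{Chry1}; what it costs is the nondegeneracy step, which you handle somewhat informally by appeal to the T-root structure. That step can be closed without citing \cite{Chry}: if $[112]=0$ then $[\fr{m}_1,\fr{m}_2]=0$ and $[\fr{m}_1,\fr{m}_1]\subset\fr{k}$, and one checks from (\ref{incl}) and the Jacobi identity that $\fr{m}_2+[\fr{m}_2,\fr{m}_2]$ is then a nonzero proper ideal of $\fr{g}$, contradicting simplicity; alternatively, for a flag manifold with two summands one has $\fr{m}_2=[\fr{m}_1,\fr{m}_1]_{\fr{m}_2}$ directly. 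With that remark added, your proof is complete and arguably more informative than the citation the paper relies on.
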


\begin{example}
\textnormal{Consider   the complex projective space  $\bb{C}P^{3}=\SO(5)/\U(2)=\Sp(2)/\Sp(1)\times \U(1)$. It is  $d_{1}=4=2d_{2}$, hence  $\bb{C}P^{3}$ is standard Einstein, i.e. $\Cas_{1}=\Cas_{2}$, see  also Example \ref{cp3}.   Thus,  $(\bb{C}P^{3}, g_{1/2})$ admits  exactly two $\nabla^{s, \frac{1}{2}}$-Einstein structures with skew-torsion, namely these which occurs for  $s=0, 2$. 
      The $\nabla^{0, \frac{1}{2}}$-Einstein structure  is   well-known; it is related with the homogeneous nearly-K\"ahler structure that $(\bb{C}P^{3}, g_{1/2})$ admits, in particular the canonical connection $\nabla^{0, \frac{1}{2}}$  coincides with the characteristic connection $\nabla^{c}$ (Gray connection), see \cite{AFer, Srni}. }
      \end{example}

 A quick check of the dimensions of the isotropy summands implies that  there are no exceptional flag manifolds, with $\fr{m}=\fr{m}_{1}\oplus\fr{m}_{2}$, for which the Killing metric can be a $G$-invariant  Einstein metric (see \cite[p.~245] {Chry1}).   However, several examples appear for adjoint orbits corresponding to  the classical Lie groups $B_{\ell}=\SO(2\ell+1)$, $C_{\ell}=\Sp(\ell)$, or $D_{\ell}=\SO(2\ell)$.   
\begin{example}
\textnormal{For the family $B(\ell, p):=\SO(2\ell+1)/\U(p)\times \SO(2(\ell-p)+1)$ $(2\leq p\leq \ell, \  \ell\geq 2)$  we compute $d_{1}=4p(\ell-p)+2p$ and $d_{2}=p(p-1)$. According to Theorem \ref{F}, the  Killing metric $g_{B}$ is Einstein  if and only if $p=2(\ell+1)/3\in\bb{Z}_{+}$.  Hence  we conclude that the manifold  $B(\ell,  2(\ell+1)/3)=\SO(2\ell+1)/U(2(\ell+1)/3)\times \SO(2(\ell-2)/{3}+1)$, with $\ell=2+3k$ and $k=0, 1, 2, 3, \ldots$,   is a $\nabla^{s}$-Einstein manifold for the values $s=0, 2$.   Let us list  the first examples:
 \[
 \begin{tabular}{c|c|c}
  $\ell$ & $p=2(\ell+1)/3 : p\in\bb{Z}_{+}$ & $(M=G/K, \fr{m}_{1}\oplus\fr{m}_{2}, g_{B})$   \\
  \hline
   $\ell=2$ & $p=2$ & $\bb{C}P^{3}=\SO(5)/\U(2)$ \\
   $\ell=5$ & $p=4$ & $\SO(11)/\U(4)\times\SO(3)$ \\
   $\ell=8$ & $p=6$ & $\SO(17)/\U(6)\times\SO(5)$ \\
   $\vdots$ & $\vdots$ & $\vdots$
   \end{tabular}
 \]
}
\end{example}
  \begin{example}
\textnormal{For the space $C(\ell, p):=\Sp(\ell)/\U(p)\times\Sp(\ell-p)$   $(1\leq p\leq \ell-1, \  \ell\geq 2)$  it is  $d_{1}=4p(\ell-p)$ and $d_{2}=p(p+1)$ and  the condition $d_{1}=2d_{2}$ takes the form  $p=(2\ell-1)/3\in\bb{Z}_{+}$.  Thus the   family $C(\ell, (2\ell-1)/3)=\Sp(\ell)/U((2\ell-1)/{3})\times \Sp((\ell+1)/{3})$, with $\ell=2+3k$ and $k=0, 1, 2, 3, \ldots$, is a standard homogeneous Einstein manifold. Moreover,  for $s=0, 2$ it  becomes a $\nabla^{s}$-Einstein manifold with skew-torsion.    \[
 \begin{tabular}{c|c|c}
  $\ell$ & $p=(2\ell-1)/3 : p\in\bb{Z}_{+}$ & $(M=G/K, \fr{m}_{1}\oplus\fr{m}_{2}, g_{B})$  \\
  \hline
   $\ell=2$ & $p=1$ & $\bb{C}P^{3}=\Sp(2)/\U(1)\times\Sp(1)$ \\
   $\ell=5$ & $p=3$ & $\Sp(5)/\U(3)\times\Sp(2)$ \\
   $\ell=8$ & $p=5$ & $\Sp(8)/\U(5)\times\Sp(3)$ \\
   $\vdots$ & $\vdots$ & $\vdots$
   \end{tabular}
 \]
}
\end{example}
 \begin{example}
\textnormal{For the flag manifold  $D(\ell, p):=\SO(2\ell)/\U(p)\times\SO(2(\ell-p))$   $(2\leq p\leq\ell-2,  \ \ell\geq 4)$ it is  $d_{1}=4p(\ell-p)$ and $d_{2}=p(p-1)$. Hence $D(\ell, p)$ is a  standard  homogeneous Einstein manifold  if and only if   $p=(2\ell+1)/3\in\bb{Z}_{+}$. 
It follows that the family $D(\ell, (2\ell+1)/3)=\SO(2\ell)/U((2\ell+1)/{3})\times \SO(2(\ell-1)/{3})$, with $\ell=4+3k$ and $k=0, 1, 2, 3, \ldots$, 
 admits   $\nabla^{s}$-Einstein structures with skew-torsion  for the values $s=0, 2$.  
    \[
 \begin{tabular}{c|c|c}
  $\ell$ & $p=(2\ell+1)/3 : p\in\bb{Z}_{+}$ & $(M=G/K, \fr{m}_{1}\oplus\fr{m}_{2}, g_{B})$    \\
  \hline
   $\ell=4$ & $p=3$ & $\SO(8)/\U(3)\times\SO(2)$ \\
   $\ell=7$ & $p=5$ & $\SO(14)/\U(5)\times\SO(4)$ \\
   $\ell=10$ & $p=7$ & $\SO(20)/\U(7)\times\SO(6)$ \\
   $\vdots$ & $\vdots$ & $\vdots$
   \end{tabular}
 \]
}
\end{example}


\begin{thebibliography}{40}
  \bibitem{Agr03}  I.~Agricola, {\it Connections on naturally reductive spaces, their Dirac operator and homogeneous models in string theory}, Comm. Math. Phys. $\bold{232}$, no. 3,      (2003), 535--563. 
  \bibitem{Srni}   I.~Agricola,  {\it The Srn\'i lectures on non-integrable 
 geometries with torsion}, Arch. Math. $\bold{42}$ (2006), 5--84. With an appendix by M.~Kassuba. 
      \bibitem{AFer} I.~Agricola and A.~C.~Ferreira, {\it Einstein manifolds with skew torsion}, 
 Oxford Quart. J. $\bold{65}$  (2014), 717--741.
    \bibitem{FLAT}  I.~Agricola  and Th.~Friedrich, {\it A note on flat metric connections with antisymmetric torsion}, Diff. Geom. Appl. $\bold{28}$ (2010), 480--487.
  \bibitem{AFH}
 I.~Agricola,  Th.~Friedrich  and J.~H\"oll, {\it $\Sp(3)$-structures on 14-dimensional manifolds},  J. Geom. Phys. $\bold{69}$ (2013), 12--30.
  \bibitem{A}
  D.~V.~Alekseevsky, A.~M.~Vinogradov and V.~V.~Lychagin,  ``Geometry I - Basic
Ideas and Concepts of Differential Geometry'', Encyclopaedia of Mathematical
Sciences,Vol. 28, Springer-Verlag, Berlin, 1991.
     \bibitem{Chry}  
 A.~Arvanitoyeorgos  and I. Chrysikos, 
 {\it Motion of charged particles and homogeneous geodesics in K\"ahler 
 $C$-spaces with two isotropy summands},  
 Tokyo J. Math. $\bold{32}$  (2009), 487--500.  
      \bibitem{Chry1} 
  A.~Arvanitoyeorgos and I.~Chrysikos, 
 {\it Invariant Einstein metrics on generalized flag manifolds with two isotropy summands}, 
 J.  Austral.  Math.  Soc. $\bold{90}$   (2011), 237--251.
 \bibitem{Baum}
H.~Baum,  Th.~Friedrich, R.~Grunewald and I.~Kath, {\it Twistors and Killing spinors on Riemannian manifolds}, Stuttgart etc.: B.~G.~Teubner Verlagsgesellschaft, 1991.
\bibitem{Bes}
 A.~L.~Besse, {\it Einstein Manifolds},  Springer-Verlag, Berlin, 1986.
  \bibitem{Cap}
  A.~\v{C}ap  and J.~Slov\'ak, {\it Parabolic geometries  I: Background and General Theory},  Mathematical Surveys and Monographs, 154.,  A.M.S., RI, 2009.
    \bibitem{DAN}
  J.~E.~D' Atri and H.~N.~Nickerson,  {\it The existence  of special orthonormal frames}, J. Differ. Geom. $\bold{2}$ (1968), 933--946.
  \bibitem{Kerr}
W.~Dickinson  and M.~Kerr,  {\it The geometry of compact homogeneous spaces with two isotropy summands}, 
   Ann. Glob. Anal. Geom. $\bold{34}$ (2008),  329--350.
   \bibitem{Draper}
  C.~A.~Draper,  A.~Garvin and F.~J.~Palomo, {\it Invariant affine connections on odd dimensional spheres}, arXiv: 1503.08401v1.
 \bibitem{FrIv}  Th.~Friedrich and S.~ Ivanov, {\it Parallel spinors and connections with skew-symmetric torsion in string theory},  Asian J. Math. $\bold{6}$ (2002), no. 2, 303--335.  
 \bibitem {FKMS}
  Th.~Friedrich,   I.~ Kath, A.~Moroianu, and U.~Semmelmann, {\it  On nearly parallel $\G_2$-structures},  J. Geom. Phys. $\bold{23}$, 3-4,  (1997),  259--286.
    \bibitem{FS}  Th.~Friedrich and S.~Sulanke, {\it Ein Kriterium f\"ur die formale Selbstadjungiertheit des Dirac-Operators}, Coll. Math. XL, (1979), 239--247.
  \bibitem{GOV} V.~V.~Gortzevich, A.~L.~Onishchik, and E.~B.~Vinberg, {\it Lie Groups and Lie Algebras  III}, Encyclopaedia of Mathematical Sciences,Vol. 20, Springer-Verlag, Berlin, 1993. 
 \bibitem{KS}  M.~Kerin  and K.~Shankar, {\it Riemannian submersions from simple, compact Lie groups}, M\"unster J. of Math. $\bold{5}$ (2012), 25--40.
 \bibitem{Ke}   M.~Kerr,   {\it Some new homogeneous Einstein metrics on symmetric spaces}, Trans. Amer. Math. Soc.  $\bold{348}$ (1996),  153--171. 
    \bibitem{Ke2} M.~Kerr, {\it New examples of of homogeneous Einstein metrics}, Michigan J. Math. $\bold{45}$ (1998), 115--134.
      \bibitem{Kob2}   S.~Kobayashi and K.~Nomizu, {\it Foundations of Differential Geometry,  Vol II}, Wiley - Interscience, New York, 1969.  
\bibitem{Laq1} 
  H.~T.~Laquer, {\it Invariant affine connections on Lie groups}, Trans. Am. Math. Soc.  $\bold{331}$, (2), (1992), 541--551.
  \bibitem{Laq2} 
H.~T.~Laquer, {\it Invariant affine connections on symmetric spaces}, Proc. Am. Math. Soc. $\bold{115}$, (2), (1992), 447--454.
        \bibitem{Nagy}  P-A.~Nagy, {\it Skew-symmetric prolongations of Lie algebras and applications}, Journal of Lie Theory, $\bold{23}$ (2013),  (1), 1--33. 
    \bibitem{Olmos}
      C.~Olmos,   and S.~Reggiani, {\it  The skew-torsion holonomy theorem and naturally reductive spaces}, J. Reine Angew. Math. $\bold{664}$ (2012), 29--53. 
       \bibitem{Pfa}  F.~Pf\"affle  and C.~Stephan, {\it On gravity, torsion and the spectral action principle}, J. Funct. Anal. $\bold{262}$ (2012), 1529--1565.
       \bibitem{Reg2}   S.~Reggiani,  {\it On the affine group of a normal homogeneous manifold}, Ann. Glob. Anal. Geom. $\bold{37}$  (2010),  351--359.
     \bibitem{W}  H.~C.~Wang,   {\it On invariant connections over a principal fibre bundle}, Nagoya Math. J. $\bold{13}$ (1958), 1--19. 
      \bibitem{Wa1}  M.~Y.~Wang and W.~Ziller,  {\it On normal homogeneous Einstein manifolds},  Ann. Scient. $\acute{E}$c. Norm. Sup.   $\bold{18}$, (4), (1985),  563--633.
       \bibitem{Wa4}  M.~Y.~Wang  and W.~Ziller,   
  {\it On isotropy irreducible Riemannian manifolds}, Acta. Math.  $\bold{166}$  (1991), 223--261. 
    \bibitem{Wolf}  J.~A.~Wolf,   {\it The geometry and the structure of isotropy irreducible homogeneous spaces},  Acta. Math. $\bold{120}$ (1968) 59--148; correction: Acta Math. $\bold{152}$ (1984), 141--142. 
      \bibitem{Wolf2}  J.~A.~Wolf,   {\it Essential self adjointness for the Dirac operator and its square}, Indiana Univ. Math. J. $\bold{22}$ (1973), 611--640.
\end{thebibliography}
\end{document}